\tikzset{>={Latex[width=3mm,length=2mm]}}
\newcommand\Label[1]{&\refstepcounter{equation}(\theequation)\ltx@label{#1}&}
\theoremstyle{plain}
\newtheorem{thm}{Theorem}[section]
\newtheorem{lem}[thm]{Lemma}
\theoremstyle{definition}
\newtheorem{definition}[thm]{Definition}
\newtheorem{exmp}[thm]{Example}
\theoremstyle{remark}
\newtheorem{rem}[thm]{Remark}
\providecommand{\customgenericname}{}
\newcommand{\newcustomtheorem}[2]{%
  \newenvironment{#1}[1]
  {%
   \renewcommand\customgenericname{#2}%
   \renewcommand\theinnercustomgeneric{##1}%
   \innercustomgeneric \it
  }
  {\endinnercustomgeneric} 
}
\newcommand{\modelm}{\mathcal{M}}
\newcommand{\powerset}{\mathscr{P}}
\newcommand{\lang}{\mathcal{L}}
\newcommand{\set}[1]{\{ #1 \}}
\newcommand{\PAT}{\mathsf{PA}^2\xspace}
\newcommand{\PAO}{\mathsf{PA}^1\xspace}
\newcommand{\TAO}{\mathsf{TA}^1\xspace}
\newcommand{\TAT}{\mathsf{TA}^2\xspace}
\newcommand{\Q}{\mathsf{Q}\xspace}
\newcommand{\HP}{\ensuremath{\mathsf{HP}^2}\xspace}
\newcommand{\ACA}{\mathsf{ACA}_0\xspace}
\newcommand{\PICA}{\Pi^1_1\mbox{-}\mathsf{CA}_0\xspace}
\newcommand{\PICAk}[1]{\Pi^1_{#1}\mbox{-}\mathsf{CA}_0\xspace}
\newcommand{\PI}{\mathsf{I_{PI}}\xspace}
\newcommand{\EPI}{\mathsf{E_{PI}}\xspace}
\newcommand{\nn}{\mathbb{N}}
\newcommand{\momodels}[2]{\modelm,#1\vDash}
\title{The Potential in Frege's Theorem}
\author{Will Stafford}
\address{Department of Logic and Philosophy of Science\\ University of California, Irvine\\ Irvine, 92617 CA, USA}
\email{will.stafford@uci.edu}
\thanks{I would like to thank the audience at the Logic Colloquium 2016 in Leeds and the UCI Logic Seminar 2017 for their questions and comments, and Tim Button, Jeremy Heis, Richard Mendelsohn, Stella Moon, Sean Walsh, and Kai Wehmeier for their helpful feedback.}
\begin{document}
\maketitle
\begin{abstract}
    Is a logicist bound to the claim that as a matter of analytic truth there is an actual infinity of objects? If Hume's Principle is analytic then in the standard setting the answer appears to be yes. Hodes's work pointed to a way out by offering a modal picture in which only a potential infinity was posited. However, this project was abandoned due to apparent failures of cross-world predication.  We re-explore this idea and discover that in the setting of the potential infinite one can interpret first-order Peano arithmetic, but not second-order Peano arithmetic.  We conclude that in order for the logicist to weaken the metaphysically loaded claim of necessary actual infinities, they must also weaken the mathematics they recover.
\end{abstract}
\tableofcontents
\section{Introduction}\label{sec:intro}
\subsection{Potentially Infinite Models}
\noindent  In the non-modal setting, Frege (\citeyear{Frege1903}; \cite{heck1993}) essentially proved that second-order Peano arithmetic, $\PAT$, is interpretable in the theory \HP, which consists of the Second-order Comprehension Schema and Hume's Principle:
\begin{equation}\label{hp}\tag{HP}
\forall X,Y(\#X=\#Y\Leftrightarrow\exists \ \text{\textit{bijection}} \ f:X\rightarrow Y).
\end{equation}

\noindent Hume's Principle characterises the cardinality operator $\#$, read `the number of' or `octothorpe', as a type-lowering function that takes equinumerous second-order objects to the same first-order object.   This definition can be motivated in the finite case by examples such as checking one has the same number of knives and forks by setting them out in pairs. Formally, Frege's result is:

\begin{thm}[Frege's Theorem]\label{ft}
There is a translation from the language of $\PAT$ to the language of \HP that interprets $\PAT$ in $\HP$.
\end{thm}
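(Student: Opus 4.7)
The plan is to define a translation $\tau$ from the language of $\PAT$ into the language of $\HP$ using Frege's now-standard interpretation of the arithmetical primitives in terms of $\#$. We set $\tau(0) := \#[x : x\neq x]$, the number of the empty concept. We define a successor relation $S(m,n)$ to hold iff there is a concept $F$ and an element $y$ with $n=\#F$, $Fy$, and $m = \#[x : Fx \wedge x\neq y]$. Using second-order quantification we form the weak ancestral $S^{*}$ of $S$, saying $S^{*}(a,b)$ iff every concept containing $a$ and closed under $S$ contains $b$. The translation of ``$n$ is a natural number'' is then $S^{*}(0,n)$, and $\tau$ relativises all first-order quantifiers of $\PAT$ to this predicate. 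For the second-order machinery, concept variables are left alone and the comprehension schema of $\HP$ supplies whatever instances are needed.

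The bulk of the work is then to verify that $\HP$ proves the $\tau$-translation of each Peano axiom. Several of these are short consequences of $\HP$ itself. That $0$ is a natural number is immediate from the definition of $S^{*}$. That successor is functional is a direct application of $\HP$: if $S(m_1,n)$ and $S(m_2,n)$ via witnesses $(F_1,y_1)$ and $(F_2,y_2)$, then $F_1\setminus\{y_1\}$ and $F_2\setminus\{y_2\}$ are equinumerous, so $m_1=m_2$. That $0$ is not a successor follows because any $n$ in the range of $S$ has a concept $F$ with an element, so by $\HP$ its cardinality cannot equal $\#[x : x\neq x]$. Injectivity of successor on naturals, analogously, reduces to an equinumerosity argument together with the fact that naturals are nonempty as targets of $S$.

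The genuine obstacle, as always in Frege's theorem, is showing that every natural number has a successor. Frege's elegant device is to prove by induction (on $S^{*}$) that $\#[x : S^{*}(x,n)]$ is itself a successor of $n$, i.e.\ to take the cardinal of the class of predecessors-or-equals of $n$. The delicate prerequisite is the ``no loops'' lemma: no natural number bears $S$ to itself, and more generally the weak ancestral on naturals is antisymmetric. This requires an inductive argument using $S^{*}$-closed concepts witnessing the finiteness of the predecessor class, and it is here that the comprehension component of $\HP$ does real work (to form the relevant concepts).

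Finally, the $\tau$-translation of the second-order induction axiom unwinds almost by definition: if $X$ contains $\tau(0)$ and is closed under $S$, then by the definition of $S^{*}(0,n)$ every natural number lies in $X$. Putting all of the above together yields a proof in $\HP$ of $\tau(\varphi)$ for each axiom $\varphi$ of $\PAT$, which is exactly the claim that $\tau$ interprets $\PAT$ in $\HP$.
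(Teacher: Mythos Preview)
Your sketch is the standard proof of Frege's Theorem and is essentially correct, but note that the paper does not itself prove Theorem~\ref{ft}: it is stated as a classical result with citations to Frege and Heck, and the paper's own contribution lies in the modal analogue developed in Sections~\ref{sec:lang}--\ref{sec:interpertation}. Those sections do parallel your outline closely (Definitions~\ref{suc}, \ref{sances}--\ref{nn} are exactly your $S$, ancestral, and natural-number predicate, transplanted to the modal setting), so in that indirect sense your approach matches the paper's.

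Two small points on your write-up. First, you have swapped the labels ``functional'' and ``injective'': the argument you give under ``successor is functional'' (from $S(m_1,n)$ and $S(m_2,n)$ conclude $m_1=m_2$) is injectivity, while functionality is the converse direction $S(m,n_1)\wedge S(m,n_2)\rightarrow n_1=n_2$. Both are needed and both follow from the same kind of equinumerosity manipulation, so the mathematics is fine; only the names are transposed. Second, you do not say how $+$ and $\times$ are translated. In the second-order setting this is harmless, since once $0$, $S$, and full induction are available one recovers addition and multiplication by the Dedekind recursion theorem; but strictly your translation $\tau$ should either supply explicit definitions (as the paper does in Definitions~\ref{addit} and~\ref{multip}) or invoke that recursion explicitly.
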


\noindent The formal definition of the theories mentioned here can be found in Appendix~\ref{ap:theories}. Frege's Theorem has traditionally been regarded as philosophically important because it is supposed to show that we can derive all arithmetical theorems from an epistemically innocent system. This requires that Hume's Principle is analytic.  However, on the usual semantics, Hume's Principle is only true on domains with at least a countable infinity of objects.  This commits logicists like Frege to the analytic existence of an actual infinity of objects (\cite[pp. 199, 213, 233]{Boolos1998a}; \cite[pp. 20, 292, 309]{Hale2001}; \cite[p.~7]{Cook2007}).  

A commitment to a \emph{potential} infinity, in contrast, isn't a commitment to how many things there actually are, just how many are possible.  This is a much safer area in which to make analytic claims.  
Here we show that some but not all of the mathematics of the actual infinite is recoverable in the setting of the potential infinite.  And so, to avoid problematic ontological commitments the logicist must also weaken the mathematics they recover.  

To do this we must decide how to represent Hume's Principle.  Below we will define `the number of' operator $\#$ in a semantic manner.  However, we are convinced that this is simply a convenience and we can think of our models as defining $\#$ as satisfying Hume's Principle with the additional criteria that this function is rigid across worlds.  An axiomatization would consist of the following modification of Hume's Principle: 
$$ \Box\forall X,Y(\#X=\#Y\Leftrightarrow\exists \ \text{\textit{bijection}} \ f:X\rightarrow Y), $$

\noindent plus a principle to rigidify the $\#$ operator.  This would require working in a hybrid modal logic where worlds could be saved and recalled such as \textcite[370]{Williamson2013-df}.\footnote{For those familiar with hybrid systems the axioms needed is $\uparrow \Box \forall \; X,y \; \downarrow [ \#X=y \rightarrow \Box \#X=y].$ However, this will not play a role in what follows.} However, we leave the details of this approach for future work.  As the modification is so minimal, the move to the potentially infinite doesn't undermine the justifications offered for Hume's Principle.  The syntactic priority thesis can still be argued for as we can identify the behaviour of terms in a modal setting as well as in a non modal setting.  Similarly if we think that abstraction principles offer implicit definitions then this justification works as well in the modal setting.

The rigidity of the octothorpe is important for the success of the project here.  However, by assuming that it is rigid we are presuming that `the number of' operator is rigid.  Whether this is the case in natural language is an empirical question (e.g.\@ \cite{Stanley1997-ub}).  We do not address this issue here, but two things are worth noting.  First the question of the rigidity of `the number of' is not the same question as e.g.\@ whether the number of planets varies between worlds.  This is because we do not apply the operator to predicates but rather to sets which do not vary their membership across worlds.  The second is that this setting does rule out the possibility of multiple different number structures in the different worlds, e.g.\@ the numbers being von Neumann ordinals in one world and Zermelo ordinals in another.   This means that a certain kind of referential indeterminacy which has a prominent place in philosophy of mathematics cannot be addressed in this setting as we have presumed against it (\cite{Benacerraf1965-pa}; \cite[ch.~2]{Button2018-in}).

To set up our result, we define a set of second-order Kripke models, which we will call \textit{potentially infinite models}. This idea comes from \textcite[379]{Hodes1990}, although he does not place exactly these constraints on the accessibility relation.  We want the models to be nearly linear sequences of worlds (if there are two worlds neither of which accesses the other, there is a third world they both access), where later worlds are possible from the perspective of earlier worlds but not the other way around.  Each of these worlds should contain only a finite number of objects as we are assuming actual infinities are impossible, and the number of objects should increase from one world to the next.  Each world will have its own second-order domain, which as the worlds are finite, will be the full powerset.  The octothorpe will implement Hume's Principle by taking sets of the same cardinality to a unique object and this object will not change from one world to the next. We define the models formally as follows: 

\begin{definition}\label{jo}
A \emph{potentially infinite (PI) model} is a quadruple $\mathcal{M}=\langle W,R,D,I \rangle$ in the modal signature with second-order quantification and with $\#$ and $\mathbf{a}$ as the only non-logical symbols, such that the following conditions are met:
    \begin{enumerate}[leftmargin=1cm]
        \item[\ref{jo}.1.]{ $W$ is countably infinite and $R$ is a directed partial order,\footnote{An order $R$ is directed if for all $w,s\in W$ there exists an $t\in W$ such that $R(w,t)$ and $R(s,t)$.}} 
                
        \item[\ref{jo}.2.]{ the first-order domain of $w$, written $D(w)$, is non-empty and finite for all $w\in W$, 
        
        \item[\ref{jo}.3.] for each $n\geq 1$, the range of the second-order $n$-ary relational quantifiers at $w$ is $\powerset(D(w)^n)$ consisting of all subsets of the $n$-th Cartesian power $(D(w))^n$ of $D(w)$,}
            
        \item[\ref{jo}.4.] {if $w,s\in W$ such that $R(w,s)$ and $w\neq s$, then $D(w)\subsetneq D(s),$ }
                
        \item[\ref{jo}.5.] the function $\mathbf{a}:\omega\rightarrow D$ (where $D$ is $\bigcup_{w\in W}D(w)$) assigns to each number $n$ a distinct element $\mathbf{a}_n$ in one of the first-order domains, and for all $w\in W$,  the cardinality of $X$ is $n$ if and only if $\#X=\mathbf{a}_n$ at $w$.  More formally, for $\#$ and all $w$ the interpretation function is defined as follows:  $I(\#,w)=\set{\langle X,\mathbf{a}_{|X|}\rangle\mid\exists s\in W \; X\in\powerset (D(s))}$.
    \end{enumerate}
\end{definition}

\begin{rem} Three brief remarks on this definition:

First, conditions \ref{jo}.1-4 define a PI model as a directed partial order of ever-increasing finite domains.  This means that if we have several objects existing in different possible worlds we can always move to a world where they all exist. 

Second, condition \ref{jo}.5 defines the cardinality operator $\#$ using metatheoretic cardinality $|X|$.  It is sufficient for Hume's Principle to hold that $\#$ picks-out cardinality, and so condition \ref{jo}.5 ensures that all potentially infinite models are models of Hume's Principle. One reason we need $\powerset(D(w)^2)$ from \ref{jo}.3 is because the quantifier over graphs of functions in Hume's Principle ranges over this set.  

Third, condition \ref{jo}.5 also ensures that the interpretation of the octothorpe is rigid.  That is, the octothorpe is interpreted as the same relation at every world.  Because of this nothing will be lost if we write $\#X=x$ and don't specify the world of evaluation.  In fact, while we define $\#X$ using the $\mathbf{a}_i$'s, we could have instead simply defined it as rigid and satisfying Hume's Principle and this along with directedness would ensure the $\mathbf{a}_i$'s exist.
\end{rem}

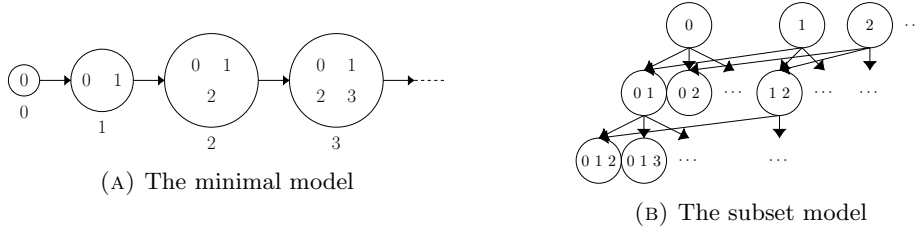
\begin{figure}
\centering
    \begin{subfigure}{.49\textwidth}
    \centering
    \resizebox{.85\textwidth}{!}{%
        \begin{tikzpicture}
    \draw (0,1) circle [radius=0.5];;
    \draw (2.5,1) circle [radius=1];;
    \draw (6,1) circle [radius=1.5];;
    \draw (10,1) circle [radius=1.5];;
    
    \draw [->] (0.5,1) -- (1.5,1);
    \draw [->] (3.5,1) -- (4.5,1);
    \draw [->] (7.5,1) -- (8.5,1);
    \draw [->] (11.5,1) -- (12.5,1);
    \draw [dashed] (12.5,1) -- (13.5,1);
    
    \node at (0,0) {\huge 0};
    \node at (2.5,-0.5) {\huge 1};
    \node at (6,-1) {\huge 2};
    \node at (10,-1) {\huge 3};
    
    \node at (0,1) {\huge 0};
    
    \node at (2,1) {\huge 0};
    \node at (3,1) {\huge 1};
    
    \node at (5.5,1.5) {\huge 0};
    \node at (6.5,1.5) {\huge 1};
    \node at (6,0.5) {\huge 2};
    
    \node at (9.5,1.5) {\huge 0};
    \node at (10.5,1.5) {\huge 1};
    \node at (9.5,0.5) {\huge 2};
    \node at (10.5,0.5) {\huge 3};
    \end{tikzpicture}}
    \caption{The minimal model}
    \label{fig:minimal}
    \end{subfigure}
    \begin{subfigure}{.49\textwidth}
    \centering
    \resizebox{.7\textwidth}{!}{
        \begin{tikzpicture}
    \draw (0,1) circle [radius=0.5];;
    \draw (2.5,1) circle [radius=0.5];;
    \draw (4,1) circle [radius=0.5];;
    
    \node at (0,1) {0};
    \node at (2.5,1) {1};
    \node at (4,1) {2};
    \node at (5,1) {$\cdots$};
    
    \draw [->] (0,0.5) -- (-1,0);
    \draw [->] (0,0.5) -- (1,0);
    \draw [->] (0,0.5) -- (0,0);
    \draw [->] (2.5,0.5) -- (-1,0);
    \draw [->] (2.5,0.5) -- (2,0);
    \draw [->] (2.5,0.5) -- (3,0);
    \draw [->] (4,0.5) -- (0,0);
    \draw [->] (4,0.5) -- (2,0);
    \draw [->] (4,0.5) -- (4,0);
    
    \draw (-1,-0.5) circle [radius=0.5];;
    \draw (0,-0.5) circle [radius=0.5];;
    \draw (2,-0.5) circle [radius=0.5];;
   
    \node at (-1,-0.5) {0 1};
    \node at (1,-0.5) {$\cdots$};
    \node at (0,-0.5) {0 2};
    \node at (2,-0.5) {1 2};
    \node at (3,-0.5) {$\cdots$};
    \node at (4,-0.5) {$\cdots$};
    
    \draw [->] (-1,-1) -- (-2,-1.5);
    \draw [->] (-1,-1) -- (0,-1.5);
    \draw [->] (-1,-1) -- (-1,-1.5);
    
    \draw (-2,-2) circle [radius=0.5];;
    \draw (-1,-2) circle [radius=0.5];;
    
    \node at (-2,-2) {0 1 2};
    \node at (0,-2) {$\cdots$};
    \node at (-1,-2) {0 1 3};
    
    \draw [->] (2,-1) -- (2,-1.5);
    \draw [->] (2,-1) -- (-2,-1.5);
    
    \node at (2,-2) {$\cdots$};
  
    \end{tikzpicture}}
    \caption{The subset model}
    \label{fig:subset}
    \end{subfigure}
    \caption{Examples of potentially infinite models}
\end{figure}

\noindent This definition can obscure the simplicity of the idea here, as such it helps to give several examples.  The simplest potentially infinite model we can construct is the following:

\begin{exmp}\label{exmp:min}
The \emph{minimal} potentially infinite model is $(\omega,\leq, D,I)$ where $D(\mathbf{n})=\set{\mathbf{0},\dots,\mathbf{n}}$ and the interpretation function $I$ interprets octothorpe as cardinality in the metalanguage.\footnote{I will use bold face numbers for the numbers in the metalanguage.} That is, $I(\#,w)(X)$ $=\mathbf{n}$ if and only if $|X|=\mathbf{n}$.  The minimal model is illustrated in Figure~\ref{fig:minimal}.  When working with such a model we see that a number can be missing from a world {\it even} if a set of that cardinality is present.  So $I(\#,\mathbf{1})(\set{\mathbf{0}})=\mathbf{1}$ and $\mathbf{1}\in D(\mathbf{1})$, but $I(\#,\mathbf{1})(\set{\mathbf{0},\mathbf{1}})=\mathbf{2}$ and $\mathbf{2}\notin D(\mathbf{1})$ even though $\set{\mathbf{0},\mathbf{1}}\subseteq D(\mathbf{1})$.
\end{exmp}

\noindent A less simple but similarly elementary model makes use of the non-empty finite subsets of the natural numbers.  This model helps illustrate a non-linear $R$ relation:

\begin{exmp}
Let the \emph{subset} model be $(\powerset(\omega)^{<\omega}-\set{\varnothing},\subseteq, D,I)$ where $D(X)=X$ and again the octothorpe is cardinality.  The subset model is illustrated in Figure~\ref{fig:subset}.  Note that if we have worlds $X_0,\dots,X_n$ we can always find an accessible world whose domain is $\bigcup^n_{i=0}X_i$.  For example, $\set{\mathbf{0},\mathbf{1}},\set{\mathbf{3}},\set{\mathbf{100},\dots,\mathbf{200}}$ are all finite subsets of the natural numbers, none of which access each other, however, their union $\set{\mathbf{0},\mathbf{1},\mathbf{3},\mathbf{100},\dots,\mathbf{200}}$ is also a world, which they all access.
\end{exmp}

\noindent It is easy to generate unintended models from these two cases.  Using the minimal model, for example, we can define the \textbf{3}-\textbf{0} swap model:

\begin{exmp}
The \emph{\textbf{3}-\textbf{0} swap} model takes \textbf{0} and \textbf{3} in the domain of the minimal model and switches them around.  So $D(\mathbf{0})=\set{\mathbf{3}},$ $D(\mathbf{1})=\set{\mathbf{3},\mathbf{1}},$ $D(\mathbf{2})=\set{\mathbf{3},\mathbf{1},\mathbf{2}},$ $D(\mathbf{3})=\set{\mathbf{3},\mathbf{1},\mathbf{2},\mathbf{0}}$ and then for all $\mathbf{n}\geq \mathbf{3}$, we have that $D(\mathbf{n})$ exactly as it is in the minimal model. 
\end{exmp}

\noindent These models should help illustrate the intuition behind the potentially infinite models.  They will also be helpful when we need counterexamples to claims later in the paper.

We can now define satisfaction for potentially infinite models using a standard semantics for quantified modal logic, such as in \textcite{Fitting1998}. Three things to note first: (1) Our quantifiers are actualist, but free variables may be assigned to objects in any world. (2) Set variables are interpreted rigidly across worlds.  That is the membership of a set doesn't change depending on the world.  (3) To simplify the notation, instead of variable assignments, we work as though we had a rigid name for every object in the models.   Recall that $\momodels{w}{g}\varphi$  means that given any replacement of free variables with the added constants we evaluate $\varphi$ as true in $\mathcal{M}$ at world $w$.   With this in place, the notion of potentially infinite models induces a natural validity relation, which we define as follows:

\begin{definition}\label{pivalid}
We say that $\varphi$ is \textit{true in all potentially infinite models}, or $\vDash_{\mathsf{PI}}\varphi$, if for all potentially infinite models $\mathcal{M}$ and worlds $w\in W$  we have $\momodels{w}{g}\varphi$.   We define $\varphi\vDash_{\mathsf{PI}}\psi$ as for all models $\mathcal{M}$ and worlds $w\in W$, if $\momodels{w}{g}\varphi$ then $\momodels{w}{g}\psi$.  
\end{definition}

\noindent The consequence relation here is defined locally rather than globally (\cite[21]{Fitting1998}).  This is because the deduction theorem holds for the local consequence relation but not the global one (\cite[23]{Fitting1998}).

\subsection{Main Results}\label{sec:mainresults}

\noindent We will now state our two main results which together show that we can interpret the first-order theories of first-order Peano arithmetic $\PAO$ and first-order true arithmetic $\TAO$, but not the second-order theories of second-order Peano arithmetic $\PAT$ and second-order true arithmetic $\TAT$, in theories defined in terms of potentially infinite models.  A deductive theory for second-order modal logic with rigid operators would be unwieldy and the complications caused by 
it would be likely to obscure the insights provided by the Kripke semantics. Hence, we leave development of a deductive theory for future work. We can define a theory just in terms of the potentially infinite models.  This theory will be stronger than anything we could produce deductively because it does not admit nonstandard models of the natural numbers.  Because of this we will call it the external theory of the potentially infinite or ${\EPI}$:
\begin{equation}\label{equ:etpi}
    \EPI=\set{\varphi\mid\; \vDash_{\mathsf{PI}}\varphi}.
\end{equation}

To capture something closer to what can be deduced from the models we need to use the model-theoretic validity relation defined above, relativised to a weak metatheory. The theory $\ACA$ is a subsystem of $\PAT$ which only has comprehension for first-order formulas.  More information about this theory can be found in Appendix~\ref{ap:theories}. Since we can code finite sets of natural numbers as natural numbers in $\ACA$, we can define the property of being a potentially infinite  model in this theory, along with the associated validity notion $\vDash_{\mathsf{PI}}$.  This gives us the internal theory of the potentially infinite or $\PI$:
\begin{equation}\label{equ:tpi}
    \PI=\set{\varphi\mid \ACA\vdash\text{`$\vDash_{\mathsf{PI}}\varphi$'}}.
\end{equation} 

\noindent Intuitively, this theory is every formula that can be proven valid on potentially infinite models, given the weakest metatheory that can formalise the models. A full definition is given in Appendix~\ref{ap:piintoapa0}.\footnote{We picked the weakest theory because we are interested in what is deducible from PI models and if we strengthen the metatheory $\PI$ will be strengthened in ways that reflect what the metatheory thinks about finite sets (which can code consistency statements).}  The definition of interpretation is traditionally restricted to theories in the same logic, whereas in this setting $\EPI$ and $\PI$ are theories in second-order modal logic but $\PAO$, $\PAT$, $\TAO$, and $\TAT$ aren't modal theories. So, to state and prove our main results we need a more general notion of \textit{generalised translation} and \textit{interpretation} which captures those interpretations which involve not just different theories but different logics.  This is defined in section~\ref{sec:interpertation}. Our first main result is:

\begin{thm}\label{thm:main} \vspace{0cm}
\begin{enumerate}
    \item[(i)] There is a generalised translation from the language of $\PAO$ to the second-order modal language with octothorpe that interprets $\TAO$ in $\EPI$.
    \item[(ii)] There is a generalised translation from the language of $\PAO$ to the second-order modal language with octothorpe that interprets $\PAO$ in $\PI$. Further, this is a $\PAO$-verifiable generalised interpretation.
\end{enumerate}
\end{thm}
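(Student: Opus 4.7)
The plan is to first define a translation $*$ from the language of $\PAO$ to the second-order modal language with octothorpe and then verify the two interpretation claims. Set the natural-number predicate $N(x) := \Diamond\exists X(\#X = x)$; because condition~\ref{jo}.5 forces $\#X$ to be some $\mathbf{a}_n$ and conditions~\ref{jo}.1 and \ref{jo}.4 together supply an accessible world containing a set of any desired finite size, $N(x)$ holds at any world precisely when $x$ is one of the $\mathbf{a}_n$. Translate the numeral $0$ as $\#\varnothing$, and define $\mathrm{Succ}(x,y) := \Diamond\exists X\exists u(u\in X \wedge \#X = y \wedge \#(X\setminus\{u\}) = x)$ together with analogous Frege-style $\Diamond$-guarded formulas for $\mathrm{Add}$ and $\mathrm{Mult}$ (using the second-order ancestral where needed). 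Translate first-order quantifiers as $\forall x \leadsto \Box\forall x(N(x)\to\cdot)$ and $\exists x \leadsto \Diamond\exists x(N(x)\wedge\cdot)$, and unfold complex terms via an auxiliary predicate $\mathrm{val}(t,z)$ saying ``$z$ is the value of $t$''.

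For part (i), the core step is to prove by structural induction on first-order $\varphi(x_1,\dots,x_k)$ the correspondence lemma: for every PI model $\mathcal{M}$, every $w\in W$, and every $n_1,\dots,n_k \in \omega$,
\[
\mathbb{N}\vDash\varphi(n_1,\dots,n_k) \iff \mathcal{M},w\vDash \varphi^*(\mathbf{a}_{n_1},\dots,\mathbf{a}_{n_k}).
\]
Atomic cases follow from the design of the $\Diamond$-guarded defining formulas together with the rigidity of $\#$; Boolean cases are routine; for the quantifier cases, directedness together with the strict growth of domains ensure that the guarded modal quantifiers range in effect over the whole of $\{\mathbf{a}_n : n\in\omega\}$, independently of the starting world $w$, so the inductive hypothesis can be applied at whichever witness world is needed. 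Applying the lemma to $\varphi\in\TAO$ immediately yields $\varphi^*\in\EPI$.

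For part (ii), the above argument must be formalised inside $\ACA$. PI models are arithmetically codable (worlds as naturals, each finite first-order domain as a coded finite set, and the functions $\#$ and $\mathbf{a}$ by their graphs), and since each world is finite, satisfaction of a second-order modal formula in such a model is an arithmetical relation, so $\ACA$ can define $\vDash_{\mathsf{PI}}$. The translations of the Robinson-style axioms follow directly from the correctness of the Frege-style defining formulas. The main obstacle is the induction schema: for each first-order arithmetic formula $\varphi$ one needs $\ACA \vdash \text{`}\vDash_{\mathsf{PI}}(\mathrm{ind}_\varphi)^*\text{'}$. I plan to handle this by arithmetical meta-induction inside $\ACA$ on the index $n$: given a coded $(\mathcal{M},w)$, the predicate ``$\mathcal{M},w\vDash\varphi^*(\mathbf{a}_n)$'' is arithmetical in $n$ and in the code of $(\mathcal{M},w)$, so $\ACA$'s arithmetical induction schema applies to close the step. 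Because every step of the construction is arithmetical and $\ACA$ is conservative over $\PAO$ for arithmetic sentences, the whole interpretation is $\PAO$-verifiable.
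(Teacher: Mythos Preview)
Your approach is broadly correct but takes a genuinely different route from the paper, and one point in part~(ii) is under-argued.

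\textbf{Differences from the paper.} First, your domain predicate $N(x)\equiv\Diamond\exists X(\#X=x)$ is not the paper's. The paper uses Frege's ancestral $\nn(x)\equiv S^{+=}(0,x)\wedge\exists y(y=0)$, which is \emph{world-dependent}: at $w$ it picks out only the initial segment $\{\mathbf a_0,\dots,\mathbf a_{m-1}\}$ where $m$ is least with $\mathbf a_m\notin D(w)$. Your $N$ picks out all $\mathbf a_n$'s regardless of world. Under the $\Box\forall$/$\Diamond\exists$ quantifier prefix the two end up ranging over the same objects (each $\mathbf a_n$ eventually appears in an accessible world either way), so this difference is harmless for the interpretation claim, but it does change the shape of the induction argument. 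Second, and more substantively, the paper does \emph{not} prove a two-way correspondence lemma. It instead (a) verifies each modalized $\mathsf Q$ axiom directly on PI models, (b) isolates a syntactic class of ``inductive formulas'' (the range of the translation), proves they are \emph{stable} ($\varphi\to\Box\varphi$), and then proves modalized induction for all stable formulas using the ancestral structure of $\nn$. Your correspondence lemma is slicker for~(i) and gives faithfulness for free; the paper's route makes explicit the stability property that your quantifier step tacitly uses (you need that $\psi^*(\mathbf a_m,\vec{\mathbf a})$ has the same truth value at every world, not just at $w$, in order to move between the witness world and the target world).

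\textbf{A gap in (ii).} Your closing sentence --- ``every step is arithmetical and $\ACA$ is conservative over $\PAO$, hence the interpretation is $\PAO$-verifiable'' --- does not do the work you need. $\PAO$-verifiability is the claim that $\PAO$ proves the single arithmetized sentence ``for all $\varphi$, if $\PAO\vdash\varphi$ then $\ACA\vdash\text{`}\vDash_{\mathsf{PI}}\varphi^*\text{'}$''. Conservativity only lets you transfer arithmetic \emph{theorems} from $\ACA$ to $\PAO$; it says nothing by itself about this provability-about-provability statement. What you actually need is that the proof-to-proof transformation (from a $\PAO$-proof of $\varphi$ to an $\ACA$-proof of $\text{`}\vDash_{\mathsf{PI}}\varphi^*\text{'}$) is a primitive-recursive function that $\PAO$ proves total and correct. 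The paper secures this by checking that each of its lemmas (the modalized $\mathsf Q$ axioms, stability, the induction lemma, and preservation of derivability) has a proof that can be uniformly formalized in $\ACA$ as a function of the input formula, and that this uniformity is itself $\PAO$-visible. Your correspondence-lemma route can be made to work the same way (the external induction on $\varphi$ yields, for each $\varphi$, an $\ACA$-proof whose code is primitive recursive in the code of $\varphi$), but you should say this rather than invoke conservativity.
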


\noindent This result is proven in Section~\ref{sec:interpertation}.  The translation used is based on one offered by \textcite{Linnebo2013} in the setting of modal set theory.  The key difference, compared with the standard notion of translation, is that ``for all'' is translated as ``necessarily for all'' and, similarly, ``there is'' is translated as ``possibly there is.''  

The first theorem shows that the PI models capture a significant amount of mathematics.  However, we cannot strengthen the result to second-order theories of arithmetic as our second main theorem shows: 

\begin{thm}\label{thm:main2}\vspace{0cm}
\begin{enumerate}
    \item[(i)] There is no generalised translation from the language of $\PAT$ to the second-order modal language with octothorpe that interprets $\TAT$ in $\EPI$.
    \item[(ii)] There is no  generalised translation from the language of $\PAT$ to the second-order modal language with octothorpe that $\PAT$-verifiably interprets $\PAT$ in $\PI$.
\end{enumerate}
\end{thm}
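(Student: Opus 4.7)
My plan is to prove both parts by contradiction, playing the combinatorial tameness of PI models (countably many worlds, each with a finite domain) against the analytical complexity of $\TAT$ in (i) and the Gödelian arithmetic strength of $\PAT$ in (ii).

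For part (i), I would run a complexity calculation. First, I would argue $\EPI$ lies at level $\Pi^1_1$ of the analytical hierarchy. Every PI model is countable and can be coded as a real; being a PI model is an arithmetic condition on the code; and for each fixed modal second-order formula $\phi$, the satisfaction relation $\mathcal{M},w \models \phi$ is arithmetic in $\mathcal{M}$, because at each world the second-order quantifiers range over the finite set $\powerset(D(w)^n)$ (so they reduce to finite Boolean operations), and the only unbounded quantifiers come from the modal $\Box$ and $\Diamond$, which range over the countable $W$. Hence $\phi \in \EPI$, i.e.\ $\forall \mathcal{M}\,(\mathrm{PI}(\mathcal{M}) \to \forall w \; \mathcal{M},w \models \phi)$, is $\Pi^1_1$ in $\phi$. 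Second, by the classical completeness of the analytical hierarchy, the set $\TAT \cap \{\phi : \phi \text{ is } \Pi^1_2\}$ is $\Pi^1_2$-complete and a recursive subset of $\TAT$. Third, any generalised translation $t$ interpreting $\TAT$ in $\EPI$ is a computable, negation-preserving syntactic map; since $\TAT$ is complete and $\EPI$ is consistent (as PI models exist), the interpretation is forced to be faithful, so $\phi \in \TAT \iff t(\phi) \in \EPI$. This would many-one reduce a $\Pi^1_2$-complete set to the $\Pi^1_1$ set $\EPI$, contradicting $\Pi^1_1 \subsetneq \Pi^1_2$.

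For part (ii), I would invoke Gödel's second incompleteness theorem. The minimal PI model of Example~\ref{exmp:min} is arithmetically presentable, so $\ACA$ (and hence $\PAT$) formalises it and recognises it as a PI model; combined with the soundness of $\ACA$ (provable in $\PAT$ via a Tarskian truth predicate), this yields $\PAT \vdash \mathrm{Con}(\PI)$, where consistency here unpacks as $\ACA \nvdash \text{`}\vDash_{\mathsf{PI}}\bot\text{'}$. If $t$ were a $\PAT$-verifiable generalised interpretation of $\PAT$ in $\PI$, then formalising within $\PAT$ the standard argument that interpretations transfer consistency would give $\PAT \vdash (\mathrm{Con}(\PI) \to \mathrm{Con}(\PAT))$; combining with the previous step yields $\PAT \vdash \mathrm{Con}(\PAT)$, contradicting Gödel.

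The main obstacle I anticipate is the complexity calculation in (i): one must check carefully that rigidity of $\#$, finiteness of each world, and countability of $W$ together pin $\EPI$ at $\Pi^1_1$, and that the notion of generalised translation from section~\ref{sec:interpertation} really is computable and respects negation, so that the many-one reduction goes through. By contrast, the Gödelian step in (ii) is routine once $\PAT \vdash \mathrm{Con}(\PI)$ has been formalised.
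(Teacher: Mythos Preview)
Your proposal is essentially correct and close in spirit to the paper, though the packaging differs in both parts.

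For (i), the paper also pivots on $\EPI$ being $\Pi^1_1$ and on faithfulness of the putative interpretation (derived, as you do, from completeness of $\TAT$ together with consistency of $\EPI$ and compositionality on connectives). The only difference is the endgame: rather than detouring through $\Pi^1_2$-completeness of a fragment of $\TAT$ and the strict inclusion $\Pi^1_1\subsetneq\Pi^1_2$, the paper simply observes that composing the recursive translation with the $\Pi^1_1$ description of $\EPI$ yields a truth predicate for $\TAT$ definable in the language of $\PAT$, contradicting Tarski directly. Your route is correct but slightly longer; Tarski gives the contradiction in one line once you have faithfulness and $\Pi^1_1$-ness.

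For (ii), your global architecture---derive $\mathrm{Con}(\PI)$ inside the interpreting theory from the definability of the minimal model, then transfer consistency along the verifiable interpretation to violate G\"odel~II---is exactly the paper's. The paper organises it differently: it first drops from $\PAT$ to some finitely axiomatised $\PICAk{k}$ (any $\PAT$-proof of verifiability lives in some $\PICAk{k}$), and then argues semantically in an arbitrary model $\mathcal{A}\models\PICAk{k}$ that $\mathcal{A}$ thinks $\PICAk{k}$ is consistent. Your version, working directly in $\PAT$, is arguably cleaner. One point to tighten, however: you justify $\PAT\vdash\mathrm{Con}(\PI)$ by ``soundness of $\ACA$ provable in $\PAT$ via a Tarskian truth predicate.'' That is not available---the language of $\ACA$ is the language of $\PAT$, so Tarski blocks a full truth predicate, and hence full soundness, inside $\PAT$. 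What you actually need, and what suffices, is only reflection for the single $\Pi^1_1$ sentence ``$\vDash_{\mathsf{PI}}\bot$'' (equivalently, ``there are no PI models''); this is delivered by the $\Pi^1_1$-reflection of $\ACA$ provable already in $\PICA\subseteq\PAT$, which is precisely the tool the paper invokes. With that correction your argument goes through.
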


\noindent For both $\EPI$ and $\PI$, the results follow from the fact that PI models are $\Pi^1_1$ definable.  And this follows because all of the worlds are finite.  Because of this, PI models are representable in reasonably weak theories of second-order arithmetic.  But then limitive results about what theories can represent about themselves will stop theories that can represent $\EPI$ and $\PI$ being interpretable into $\EPI$ and $\PI$.

These results are important because they show that less mathematics is analytic on the philosophical perspective which motivates the potentially infinite models than on the traditional perspective.  The external theory cannot recover $\TAT$ but only $\TAO$.  And the internal theory cannot recover $\PAT$ but only $\PAO$. Further, $\PAT$ has traditionally been the target of Fregean interpretation results as it allows for the recovery of analysis and much of mathematics.\footnote{\textcite[238 n26]{Demopoulos1994} points out that Frege often uses arithmetic when he means something broader including analysis.
} Analysis can be coded in second-order Peano arithmetic, as real numbers can be coded as sets of rationals, which in turn can be coded as naturals.  This means that Frege's theorem already accounts for a larger expanse of mathematics than it might first appear. If we try to avoid the claim that it is analytic that there are actually infinitely many objects, however, it then seems we will not have managed to recover as much mathematics.  If we are looking to show that mathematics is analytic, we have moved further from our goal.  

However, we have still captured a substantial chunk of our most frequently used mathematics.  \textcite[613]{Feferman2005-nf} has argued that all scientifically applicable analysis can be developed in $\PAO$ or a conservative extension of it.\footnote{For example, ``By the fact of the proof-theoretical reduction of $W$ to [$\PAO$], the only ontology it commits one to is that which justifies acceptance of [$\PAO$]." (\cite[613]{Feferman2005-nf})  Feferman works in a system $W$ which contains types for the naturals, the cross product and partial functions.  The full classical analysis of continuous functions can be carried out in $W$. (\cite[611]{Feferman2005-nf})} If this is correct then we can still recover the mathematics for which an explication of its truth is most philosophically fruitful, namely the mathematics which we rely on when we act in the world.  One might wonder why a logicist would care about whether or not the mathematics recovered is used.  But it seems we should keep an open mind to different parts of mathematics being justified in different ways.  Maybe something as fundamental as first-order arithmetic turns out to be analytic, but it seems unlikely that the same is true of the higher reaches of set theory.  With this in mind, it should not be damaging that not all mathematics turns out to be analytic.

\subsection{A Diversity of Modal Logicisms}

\noindent The idea of using the potentially infinite as a foundation of logicism has a pedigree in the work of Putnam and Hodes, and more recent work on modal foundations of mathematics and on variants of Frege's theorem in different logics.  Putnam suggested that by accepting a modal picture of mathematics we could avoid being Platonists about the numbers or committing to how many objects there actually are.  This is stated most clearly when he writes:

\begin{quote}
    `Numbers exist'; but all this comes to, for mathematics anyway, is that (I) $\omega$-sequences are possible (mathematically speaking); and (2) there are necessary truths of the form `if $\alpha$ is an $\omega$-sequence, then\dots '[.] (\cite[11--12]{Putnam1967a}) 
\end{quote}

\noindent Hodes took on this idea, but he was sceptical of the existence of actual infinities. He thought that `[a]rithmetic should be able to face boldly the dreadful chance that in the actual world there are only finitely many objects' (\cite[148]{Hodes1984}).  His solution made use of the idea of the potentially infinite rather than the actually infinite. He appealed to modality and in particular the modality that seems to be implicit in our concept of number: the idea that it is always possible to add 1~(\cite[378]{Hodes1990}). 

However, by 1990, Hodes concluded that the reduction of mathematics to higher-order modal logic had failed. Hodes describes the problem as follows:

\begin{quote}
The problem is simple: relative to [a model of Hume's Principle] for a type-0 variable $v$, $\Diamond(\exists v)(\underline{N}(v)\&\dots)$ ``moves us'' to other worlds $u$ and then has us seek a witnessing member of [the natural number in the model] in [the domain of $u$]; we may find one, but then have no way ``back'' to $w$ to see what hold [sic] for it there. (\cite[388]{Hodes1990}) 
\end{quote}

\noindent So we might know that there possibly exists a number with a property, but in Hodes's system, we have no way of returning to our original world to use what we have found.  For example, if we find the number of a set in some world, we have no assurance that this number is available for us to talk about in the world the set came from. It is only known that it is the number of the set {\it in the world the number exists in}.  The difficulty identified here is with cross-world predication, which occurs when we want to say something about an object in one world and how it relates to objects in another world (\cite{Kocurek}).

In what follows we will show that the problem is not with cross-world predication \emph{per~se}. Both by working directly with the models, but also by allowing the octothorpe to be rigid, we can mimic some of the effects of cross-world predication. Yet in this setting we recover some but not all of the arithmetic recovered by Frege's theorem. Indeed, our main results, Theorems~\ref{thm:main}~and~\ref{thm:main2}, show that the situation is more complicated than Hodes suggested, and that a partial realisation of his project is possible. 

There are two recent trends in the study of logicism which this project is connected to.  First, \textcite{Studd2015} has suggested that the modal setting is an attractive one for the logicist because it would help to solve the {bad company objections}.  Unlike here, Studd's is concerned with inconsistent abstraction principles and in particular set abstraction.  This is interestingly connected to the na\"ive conception of set because one can think of the unrestricted set Comprehension Schema as similar in spirit to a modal version of Basic Law V. While work in this area goes back to \textcite{Parsons1983aa}, it has been pursued recently by Linnebo (\citeyear{Linnebo2013}; \citeyear{Linnebo2018}). Much of Linnebo's work has been on set theory.  The concerns there are very different from ours, as it make little sense in set theory to worry about the actual infinite not existing and set theory is generally treated in first-order logic.The work in this paper takes inspiration from the results presented in \textcite{Linnebo2013} and (\citeyear{Linnebo2018}) and makes use of a similar method of translating between the modal and non-modal setting. However, while the dynamic abstraction principles discussed by \textcite{Linnebo2018} resemble the behaviour of the \emph{number of} operator, his preferred abstraction principle for arithmetic is ordinal abstraction (\cite[Ch. 10.5]{Linnebo2018}), whereas in this paper we work with a modal version of Hume's Principle, a cardinality principle.  

Second, there has been a lot of recent work on whether Frege's Theorem still holds when the logic is modified in certain ways.  \textcite{Bell1999-BELFTI-2} and Shapiro and Linnebo (\citeyear{Shapiro2015-SHAFMB}) have shown that Frege's Theorem is available in the intuitionistic setting.  \textcite{Burgess2005} and \textcite{Walsh2016} found that a version of Frege's Theorem is possible in a certain predicative setting.  \textcite{Kim2015aa} proves a version of Frege's Theorem in a modal setting. This employs an axiomatised version of the `the number of $F$'s is $n$' as a binary relation, instead of the traditional type-lowering `number of' operator.   Kim recovers the axioms of PA but finds that a restricted version of $\HP$ holds.  The modality used is S5 and meant to represent logical possibility, not potentiality.  Because of this Kim's system does not have the same structure of our models, where the numbers slowly grow.  Closer in spirit to the work here is that on finite models of arithmetic by \textcite{Mostowski2001}.  There he considers initial sequences of the natural numbers and what holds over all such models.  These have a clear connection to the minimal model discussed above. \textcite{Urbaniak2016}  has taken Mostowski's models and worked with them in a modal setting.  They have shown that Le\'sniewski's typed, free logic with modal quantifiers, which proves a predicative version of $\HP$, can interpret $\PAT$.   Our setting is quite different from that of Urbaniak's paper as Le\'sniewski's typed, free logic differs dramatically from the one we work in here.   The work in this paper proceeds  by looking at whether a version of Frege's Theorem  is available in a classical second-order modal setting.    Unlike these other results, we find that a modal version of Frege's Theorem for $\PAT$ is \emph{not} possible, as shown by Theorem~\ref{thm:main2}.

\subsection{Outline of paper}

This paper is organised as follows. Section~\ref{sec:lang} expands the potentially infinite models' language to include the language of arithmetic. In Section~\ref{sec:q} we show that using the expanded language the potentially infinite models satisfy a weak theory of arithmetic equivalent to a modal version of Robinson's $\Q$. In Section~\ref{sec:iterp} we define the inductive formulas of the language and show that induction holds for them. This allows us to show Theorem~\ref{thm:main}, that $\TAO$ is interpretable in our external theory and $\PAO$ is interpretable in our internal theory, in Section~\ref{sec:interpertation}.  In Section~\ref{sec:prmain2} we show that no natural interpretation of $\PAT$ is possible by proving Theorem~\ref{thm:main2}.

\section{Definitions for a Modal Grundlagen}\label{sec:lang}

Just as Frege in the \emph{Grundlagen} defined the numbers and the relations on them using only the `number of' operator, here we show how modified versions of Frege's definitions can do this in the setting of the potentially infinite.\footnote{This has some precedent in Hodes (\citeyear[383]{Hodes1990}).  However, whereas we (and Frege) first define successor and then use this to build the other definitions, Hodes takes `less than or equal to' as his primitive.  In his system a number $N$ (understood as a higher-order object) is less than or equal to another number $N'$ just in case it is possible that there are two other second-order objects $A$ and $A'$ each with the same number of objects as $N$ and $N'$ respectively and $A$ is a subset of $A'$.  That this has parallels with the definition of successor offered here will be clear on inspection.} 
Proving that these definitions satisfy the usual arithmetical axioms will occupy us in \S\S\ref{sec:q}--\ref{sec:iterp}. In this section we simply set out the definitions themselves and say a word about their motivation. While entirely rigorous, it is our hope that, as in the \emph{Grundlagen}, the definitions will be intuitive and correspond to our understanding of cardinal numbers.

The first definition is easy and does not require any of the modal apparatus. We simply let $0=\#\varnothing$. This follows Frege~(\citeyear[\S74~p.~87]{Frege1950}) explicitly, who said that zero is ``the Number which belongs to the concept `not identical with itself'".  

\begin{figure}[t]
\begin{subfigure}[b]{0.3\textwidth}
\scalebox{0.8}{
\begin{tikzpicture}
\draw (1,1) circle [radius=1];;
\draw (4,1) circle [radius=1];;
\draw [->] (2,1) -- (3,1);
\node at (2.4,1.25) {$\exists$};
\node [above] at (1,1.5) {a};
\draw[fill] (1,1.5) circle [radius=0.025];
\node [above] at (0.5,0.5) {b};
\draw[fill] (0.5,0.5) circle [radius=0.025];
\draw (4,1) circle [radius=0.5];;
\node [above] at (4,1.5) {X};
\node [above] at (4,1) {x};
\draw[fill] (4,1) circle [radius=0.025];
\node [above] at (1,2) {$Sab$};
\node [above] at (4,2.5) {$a=\#X/\{x\}$};
\node [above] at (4,2) {$b=\#X$};
\end{tikzpicture}}
\caption{Diagram of when a is succeeded by b.\\ \phantom{a}}
\label{fig:suc}
\end{subfigure}
\begin{subfigure}[b]{0.3\textwidth}
\scalebox{0.8}{
\begin{tikzpicture}
\draw (1,1) circle [radius=1];;
\draw (4,1) circle [radius=1];;
\draw [->] (2,1) -- (3,1);
\node at (2.4,1.25) {$\exists$};
\node [above] at (1,1.5) {a};
\draw[fill] (1,1.5) circle [radius=0.025];
\node [above] at (0.5,0.5) {b};
\draw[fill] (0.5,0.5) circle [radius=0.025];
\draw (3.75,1.25) circle [radius=0.5];;
\node [above] at (4.2,1.5) {X};
\node [above] at (4.5,0.5) {x};
\draw[fill] (4.5,0.5) circle [radius=0.025];
\node [above] at (1,2) {$Sab$};
\node [above] at (4,2.5) {$a=\#X$};
\node [above] at (4,2) {$b=\#X\cup\{x\}$};
\end{tikzpicture}}
\caption{Diagram of when a is succeeded by b for alternative definition of S.}
\label{fig:sucalt}
\end{subfigure}
\begin{subfigure}[b]{0.3\textwidth}
\scalebox{0.8}{
\begin{tikzpicture}
\draw (1,1) circle [radius=1];;
\draw (4,1) circle [radius=1];;
\draw [->] (2,1) -- (3,1);
\node at (2.4,1.25) {$\exists$};
\node [above] at (1,1.5) {a};
\draw[fill] (1,1.5) circle [radius=0.025];
\node [above] at (0.5,0.5) {b};
\draw[fill] (0.5,0.5) circle [radius=0.025];
\node [above] at (1.5,0.5) {c};
\draw[fill] (1.5,0.5) circle [radius=0.025];
\draw (3.75,1.25) circle [radius=0.25];;
\node [above] at (3.75,1.5) {X};
\node [above] at (4.25,0.75) {Y};
\draw[fill] (4.25,0.5) circle [radius=0.25];
\node [above] at (1,2) {$+(a,b,c)$};
\node [above] at (4,3) {$a=\#X$};
\node [above] at (4,2.5) {$b=\#Y$};
\node [above] at (4,2) {$c=\#X\cup Y$};
\end{tikzpicture}}
\caption{Diagram of when c is the addition of b and c.\\ \phantom{a}}
\label{fig:addition}
\end{subfigure}
\caption{}
\label{fig:diagrams}
\end{figure}

Next we must define the successor, as the other definitions rely on it.  The definition here is like the one offered by Frege, but it differs by allowing the sets which witness that one object is the successor of another to be merely possible.  This is to ensure that if an object is ever the successor of another, then it is the successor of that object in every world where they both exist. This property will be important in the proof of induction. The definition of successor, in plain terms, is: one object is the successor of another just in case it is possible that there are two sets, which differ by one object and the successor is the number of the larger set, and the predecessor is the number of the smaller set. Figures~\ref{fig:suc} and \ref{fig:sucalt} illustrate the two ways this can be done, resulting in two definitions of the successor:

\begin{definition}
\begin{equation}\label{suc}
Sxy\equiv\Diamond\exists G, u[Gu\wedge (y=\#G)\wedge(x=\#(G-\set{u}))]
\end{equation}
\begin{equation}\label{suc2}
S'xy\equiv\Diamond\exists F, u[\neg Fu\wedge (x=\#F)\wedge(y=\#(F\cup\set{u}))]
\end{equation}
\end{definition}

\noindent The first of these definitions simply adds the possibility operator to the definition of successor suggested by  Frege~(\citeyear[\S76~p.~89]{Frege1950}).  These definitions are equivalent: to see this, simply consider $F=G-\set{u}$ and $G=F\cup\set{u}$.\footnote{For easy of readability, we will use set theoretic notation as a convenient short hand for concepts formed using the language of the model.  So $F\cup\set{u}$ is used for the concept given by $Xx\leftrightarrow (Fx\vee x=u)$.}  In what follows we will simply use the definition that is most convenient and will write $S$ for both.

The definition of addition is similarly intuitive. The relation $+$ holds between three objects $a$, $b$, and $c$ such that it is possible that there are disjoint sets $X$ and $Y$ of cardinality $a$ and $b$ respectively, and $c$ is the cardinality of $X\cup Y$, the union of the two disjoint sets. This is illustrated by Figure~\ref{fig:addition} and can be written formally as:

\begin{definition}\label{addit}
\begin{equation}
    +\!(a,b,c)\equiv\Diamond\exists X,Y(a=\#X\wedge b=\#Y\wedge c=\#X\cup Y\wedge (X\cap Y)=\varnothing)
\end{equation}
\end{definition}

For $c$ to be the result of multiplying $a$ and $b$ we need a set $B$ of cardinality $b$ and for each element $x$ of $B$ a set $A_x$ of cardinality $a$.  The $A_x$'s must all be disjoint.  And $c$ must be the cardinality of the union of all the $A_x$'s. To define the $A_x$'s we define a binary relation $P$ that holds between $x$ in $B$ and all $y$ in $A_x$.  So $A_x$ is $\set{y\mid Pxy}$.

\begin{definition}\label{multip}
\begin{multline}
    \times\!(a,b,c)\equiv\Diamond\exists X,P[\#X=b\wedge \forall x\in X(\#\set{y\mid Pxy}=a)\\\wedge \forall x,y\in X(x\neq y\rightarrow\set{z\mid Pxz}\cap\set{z\mid Pyz}=\varnothing)\wedge \#\bigcup_{x\in X}\set{y\mid Pxy}=c]
\end{multline}
\end{definition}

The definition of the natural numbers is more complicated and require us to define the notion that one number follows another in the ordering of the natural numbers. We will make use of Frege's definition from the 1879 \emph{Begriffsschrift} (\citeyear[\S III pp.~55~ff]{Heijenoort1967aa}; \citeyear[\S79~p.~92~ff]{Frege1950}).     Russell and Whitehead (\citeyear[316]{whitehead1910}) called this relation the \textit{ancestral relation} because a good example of what it does is define the relation `ancestor of' from  the relation `parent of'.  The \textit{strong ancestral} of $\varphi$ holds between two objects $a$ and $b$ just in case $b$ is contained in every set such that the set is closed under $\varphi$ and the set contains everything $a$ bears $\varphi$ to.  So, we can define someone's ancestors as everyone who is in every set that contains their parents and the parents of everyone in the set.  It is not guaranteed that $a$ bears this relation to itself, and so we also define the reflexive \textit{weak ancestral}.

\begin{definition}[The strong ancestral]\label{sances}
\[ \varphi^+(a,b)\equiv\forall X[(\forall x,y(Xx\wedge \varphi (x,y)\rightarrow Xy)\wedge\forall x(\varphi (a,x)\rightarrow Xx))\rightarrow Xb]. \]
\end{definition}

\begin{definition}[The weak ancestral]\label{wances}
\[\varphi ^{+=}(a,b)\equiv \varphi ^+(a,b)\vee a=b.\]
\end{definition}

\noindent Using this definition, we define a natural number as an object that is some finite number of successor steps from 0, assuming 0 exists.

\begin{definition}[Natural Number]\label{nn}
$$\mathbb{N}x\equiv S^{+=}0x\wedge\exists y(y=0).$$
\end{definition}

\noindent This definition closely parallels Frege's, though the definition of $S$ is different.  The existence claim is added because in the modal setting 0's existence cannot be assumed.  For example, 0 does not exist at worlds \textbf{0}, \textbf{1}, and \textbf{2} in the \textbf{0}-\textbf{3} swap model, and, as \textbf{0} is not a member of infinitely many finite subsets of the natural numbers, 0 does not exist at infinitely many worlds in the subset model. In these worlds nothing is a natural number.

\subsection{Some useful results}\label{sec:usefulresults}

The following six lemmas will help explain the behaviour of $\nn$ in the models. We admit the proofs as they do not pose any particular difficulty. For the following Lemmas, recall Definition~\ref{pivalid} where $\vDash_{\mathsf{PI}}\varphi$ was defined as $\varphi$ is true in all worlds in all potentially infinite models.  First, note that the set defined by $\nn$ at a world satisfies the antecedent of $S^+0x$.  Intuitively, the idea here is that if $x$ is in every set containing 0 and closed under $S$, and $Sxy$, or $S0y$, then $y$ must also be in every set with these properties. 

\begin{lem}\label{lem:nnante2}
$\vDash_{\mathsf{PI}}\exists x(x=0)\rightarrow\forall y(S0y\rightarrow\nn y))$
\end{lem}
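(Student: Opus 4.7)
The plan is a direct proof by unfolding the definitions of $S^+$, $S^{+=}$, and $\nn$. Fix an arbitrary PI model $\mathcal{M}=\langle W,R,D,I\rangle$ and world $w\in W$, and assume both that $\exists x(x=0)$ holds at $w$ (so the constant $0=\#\varnothing$ denotes some element of $D(w)$) and that $S(0,y)$ holds at $w$ for some $y\in D(w)$ picked by the outer actualist universal. The goal is to derive $\nn y$ at $w$, which unfolds by Definition \ref{nn} to $S^{+=}(0,y)\wedge\exists x(x=0)$; the second conjunct is immediate from the hypothesis, so the entire content of the proof concerns $S^{+=}(0,y)$.

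Expanding via Definition \ref{wances}, it suffices to establish $S^+(0,y)\vee 0=y$. If $0=y$ the disjunction holds trivially, so I restrict attention to the case $y\neq 0$ and aim to verify $S^+(0,y)$ directly from Definition \ref{sances}. To do so I fix an arbitrary second-order witness $X\in\powerset(D(w))$ (which is the range of the second-order quantifier at $w$ by condition \ref{jo}.3) and assume the antecedent: namely that $X$ is closed under $S$, and that $\forall u(S(0,u)\to Xu)$. Instantiating the second conjunct of the antecedent at $u=y$ — which is permitted because $y\in D(w)$ — and invoking the hypothesis $S(0,y)$, I conclude $Xy$, which is exactly the consequent required for $S^+(0,y)$.

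I do not anticipate any real obstacle. The antecedent of the strong ancestral is tailor-made so that every immediate $S$-successor of $0$ is absorbed into every candidate set $X$, and the lemma is essentially registering this observation. The only bookkeeping point worth attention is making sure the quantifier instantiations respect the actualist semantics; this is handled by the outer universal over $y$ providing an element of $D(w)$ together with the hypothesis $\exists x(x=0)$ ensuring that $0$ also denotes in $D(w)$, so that $S(0,y)$ is a well-formed predication at $w$ and can legitimately be substituted into the universal over $u$.
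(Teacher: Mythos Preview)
Your proof is correct and follows the same approach as the paper: the paper omits the details, offering only the remark that if $S0y$ then $y$ must lie in every set containing the $S$-successors of $0$ and closed under $S$, which is exactly what you verify by instantiating the second conjunct of the ancestral's antecedent at $y$. The case split on $y=0$ is harmless but unnecessary, since your argument for $S^+(0,y)$ goes through regardless.
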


\begin{lem}\label{lem:nnante1}
$\vDash_{\mathsf{PI}}\forall x,y(\nn x\wedge Sxy\rightarrow\nn y)$
\end{lem}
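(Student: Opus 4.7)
The plan is to fix a PI model $\mathcal{M}$, a world $w$, and elements $x,y\in D(w)$ for which $\nn x$ and $Sxy$ hold at $w$, then show $\nn y$ holds at $w$. Unpacking $\nn x$ yields both $\exists z(z=0)$ and $S^{+=}0x$ at $w$; in particular $0\in D(w)$, so the existence conjunct of $\nn y$ is immediate and the real work is establishing $S^{+=}0y$ at $w$.

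To get $S^{+=}0y$ I would split on the disjunction in $S^{+=}0x$. If $x=0$, then $Sxy$ just is $S0y$, and then for any $X\subseteq D(w)$ satisfying the antecedent of $S^+0y$, the second conjunct $\forall x'(S(0,x')\rightarrow Xx')$ immediately forces $y\in X$, giving $S^+0y$. If instead $S^+0x$ holds, I would take an arbitrary $X\subseteq D(w)$ satisfying the antecedent $\forall x',y'(Xx'\wedge Sx'y'\rightarrow Xy')\wedge \forall x'(S0x'\rightarrow Xx')$. Specialising $S^+0x$ to this $X$ gives $x\in X$; then applying the closure clause with the hypothesis $Sxy$ yields $y\in X$, so $S^+0y$ holds.

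The potentially delicate point, and the one I expect to be the main obstacle to worry about, is the interaction between the modal definition of $S$ and the local evaluation of $S^+$. The second-order quantifier $\forall X$ in $S^+$ ranges over $\mathscr{P}(D(w))$, and the atomic occurrences $S(x',y')$ inside are themselves $\Diamond\exists\ldots$ statements evaluated at $w$. I would therefore flag that no rigidity or cross-world argument is needed here: the hypothesis $Sxy$ at $w$ is used as an atomic fact at $w$, so the closure clause applies literally, and no auxiliary lemma about stability of $S$ across worlds is invoked. With these two cases handled, both conjuncts of $\nn y$ at $w$ are established, and since $w$ and $\mathcal{M}$ were arbitrary the universal PI-validity follows.
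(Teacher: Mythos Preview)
Your proof is correct and is precisely the argument the paper has in mind. The paper omits the proof of this lemma, offering only the one-line gloss that ``if $x$ is in every set containing $0$ and closed under $S$, and $Sxy$, or $S0y$, then $y$ must also be in every set with these properties''; your case split on $S^{+=}0x$ and use of the two antecedent clauses of $S^+$ is exactly a formalisation of that remark, and your observation that no stability lemma is needed (since $Sxy$ is simply being used as a fact at $w$) is well taken.
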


\noindent It follows immediately from this that if $x$ exists at a world and at that world $\nn y$ and $Syx$ then $\nn x$.  However, that doesn't mean $\nn$ is the set of all numbers across all worlds as $\nn$ only holds of objects which exist at the world of evaluation.  This contrasts with our other definitions where the objects need not exist at the world.  

\begin{lem}\label{lem:noexist}
$\models_{PI} \mathbb{N}x \rightarrow\exists y \; y=x$
\end{lem}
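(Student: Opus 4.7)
The plan is to use the strong ancestral clause by instantiating its second-order universal quantifier with the full first-order domain $D(w)$ itself, and then exploit the actualist reading of first-order quantifiers to make the closure conditions vacuously true.

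First I would unpack the hypothesis: suppose $\mathcal{M}, w \models \mathbb{N}x$ under an assignment sending the free variable to some object $\alpha$. By Definition~\ref{nn} this gives both $\mathcal{M}, w \models \exists y(y=0)$ and $\mathcal{M}, w \models S^{+=}(0, x)$. The first conjunct, combined with the actualist quantifier semantics, immediately yields $0 \in D(w)$. The second conjunct, by Definition~\ref{wances}, splits into two cases.

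In the easy case, $\alpha = 0$, so $\alpha \in D(w)$ and $\mathcal{M}, w \models \exists y \; y=x$ is immediate. In the remaining case, $\mathcal{M}, w \models S^+(0, x)$. Here I would apply the universal quantifier $\forall X$ in Definition~\ref{sances} to the witness $X := D(w)$, which lies in $\powerset(D(w))$ and hence in the range of the second-order monadic quantifier at $w$ by condition~\ref{jo}.3. Because the first-order quantifiers in the two closure conjuncts are actualist and thus range over $D(w)$, both conditions $\forall x,y(Xx \wedge S(x,y) \to Xy)$ and $\forall x(S(0,x) \to Xx)$ reduce to the trivially true statement that every element of $D(w)$ lies in $D(w)$. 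The consequent $Xb$ of the strong ancestral must then hold at $w$, giving $\alpha \in D(w)$ and therefore $\mathcal{M}, w \models \exists y \; y=x$.

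There is no real obstacle here; the only thing to be careful about is the interaction between the actualist first-order semantics and the second-order quantifier. The whole point is that we are allowed to plug in the current world's domain as a second-order object, and once we do, the closure conditions collapse because the bound first-order variables inside them cannot escape that domain. This is precisely why $\mathbb{N}$, unlike relations such as $S$ or $+$, is genuinely sensitive to the world of evaluation and forces the objects it applies to exist there.
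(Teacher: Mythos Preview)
Your proof is correct and matches the paper's intended approach: the paper omits the argument entirely, noting only that ``the quantifiers in $\nn$ are plain rather than having modals in front of them,'' and your instantiation of the second-order quantifier in $S^{+}$ with $X=D(w)$ is precisely the way to cash out that remark rigorously.
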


\noindent This is because the quantifiers in $\nn$ are plain rather than having modals in front of them.  This is important because if we put the modals in front everything is a number!

 We informally extend our definition of the interpretation function $I$ to $I(\nn,s)=\set{x\in D(s)\mid \momodels{s}{g}\nn x}$. Note that by Lemma~\ref{lem:noexist} we have $\set{x\in D(s)\mid \momodels{s}{g}\nn x}=\set{x\in D\mid \momodels{s}{g}\nn x}$, where $D$ is the domain of the model not the world.  

Recall that $\mathbf{a}_i$ is the unique element in $D$ such that if $|X|=i$ then $I(\#,w)(X)=\mathbf{a}_i$ as defined in \ref{jo}.5. We can now explicitly describe the interpretation of $\nn$ at a world $w$ in terms of the $\mathbf{a}_i$'s, that is, the set $I(\nn,w)$:

\begin{lem}\label{fact:nnatworld}
    Let $w$ be a world and let $n$ be the first number such that $\mathbf{a}_n\notin D(w)$. Then if $n>0$, it follows that $\set{0,\mathbf{a}_1,\dots,\mathbf{a}_{n-1}}=I(\nn,w)$, and further, $n=0$ iff $I(\nn,w)=\varnothing$.
\end{lem}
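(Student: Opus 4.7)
The plan is to split the lemma into the biconditional $n = 0 \Leftrightarrow I(\nn, w) = \varnothing$ and the equality in the case $n > 0$, and to prove the latter by checking both inclusions. The biconditional is immediate from Definition~\ref{nn}: because $\nn x$ conjoins $\exists y(y = 0)$ with an actualist quantifier, no element of $D(w)$ can satisfy $\nn$ unless $0 \in D(w)$, i.e., unless $n \geq 1$; and when $n \geq 1$ the element $0$ itself lies in $I(\nn, w)$ via the reflexive disjunct of $S^{+=}$.

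For the forward inclusion $\set{0, \mathbf{a}_1, \dots, \mathbf{a}_{n-1}} \subseteq I(\nn, w)$, I would induct on $i < n$, showing $\nn \mathbf{a}_i$ at $w$. The inductive step reduces, via Lemma~\ref{lem:nnante1}, to verifying $S\mathbf{a}_{i-1}\mathbf{a}_i$ at $w$; unpacking the definition of $S$, this in turn asks for an $R$-accessible world carrying a non-empty set of cardinality $i$, which is supplied by directedness (condition~\ref{jo}.1) together with the strictly increasing finite domains (condition~\ref{jo}.4), given that $\bigcup_w D(w)$ is infinite by condition~\ref{jo}.5. The rigidity of $\#$ from \ref{jo}.5 then pins down the cardinalities of $G$ and $G - \set{u}$ to $\mathbf{a}_i$ and $\mathbf{a}_{i-1}$, as required.

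The main obstacle is the reverse inclusion $I(\nn, w) \subseteq \set{0, \mathbf{a}_1, \dots, \mathbf{a}_{n-1}}$. My strategy is to instantiate the universal second-order quantifier hidden in the strong ancestral $S^+0y$ with the candidate set $X = \set{0, \mathbf{a}_1, \dots, \mathbf{a}_{n-1}}$ itself, which is available at $w$ by condition~\ref{jo}.3 since $X \subseteq D(w)$. The key numerical observation, drawn from the injectivity and rigidity of $\mathbf{a}$ in \ref{jo}.5, is that the definition of $S$ forces $y = \mathbf{a}_{i+1}$ whenever $x = \mathbf{a}_i$ and $Sxy$ holds. Given this, showing that $X$ satisfies both conjuncts of the antecedent of $S^+0y$ reduces to observing that actualism rules out witnesses landing outside $D(w)$, so the successor chain inside $D(w)$ cannot escape past the boundary $\mathbf{a}_n \notin D(w)$. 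Combined with the $0 = y$ disjunct of $S^{+=}$, this forces every $y \in I(\nn, w)$ into $X$.
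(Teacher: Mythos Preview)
Your proof is correct. The paper omits the proof of this lemma entirely (the sentence preceding the block of six lemmas in \S\ref{sec:usefulresults} reads ``We admit the proofs as they do not pose any particular difficulty,'' evidently a typo for ``omit''), so there is nothing to compare against; your decomposition into the biconditional, the forward inclusion by metatheoretic induction via Lemma~\ref{lem:nnante1}, and the reverse inclusion by instantiating the second-order quantifier of $S^+$ with $X=\set{0,\mathbf{a}_1,\dots,\mathbf{a}_{n-1}}$ is exactly the natural argument one would expect the authors to have in mind.

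One small simplification: in the forward-inclusion step you reach for directedness and the infinitude of $\bigcup_w D(w)$ to locate a world with a set of cardinality $i$, but since $\mathbf{a}_0,\dots,\mathbf{a}_{i-1}$ already lie in $D(w)$ and $R$ is reflexive, the witness for $S\mathbf{a}_{i-1}\mathbf{a}_i$ is available at $w$ itself.
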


\noindent This result shows us how the differences between our modal setting and the traditional non-modal setting of the \emph{Grundlagen} become most stark in the case of the interpretation of the natural numbers at a world.  Two things are worth highlighting.  The first is that $\nn$ is finite at every world, since it is a subset of the domain of the world, and the domain of every world is finite.  The second is that objects that are not in $\nn$ at one world can `become' numbers at later worlds.   This doesn't happen in the minimal model, where $I(\nn,\mathbf{n})=D(\mathbf{n})$ at every world.  But it does in the subset model.  For example, $I(\nn,\set{\mathbf{2},\mathbf{100}})=\varnothing$, $I(\nn,\set{\mathbf{0},\mathbf{1},\mathbf{3}})=\set{\mathbf{0},\mathbf{1}}$ and $I(\nn,\set{\mathbf{0},\mathbf{1},\mathbf{2},\mathbf{3},\mathbf{100}})=\set{\mathbf{0},\mathbf{1},\mathbf{2},\mathbf{3}}$.   This distinguishes $\neg\nn(x)$ from the other relations which have a certain stability; if objects stand in these relations at one world, then they do so in all worlds in which they all exist. The formal definition of stability is given as Definition~\ref{stable}.  This difference is caused by there being no possibility operator at the beginning of the definition of $\nn$.  Despite this, once something is a number it remains one: 

\begin{lem}\label{lem:sstability}
    $\vDash_{\mathsf{PI}}S(x,y)\rightarrow\Box S(x,y)$ holds, as does $\vDash_{\mathsf{PI}}S^+(x,y)\rightarrow\Box S^+(x,y)$, $\vDash_{\mathsf{PI}}S^{+=}(x,y)\rightarrow\Box S^{+=}(x,y)$ and $\vDash_{\mathsf{PI}}\nn x\rightarrow\Box \nn x$.
\end{lem}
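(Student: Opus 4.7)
The plan is to establish the four stability claims in the order given, with each later one leaning on its predecessors. Throughout I rely on three structural features of PI models: rigidity of $\#$ (condition~\ref{jo}.5) and of second-order variable assignments (noted just before Definition~\ref{pivalid}); monotonicity of first-order domains along $R$ (condition~\ref{jo}.4); and directedness of $R$ (condition~\ref{jo}.1).

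For $S(x,y)\to\Box S(x,y)$, fix $w$ validating $S(x,y)$ and an arbitrary $t$ with $R(w,t)$. The outer $\Diamond$ supplies a witness world $s$ accessible from $w$ at which the body holds for some $G\in\powerset(D(s))$ and some $u\in D(s)$. By directedness I pick a common $R$-successor $r$ of $s$ and $t$; monotonicity makes $G$ and $u$ legitimate witnesses at $r$, and the identities $y=\#G$ and $x=\#(G\setminus\{u\})$ survive because $I(\#,\cdot)$ is the same at every world. Hence the body holds at $r$, so $\Diamond$ of it holds at $t$, as required.

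The $S^+$ case is where I expect the main difficulty, because the universal second-order quantifier $\forall X$ at $t$ ranges over $\powerset(D(t))$, strictly larger than $\powerset(D(w))$, so validity at $w$ does not transfer verbatim. My plan is a restriction trick: given an arbitrary $X\in\powerset(D(t))$ for which the two closure hypotheses of the strong ancestral hold at $t$, put $X' := X\cap D(w)\in\powerset(D(w))$ and verify that the closure hypotheses for $X'$ hold at $w$. Since the inner quantifiers in those hypotheses are actualist and range only over $D(w)$, and since $S$ has just been shown stable, this verification is routine bookkeeping. Applying $S^+(x,y)$ at $w$ to $X'$ yields $X'y$, and then $Xy$ at $t$ follows from $X'\subseteq X$. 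The $S^{+=}$ case is then immediate, since stability is preserved under disjunction and identity between rigidly-designated objects is trivially stable.

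Finally, for $\nn x\to\Box\nn x$, observe that $0$ abbreviates $\#\varnothing$, which rigidly designates $\mathbf{a}_0$. The conjunct $\exists y(y=0)$ holding at $w$ therefore asserts $\mathbf{a}_0\in D(w)$, and this propagates along $R$ by condition~\ref{jo}.4. The other conjunct $S^{+=}(0,x)$ is stable by the preceding step. Hence both conjuncts of $\nn x$ persist to every $R$-successor of $w$, yielding $\Box \nn x$. The crux, as flagged, is making the restriction $X' := X\cap D(w)$ work in the $S^+$ case, and its soundness is precisely what dictates the order of proof.
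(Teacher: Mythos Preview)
Your argument is correct. The paper itself omits the proof of this lemma (it is grouped with the other ``useful results'' in \S\ref{sec:usefulresults}, where the author writes that the proofs ``do not pose any particular difficulty''), so there is no detailed argument in the paper to compare against. What you have written is precisely the natural semantic verification the paper is gesturing at: the $S$ case uses directedness plus domain monotonicity and rigidity of $\#$ to relocate the existential witnesses; the $S^+$ case uses the restriction $X':=X\cap D(w)$, which is exactly the right move because the inner first-order quantifiers in the ancestral are actualist and $S$-stability lets you push the closure hypotheses from $w$ up to $t$; the $S^{+=}$ and $\nn$ cases then follow by closure of stability under disjunction and conjunction together with the persistence of $\mathbf{a}_0\in D(w)$ along $R$. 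One small remark: your $S^+$ argument implicitly uses that $S^+(x,y)$ at $w$ forces $y\in D(w)$ (otherwise $X'y$ could never hold), but this is automatic since $X'=D(w)$ itself satisfies the closure hypotheses, so the step ``$X'y$, hence $Xy$'' is indeed sound without further comment.
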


\noindent It is also worth noting that even though some cardinalities may not be numbers at ever world, the cardinality of every set eventually becomes a natural number.

\begin{lem}\label{lem:everynumbersomewhere}
    For all $w\in W$ and  $X\subseteq D(w)$, there is a world $s$ such that $R(w,s)$ and $\#X\in I(\nn,s)$.
\end{lem}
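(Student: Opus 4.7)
The plan is to reduce the statement to a finite application of directedness, using Lemma~\ref{fact:nnatworld} to translate membership in $I(\nn,s)$ into membership of the relevant $\mathbf{a}_i$'s in $D(s)$. Set $n=|X|$; then by condition~\ref{jo}.5, $\#X=\mathbf{a}_n$. By Lemma~\ref{fact:nnatworld}, to guarantee $\mathbf{a}_n\in I(\nn,s)$ it suffices to find an accessible $s$ whose domain already contains all of $\mathbf{a}_0,\mathbf{a}_1,\dots,\mathbf{a}_n$, since then the first $k$ with $\mathbf{a}_k\notin D(s)$ is at least $n+1$, placing $\mathbf{a}_n$ in the enumerated initial segment $\{0,\mathbf{a}_1,\dots,\mathbf{a}_{k-1}\}$.

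Next I would use condition~\ref{jo}.5 to pick, for each $i\in\{0,1,\dots,n\}$, some world $w_i\in W$ with $\mathbf{a}_i\in D(w_i)$. I would then iterate directedness of $R$ (a straightforward induction: given an upper bound for $w,w_0,\dots,w_{i-1}$, pair it with $w_i$ to obtain an upper bound of $w,w_0,\dots,w_i$) to find a single world $s$ with $R(w,s)$ and $R(w_i,s)$ for every $i\leq n$. By condition~\ref{jo}.4 (together with the convention that $R$ is reflexive, so $D(w_i)\subseteq D(s)$ regardless of whether $w_i=s$), each $\mathbf{a}_i$ lies in $D(s)$.

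To conclude, apply Lemma~\ref{fact:nnatworld} at the world $s$: since $D(s)$ is finite, there is a least $k$ with $\mathbf{a}_k\notin D(s)$, and by construction $k\geq n+1>0$, so $I(\nn,s)=\{0,\mathbf{a}_1,\dots,\mathbf{a}_{k-1}\}$ contains $\mathbf{a}_n=\#X$, as required. The case $n=0$ is handled uniformly: we simply need $\mathbf{a}_0\in D(s)$, which is ensured by including $w_0$ in the upper-bound construction.

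There is no real obstacle here; the only small care needed is in the iterated use of directedness to bound a finite family of worlds simultaneously, and in noting that Lemma~\ref{fact:nnatworld}'s characterisation forces us to secure not just $\mathbf{a}_n$ but all the earlier $\mathbf{a}_i$'s in $D(s)$ — which is exactly why we collect $n+1$ witnessing worlds before invoking directedness.
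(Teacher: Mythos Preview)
Your argument is correct and is essentially a fully spelled-out version of the paper's own justification, which merely says ``This is because $\#$ is a function, first-order converse Barcan holds, and the accessibility relation is directed.'' Your use of Lemma~\ref{fact:nnatworld} makes explicit the one point the paper's hint glosses over, namely that securing $\#X\in I(\nn,s)$ requires placing not just $\mathbf{a}_n$ but all of $\mathbf{a}_0,\dots,\mathbf{a}_n$ into $D(s)$; otherwise the ingredients (totality of $\mathbf{a}$ from \ref{jo}.5, growth of domains from \ref{jo}.4, and iterated directedness) line up exactly with the paper's sketch.
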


\noindent This is because $\#$ is a function, first-order converse Barcan holds, and the accessibility relation is directed.  With these preliminary results we can now show our definitions satisfy a simple theory of arithmetic. 

\section{Proving Modalized Robinson's $\Q$}\label{sec:q}

In what follows we will prove that the modalized axioms of Robinson's $ \Q$ are true on all PI models (cf. Definition~\ref{pivalid}).  Robinson's $\Q$ is a weak theory of arithmetic that defines successor as an injective function that never returns 0 and gives a recursive definition of addition and multiplication. By ``modalized" we mean that we write ``necessarily for all" for ``for all" and ``possibly there is'' for ``there is''.  In other words, it is what results when we apply the Linnebo translation (mentioned in the introduction) to the axioms of Robinson's $\Q$.   The theory $\PAO$ is obtained by adding the mathematical induction schema to $\Q$.  We deal with $\PAO$ and the proof of the induction schema in Section~\ref{sec:iterp}.\footnote{A list of the non-modalized axioms can be found in Appendix~\ref{ap:theories}.  While what we show here is that these axioms are in the theory $\EPI$, each of the proofs that follow can be formalised in $\ACA$  (cf. Appendix~\ref{ap:piintoapa0}).  That this is possible will ensures that all axioms proven here are also in the theory $\PI$ (from Section~\ref{sec:mainresults}). This is a key point in the proof of Theorem~\ref{thm:main}.ii which we complete in section~\ref{sec:interpertation}.}

First we will show that our relations define the graphs of functions.  The easiest case is successor.

\begin{lem}[S1]\label{sucfunc}
$\vDash_{\mathsf{PI}}\Box\forall x,y,z\in\nn (( S xy\wedge S xz)\rightarrow y=z)$.
\end{lem}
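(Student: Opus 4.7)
The plan is to unpack the definition of $S$ and reduce the claim to elementary counting about cardinalities, using the rigidity of $\#$ guaranteed by condition~\ref{jo}.5.

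First I would fix an arbitrary PI model $\mathcal{M}=\langle W,R,D,I\rangle$, a world $w\in W$, an accessible world $s$ with $R(w,s)$, and elements $x,y,z\in D(s)$ satisfying $\momodels{s}{g}\nn x\wedge \nn y\wedge \nn z\wedge Sxy\wedge Sxz$. Unfolding the definition of $S$ yields accessible worlds $t_1, t_2$ with $R(s,t_1)$ and $R(s,t_2)$, together with witnesses $G_1\in\powerset(D(t_1))$ and $u_1\in G_1$ such that $y=\#G_1$ and $x=\#(G_1-\set{u_1})$, and similarly $G_2\in\powerset(D(t_2))$ and $u_2\in G_2$ with $z=\#G_2$ and $x=\#(G_2-\set{u_2})$.

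Next I would apply condition~\ref{jo}.5. Since $\#$ is interpreted rigidly as the function sending $X$ to $\mathbf{a}_{|X|}$, and since the $\mathbf{a}_i$ are pairwise distinct, the equality $\#(G_1-\set{u_1})=x=\#(G_2-\set{u_2})$ forces $|G_1-\set{u_1}|=|G_2-\set{u_2}|$. The worlds $t_1$ and $t_2$ have finite first-order domains by~\ref{jo}.2, so $G_1$ and $G_2$ are finite; because $u_i\in G_i$, removing a single element drops the cardinality by exactly one, so $|G_1|=|G_2|$. Applying~\ref{jo}.5 once more in the other direction gives $y=\#G_1=\mathbf{a}_{|G_1|}=\mathbf{a}_{|G_2|}=\#G_2=z$, as required.

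The hypothesis $x,y,z\in\nn$ in fact plays no essential role: one could invoke Lemma~\ref{fact:nnatworld} to locate $x$, $y$, $z$ explicitly among the $\mathbf{a}_i$, but the rigidity of $\#$ makes this detour unnecessary. There is no genuine obstacle here; the only point requiring care is that the witnessing pairs $(G_1,u_1)$ and $(G_2,u_2)$ may be drawn from distinct accessible worlds $t_1,t_2$, so one must resist the temptation to combine them inside a single world. The proof never needs to do so, because $\#$ returns the same $\mathbf{a}_i$ regardless of the world at which it is evaluated, and the metatheoretic counting argument goes through uniformly.
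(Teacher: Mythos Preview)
Your proof is correct but takes a different route from the paper's. The paper uses directedness to place both witnessing pairs inside a single common world $w$ with $R(s,w)$, obtains a bijection $g:X\to X'$ from Hume's Principle (since $\#X=x=\#X'$), extends it by one point to a bijection $X\cup\{a\}\to X'\cup\{b\}$, and concludes $y=z$ again via Hume's Principle. You instead bypass both directedness and bijection-building by appealing directly to the semantic clause~\ref{jo}.5: equal $\#$-values force equal metatheoretic cardinalities because the $\mathbf{a}_i$ are distinct, and finite arithmetic in the metalanguage then does the work. Your route is shorter and, as you note, shows that a common world is unnecessary here; the paper's route stays closer to the Fregean idiom of reasoning through Hume's Principle rather than leaning on the external $\mathbf{a}_i$ labelling, which matters more for the analogous arguments later (e.g.\ Lemma~\ref{add4}) where explicit bijections are composed.
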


\begin{proof}
Let $s\in W$ and $x,y,z\in I(\nn,s)$ satisfy the antecedent.  As $x$ is the predecessor in both relations it follows by directedness that there is a $w\in W$,  such that $R(s,w)$ where there are  $X,X'\subseteq D(w)$ and $\#X=x=\#X'$. As such there is a bijection $g:X\rightarrow X'$.  There will also be $a,b\in D(w)$ such that $a\notin X$, $b\notin X'$, and $y=\#X\cup\set{a}$ and $z=\#X'\cup\set{b}$. As $a\notin X$ and $b\notin X'$ we can construct $h$ such that for all $u\in X$, $h(u)=g(u)$ and $h(a)=b$.  Clearly $h$ is a bijection, so $y=\#X\cup\set{a}=\#X'\cup\set{b}=z$.
\end{proof}

\begin{lem}[S2]\label{phoibos}
$\vDash_{\mathsf{PI}}\Box\forall x\in\nn\Diamond\exists y\in\nn \; S xy$.
\end{lem}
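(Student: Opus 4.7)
The plan is to unpack the modalised statement and then exploit growth of domains together with directedness to produce the required successor witness.

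First, I would unpack the semantics. Fix a PI model $\mathcal{M}$ and a world $w$. A point $s$ with $R(w,s)$ is chosen, and an element $x \in D(s)$ with $\momodels{s}{g}\nn x$ is picked. The task is to find a world $t$ with $R(s,t)$ and some $y \in D(t)$ satisfying $\momodels{t}{g}\nn y \wedge S x y$. By Lemma~\ref{fact:nnatworld}, $x = \mathbf{a}_k$ for some $k \in \omega$, and in fact $\mathbf{a}_0,\dots,\mathbf{a}_k \in D(s)$.

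Next I would construct $t$. Since $\mathbf{a}$ maps $\omega$ into $\bigcup_{v \in W} D(v)$, there is some world $v$ with $\mathbf{a}_{k+1} \in D(v)$. Applying directedness (\ref{jo}.1) to $s$ and $v$, choose $t \in W$ with $R(s,t)$ and $R(v,t)$; by \ref{jo}.4 this gives $\mathbf{a}_{k+1} \in D(t)$, and also $\mathbf{a}_0,\dots,\mathbf{a}_k \in D(t)$ (either by monotonicity of domains, or equivalently by Lemma~\ref{lem:sstability} applied to $\nn x$ for each $\mathbf{a}_i$ with $i \leq k$). Set $y := \mathbf{a}_{k+1}$. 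Then $I(\nn,t)$ contains $\mathbf{a}_{k+1}$ by Lemma~\ref{fact:nnatworld}, so $\momodels{t}{g}\nn y$.

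Finally I would verify $\momodels{t}{g} S x y$. Recall $Sxy \equiv \Diamond\exists G,u[Gu \wedge y = \#G \wedge x = \#(G-\set{u})]$, so since $R$ is reflexive it suffices to exhibit the witnessing $G,u$ at $t$ itself. Take $G := \set{\mathbf{a}_0,\dots,\mathbf{a}_k} \subseteq D(t)$ and $u := \mathbf{a}_k \in G$; this uses \ref{jo}.3 to see that $G \in \powerset(D(t))$. Then $|G| = k+1$ gives $\#G = \mathbf{a}_{k+1} = y$, and $|G-\set{u}| = k$ gives $\#(G-\set{u}) = \mathbf{a}_k = x$, as required. (The edge case $k=0$ takes $G = \set{a}$ for any $a \in D(t)$ and $u = a$, using that $D(t)$ is non-empty by \ref{jo}.2 and that $\#\varnothing = 0$.)

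The only genuinely non-routine step is ensuring that $\mathbf{a}_{k+1}$ can be collected into the \emph{same} world as $x$ via an accessible move; this is exactly where directedness plus condition \ref{jo}.5 on the range of $\mathbf{a}$ do the work. Everything else is bookkeeping against Lemma~\ref{fact:nnatworld}.
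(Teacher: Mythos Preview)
Your proof is correct and follows essentially the same route as the paper: identify $x=\mathbf{a}_k$ via Lemma~\ref{fact:nnatworld}, use directedness together with condition~\ref{jo}.5 to reach an accessible world containing $\mathbf{a}_{k+1}$, and then exhibit the obvious witnessing set for $Sxy$. The only cosmetic differences are that the paper uses the alternative form $S'$ (adding $\mathbf{a}_n$ to $\set{\mathbf{a}_0,\dots,\mathbf{a}_{n-1}}$) and appeals to Lemmas~\ref{lem:sstability} and~\ref{lem:nnante1} rather than Lemma~\ref{fact:nnatworld} to conclude $y\in I(\nn,t)$; your explicit invocation of directedness is actually clearer than the paper's somewhat elliptical ``there must be a further world.''
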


\begin{proof}  As illustrated in Figure~\ref{fig:existsuccessor}, let $s\in W$ and $x\in I(\nn,s)$, it follows that $x=\mathbf{a}_n$ for some $n$ and, by Lemma~\ref{fact:nnatworld}, $\set{0,\dots\mathbf{a}_{n-1}}\subsetneq D(s)$.  Further, $\mathbf{a}_n=\#\set{0,\dots\mathbf{a}_{n-1}}$ and $\mathbf{a}_n\notin\set{0,\dots\mathbf{a}_{n-1}}$.  Thus, there must be a further world $w$ accessible from $w_1$ and a $y\in D(w)$ such that $y=\#\set{0,\dots\mathbf{a}_{n-1}}\cup\set{\mathbf{a}_n}$.  It follows that $Sxy$ at $w$.  By Lemma~\ref{lem:sstability} $x\in I(\nn,w)$. As $\nn$ is closed under successor by Lemma~\ref{lem:nnante1}, we have that $ y\in I(\nn,w)$. And since $R$ is transitive, $w$ is accessible from $s$. 
\end{proof}

\begin{figure}
\centering
\begin{subfigure}{.49\textwidth}
    \centering
    \resizebox{.8\textwidth}{!}{
    \begin{tikzpicture}
\draw (1,1) circle [radius=1];;
\draw (4,1) circle [radius=1];;
\draw [->] (2,1) -- (3,1);
\node at (2.4,1.25) {$\exists$};
\draw (7,1) circle [radius=1];;
\draw [->] (5,1) -- (6,1);
\node at (5.4,1.25) {$\exists$};
\draw (10,1) circle [radius=1];;
\draw [->] (8,1) -- (9,1);
\node at (8.4,1.25) {$\exists$};
\draw [->] (1,2) to [out=30,in=150] (10,2);
\node [above] at (1,1) {x};
\draw[fill] (1,1) circle [radius=0.025];
\node [above] at (10,1) {y};
\draw[fill] (10,1) circle [radius=0.025];
\node [above] at (7.5,0.5) {a};
\draw[fill] (7.5,0.5) circle [radius=0.025];
\draw (3.75,1.25) circle [radius=0.25];;
\node [above] at (3.75,1.5) {X};
\draw (6.75,1.25) circle [radius=0.25];;
\node [above] at (6.75,1.5) {X};
\node [below] at (4,0) {$x=\#X$};
\node [below] at (10,0) {$y=\#X\cup\{a\}$};
\end{tikzpicture}}
    \caption{Finding the successor of $x$}
    \label{fig:existsuccessor}
\end{subfigure}
    \begin{subfigure}{.49\textwidth}
    \centering
    \resizebox{.8\textwidth}{!}{
    \begin{tikzpicture}
\draw[thick,dashed] (1,1) circle [radius=1];;
\draw (1,.7) circle [radius=.5];;
\node [above] at (1,.4) {$Y_0$};
\draw (4,.7) circle [radius=.7];;
\draw [<->] (1.5,.7) -- (3.3,.7);
\node at (2.4,.9) {$g_0$};
\draw [<->] (1,1.6) -- (4,1.6);
\node at (2.4,1.8) {$o$};
\draw (7,1) circle [radius=1];;
\draw [<->] (4.65,1.2) -- (6,1.2);
\node at (5.4,1.4) {$f_0$};
\draw [<->] (4.6,.4) -- (6.2,.4);
\node at (5.4,.6) {$f_1$};
\draw (3.3,.7) -- (4.7,.7);
\draw[thick,dashed] (3.3,0.7) arc(190:-10:0.7cm and 1.25cm);
\draw (6,1) -- (8,1);
\node [above] at (1,1.6) {x};
\draw[fill] (1,1.6) circle [radius=0.025];
\node [above] at (4,1.6) {y};
\draw[fill] (4,1.6) circle [radius=0.025];
\node [above] at (3.75,.7) {$X_0$};
\node [above] at (3.75,.2) {$N$};
\node [above] at (6.75,1.2) {$X_1$};
\node [above] at (6.75,.2) {$N$};
\end{tikzpicture}}
    \caption{Proof of the recursion clause for addition}
    \label{fig:existadd}
\end{subfigure}
\caption{}
\end{figure}

\noindent These two proofs offer a general outline of the reasoning for addition and multiplication.  For S1 this strategy is to show that whatever $x$ is the sets assigned to $y$ and $z$ will have the same cardinality.  Where as for S2 one simply needs to construct a set of the correct cardinality. For this reason we do not give the proofs for the next four lemmas.

\begin{lem}[A1]\label{addfunc}
    $\vDash_{\mathsf{PI}}\Box\forall x,y,z,z'\in\nn(+(x,y,z)\wedge +(x,y,z')\rightarrow z=z')$.
\end{lem}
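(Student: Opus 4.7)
The statement is the analogue of S1 (Lemma~\ref{sucfunc}) for the ternary relation $+$, and the plan is to adapt the same two-step strategy from that proof: first use directedness to pull all relevant witnesses into a common world, and then assemble a single bijection that yields the desired cardinality equation.

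First I would unpack the definitions. Fixing a PI model $\mathcal{M}$ and a world $t$, let $x,y,z,z'\in I(\nn,t)$ satisfy $+(x,y,z)\wedge +(x,y,z')$ (the outer $\Box$ is handled by taking $t$ arbitrary). Unfolding Definition~\ref{addit}, the two conjuncts should yield accessible worlds $u_1,u_2$ of $t$ together with disjoint pairs $X,Y\subseteq D(u_1)$ and $X',Y'\subseteq D(u_2)$ satisfying $\#X=\#X'=x$, $\#Y=\#Y'=y$, $\#(X\cup Y)=z$, and $\#(X'\cup Y')=z'$. By directedness (condition~\ref{jo}.1) I would choose a world $u$ with $R(u_1,u)$ and $R(u_2,u)$; condition~\ref{jo}.4 then places all four sets inside $D(u)$. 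Because $\#$ is rigid and interpreted as metatheoretic cardinality (condition~\ref{jo}.5), the identities $\#X=\#X'$ and $\#Y=\#Y'$ force $|X|=|X'|$ and $|Y|=|Y'|$, so I would fix bijections $f:X\to X'$ and $g:Y\to Y'$.

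Next I would glue the bijections. Since $X\cap Y=\varnothing$ and $X'\cap Y'=\varnothing$, the piecewise map $h:X\cup Y\to X'\cup Y'$ given by $h(a)=f(a)$ for $a\in X$ and $h(a)=g(a)$ for $a\in Y$ is well-defined and bijective, so $|X\cup Y|=|X'\cup Y'|$, and condition~\ref{jo}.5 concludes $z=\#(X\cup Y)=\#(X'\cup Y')=z'$. The only step beyond what already appeared in the proof of S1 is this patching of $f$ and $g$, and it will work precisely because the definition of $+$ builds disjointness into each witnessing pair; I do not anticipate any real obstacle, as the rest is the same directedness-plus-rigidity bookkeeping used for S1.
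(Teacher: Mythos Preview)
Your proposal is correct and is precisely the argument the paper has in mind: the paper omits the proof of A1 but explicitly says it follows the S1 strategy of pulling all witnesses into a common world by directedness and then building a single bijection, and your two-step outline (directedness to a common $u$, then gluing $f$ and $g$ along the disjoint unions) carries this out exactly.
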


\begin{lem}[A2]\label{vergial}
    $\vDash_{\mathsf{PI}}\Box\forall x,y\in \nn\Diamond\exists z\in\nn \ +(x,y,z)$.
\end{lem}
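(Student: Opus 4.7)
The plan is to follow the same overall strategy as the proof of S2 (Lemma~\ref{phoibos}): given natural numbers $x, y$ at an arbitrary accessible world, we climb to a larger accessible world where disjoint sets witnessing their sum can be exhibited. Fix a PI model $\mathcal{M}$, a world $s$, and a world $t$ with $R(s, t)$; let $x, y \in I(\nn, t)$. By Lemma~\ref{fact:nnatworld}, there are $m, n \geq 0$ with $x = \mathbf{a}_m$, $y = \mathbf{a}_n$, and $\{\mathbf{a}_0, \ldots, \mathbf{a}_{\max(m,n)}\} \subseteq D(t)$.

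The first step is to produce a world $u$ with $R(t, u)$ and $\mathbf{a}_{m+n} \in I(\nn, u)$. For this, I would iterate S2: applied to $\mathbf{a}_{\max(m,n)} \in I(\nn, t)$, S2 yields an accessible world containing its successor, which by Lemma~\ref{sucfunc} together with the cardinality-based definition of $S$ must be $\mathbf{a}_{\max(m,n)+1}$. Repeating this $\min(m,n)$ times and using the transitivity of $R$ (since $R$ is a partial order) furnishes the desired $u$; along the way, Lemma~\ref{lem:sstability} ensures that $x, y$ remain natural numbers.

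At $u$, by Lemma~\ref{fact:nnatworld}, all of $\mathbf{a}_0, \ldots, \mathbf{a}_{m+n}$ lie in $D(u)$, so I would define $X = \{\mathbf{a}_0, \ldots, \mathbf{a}_{m-1}\}$ and $Y = \{\mathbf{a}_m, \ldots, \mathbf{a}_{m+n-1}\}$ (reading either as $\varnothing$ when the corresponding index is $0$). These are disjoint subsets of $D(u)$ with $\#X = \mathbf{a}_m = x$, $\#Y = \mathbf{a}_n = y$, and $\#(X \cup Y) = \mathbf{a}_{m+n}$ by condition~\ref{jo}.5. Setting $z = \mathbf{a}_{m+n} \in I(\nn, u)$ and using the reflexivity of $R$ at $u$ to discharge the $\Diamond$ inside the definition of $+$, we obtain $u \models {+}(x, y, z)$, which gives the required witness for $\Diamond \exists z \in \nn\; {+}(x, y, z)$ at $t$.

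The only substantive obstacle is arranging an accessible world whose domain is large enough to accommodate disjoint $X, Y$ of sizes $m$ and $n$ and in which $\mathbf{a}_{m+n}$ has already become a natural number. This is exactly what iterated S2 delivers: each application provides the next $\mathbf{a}_i$ and, via condition~\ref{jo}.4, forces the domain to grow strictly. The remaining verifications are purely combinatorial and mirror the reasoning from the proof of S2.
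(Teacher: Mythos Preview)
Your argument is correct and is in the spirit of the paper's treatment: the paper omits the proof of A2, remarking only that the strategy of S2 carries over—``one simply needs to construct a set of the correct cardinality.'' Your iteration of S2 to reach an accessible world $u$ with $\mathbf{a}_{m+n}\in I(\nn,u)$, followed by the explicit choice of disjoint witnesses $X=\{\mathbf{a}_0,\ldots,\mathbf{a}_{m-1}\}$ and $Y=\{\mathbf{a}_m,\ldots,\mathbf{a}_{m+n-1}\}$, implements that hint cleanly; the appeals to Lemma~\ref{fact:nnatworld}, Lemma~\ref{lem:sstability}, and reflexivity of $R$ are all in order.

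One small remark: in place of iterating S2 in the metatheory, you could reach $u$ in a single step by combining directedness with Lemma~\ref{lem:everynumbersomewhere}. Since $W$ is infinite and domains grow strictly along $R$, directedness yields an accessible world $v$ with $|D(v)|\geq m+n$; picking any $(m{+}n)$-element subset of $D(v)$ and applying Lemma~\ref{lem:everynumbersomewhere} then delivers an accessible $u$ with $\mathbf{a}_{m+n}\in I(\nn,u)$. This is marginally more direct but not materially different from what you wrote.
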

 
\begin{lem}[M1]\label{multfunc1}
    $\vDash_{\mathsf{PI}}\Box\forall x,y,z,z'\in\nn(\times(x,y,z)\wedge \times(x,y,z')\rightarrow z=z')$.
\end{lem}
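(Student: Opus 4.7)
The plan is to follow the same template as the proofs of Lemma~\ref{sucfunc}~(S1) and Lemma~\ref{addfunc}~(A1), but with richer combinatorial bookkeeping: the witnessing configuration for $\times$ is not merely a pair of sets but a relation carving out a family of pairwise disjoint fibres, so the required bijection must be pieced together fibre-by-fibre.

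I would fix an arbitrary world $s$ in a PI model and elements $x,y,z,z' \in I(\nn,s)$ satisfying $\times(x,y,z) \wedge \times(x,y,z')$. Unpacking the $\Diamond$ in Definition~\ref{multip} yields accessible worlds witnessing each conjunct; by directedness of $R$ (condition~\ref{jo}.1) I collapse them into a single $w$ with $R(s,w)$ carrying rigid witnesses $X,P$ for $\times(x,y,z)$ and $X',P'$ for $\times(x,y,z')$. So at $w$ we have $\#X = y = \#X'$, every fibre $\set{t\mid Put}$ for $u\in X$ and every fibre $\set{t\mid P'u't}$ for $u'\in X'$ has cardinality $x$, the fibres are pairwise disjoint on each side, and $z = \#\bigcup_{u\in X}\set{t\mid Put}$ while $z' = \#\bigcup_{u'\in X'}\set{t\mid P'u't}$.

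Next I would build a bijection between the two big unions in three stages. From $\#X = \#X'$ and Hume's Principle (condition~\ref{jo}.5) I pick a bijection $f : X \to X'$. For each $u \in X$, since $\#\set{t\mid Put} = x = \#\set{t\mid P'f(u)t}$, I pick a bijection $g_u$ from $\set{t\mid Put}$ onto $\set{t\mid P'f(u)t}$. I then define $h$ on $\bigcup_{u\in X}\set{t\mid Put}$ by $h(t) = g_u(t)$ whenever $u \in X$ and $Put$. The disjointness clause on the unprimed side makes $h$ well-defined and injective; the disjointness on the primed side, together with bijectivity of $f$ and of each $g_u$, forces surjectivity of $h$ onto $\bigcup_{u'\in X'}\set{t\mid P'u't}$. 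A final application of condition~\ref{jo}.5 then delivers $z = z'$.

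The main obstacle is precisely this gluing step: one must verify that the pointwise bijections $g_u$ genuinely combine into a single global bijection $h$, and this is the point at which \emph{both} disjointness clauses in Definition~\ref{multip} are used essentially (not just decoratively). Everything else reduces to routine applications of Hume's Principle. The argument is entirely finitary because every $D(w)$ is finite (condition~\ref{jo}.2), so no appeal to choice is needed and each $g_u$ can simply be picked explicitly.
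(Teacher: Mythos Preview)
Your proposal is correct and follows exactly the approach the paper intends: the paper omits the proof of M1, stating only that it follows the same strategy as S1 (show that the witnessing sets for $z$ and $z'$ must have the same cardinality), and your fibre-by-fibre gluing argument is precisely the natural way to execute that strategy for the definition of $\times$.
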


\begin{lem}[M2]\label{multfunc2}
    $\vDash_{\mathsf{PI}}\Box\forall x,y\in \nn\Diamond\exists z\in\nn \ \times(x,y,z)$.
\end{lem}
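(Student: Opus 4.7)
The plan is to follow the template of S2 and A2: explicitly construct witnesses to the existential in the definition of $\times$ in an accessible world. Fix $s \in W$ and $x,y \in I(\nn, s)$, and by Lemma~\ref{fact:nnatworld} write $x = \mathbf{a}_m$, $y = \mathbf{a}_n$ for unique metatheoretic naturals $m$ and $n$. The goal is to locate an $R$-successor $w$ of $s$ whose first-order domain contains all of $\mathbf{a}_0, \mathbf{a}_1, \ldots, \mathbf{a}_{mn}$, and in which to build a ``cartesian grid'' witnessing $\times(x,y,\mathbf{a}_{mn})$.

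For the choice of $w$: by condition \ref{jo}.5 each $\mathbf{a}_i$ lies in $D(s_i)$ for some $s_i \in W$; by directedness of $R$ applied to the finitely many worlds $s, s_0, s_1, \ldots, s_{mn}$ there is a common $R$-upper bound $w$, and condition \ref{jo}.4 then forces $\{\mathbf{a}_0, \ldots, \mathbf{a}_{mn}\} \subseteq D(w)$. Inside $w$, set $X = \{\mathbf{a}_0, \ldots, \mathbf{a}_{n-1}\}$, so that $\#X = \mathbf{a}_n = y$, and let $P \subseteq D(w)^2$ be the binary relation that holds at $(\mathbf{a}_i, \mathbf{a}_{mi+j})$ precisely when $0 \leq i < n$ and $0 \leq j < m$. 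Then each slice $\{v \mid P(\mathbf{a}_i, v)\} = \{\mathbf{a}_{mi}, \ldots, \mathbf{a}_{mi+m-1}\}$ has cardinality $m$, hence $\#$-value $x$; different slices are disjoint because their index ranges are; and the union is $\{\mathbf{a}_0, \ldots, \mathbf{a}_{mn-1}\}$, of cardinality $mn$ and $\#$-value $\mathbf{a}_{mn}$. Setting $z = \mathbf{a}_{mn}$ therefore makes the pair $X, P$ at $w$ witness $\times(x,y,z)$ at $w$ (so the $\Diamond$ in the definition of $\times$ is satisfied already at $w$ via reflexivity of $R$), and since $\mathbf{a}_i \in D(w)$ for all $i \leq mn$, Lemma~\ref{fact:nnatworld} places $z$ in $I(\nn, w)$.

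The main obstacle is not technically deep but needs some book-keeping: unlike in S2 and A2, where one or two additional elements sufficed, here the number of prescribed elements grows with the product $mn$, and directedness must be applied to collect all $mn + 1$ of them into a single accessible world. Degenerate cases go through uniformly with the same construction: if $m = 0$, each slice $\{v \mid P(\mathbf{a}_i, v)\}$ is empty and the union is empty, so $z = \mathbf{a}_0 = 0$; if $n = 0$, $X$ itself is empty, the universal is vacuous, and $z = 0$ as well; both reduce to requiring $0 = \mathbf{a}_0 \in D(w)$, which is the $i = 0$ instance of what has already been arranged.
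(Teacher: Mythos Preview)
Your proof is correct and follows exactly the approach the paper indicates: the paper does not spell out a proof of M2 but simply says the strategy is that of S2, namely ``construct a set of the correct cardinality,'' which is precisely what you do by assembling the grid $X,P$ inside a world containing $\mathbf{a}_0,\ldots,\mathbf{a}_{mn}$. Your explicit handling of directedness for finitely many worlds and of the degenerate cases $m=0$ and $n=0$ fills in details the paper leaves implicit.
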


We also need to show that $0$ meets the right conditions to be a constant.

\begin{lem}[Z1]\label{Korn}
$\vDash_{\mathsf{PI}}\Diamond\exists x\in\nn (x=0\wedge \Box\forall y(y=0\rightarrow y=x))$.
\end{lem}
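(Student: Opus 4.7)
The plan is to show that, starting at any world $w$ in any PI model, we can step along $R$ to a world where $0$ itself exists and serves as the unique witness.

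First, recall that the constant $0$ was defined as $\#\varnothing$, and by rigidity of $\#$ together with condition \ref{jo}.5, the denotation of $0$ is the fixed object $\mathbf{a}_0$. I would begin by fixing an arbitrary PI model $\mathcal{M}$ and world $w\in W$. By condition \ref{jo}.5, there is some world $s_0$ with $\mathbf{a}_0\in D(s_0)$; by directedness (condition \ref{jo}.1) there is an $s\in W$ with $R(w,s)$ and $R(s_0,s)$, and then by condition \ref{jo}.4 we obtain $\mathbf{a}_0\in D(s)$. This $s$ is the witness for the outer $\Diamond$.

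Next I would take $x := \mathbf{a}_0$ and verify that $\momodels{s}{g}\nn(x)\wedge x=0$. The conjunct $x=0$ is immediate. For $\nn(x)$, unpack Definition~\ref{nn}: we need $S^{+=}(0,x)$ and $\exists y(y=0)$. The existential holds because $\mathbf{a}_0\in D(s)$; and $S^{+=}(0,x)$ holds trivially by the right disjunct of Definition~\ref{wances}, since $x=0$.

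Finally, for the inner $\Box\forall y(y=0\rightarrow y=x)$, let $t$ be any world with $R(s,t)$ and let $y\in D(t)$. Because $0$ denotes $\mathbf{a}_0$ rigidly across worlds and $x$ was assigned to $\mathbf{a}_0$, the implication $y=0\rightarrow y=x$ reduces to $y=\mathbf{a}_0\rightarrow y=\mathbf{a}_0$, which is trivial. This completes the verification.

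The only real content here is step one, where directedness and the monotonicity of $D$ along $R$ combine to guarantee an accessible world containing $\mathbf{a}_0$; the rest is essentially bookkeeping enabled by the rigidity of $\#$ (and hence of $0$). Thus there is no genuine obstacle, and the lemma is established.
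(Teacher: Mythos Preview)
Your proof is correct and follows essentially the same strategy as the paper's: find a world where $0$ exists, observe that $0\in I(\nn,s)$ there, and handle the uniqueness clause by transitivity of identity (equivalently, rigidity of $0$). You are actually more careful than the paper, which simply asserts ``there is some $s$ with $0$ in the domain'' without explicitly securing $R(w,s)$; your use of directedness and monotonicity of domains to obtain an \emph{accessible} such $s$ fills that small gap.
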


\begin{proof}
By the definition of $\nn$, it follows that $0\in I(\nn, s)$ for any world $s$ with 0 in the domain.  And as $0=\#\varnothing$ there is some $s$ with 0 in the domain.  The second conjunct follows by the transitivity of identity.  
\end{proof}

\noindent We can now move on to the recursion equations in $\Q$.  We separate these into the base steps concerning 0 and the recursive step.  For the base steps, because $0=\#\varnothing$ the proofs of the lemmas are relatively straight forward.  As such we list them here without proof.  

\begin{lem}[Q1]\label{sucrec2}
$\vDash_{\mathsf{PI}}\neg\Diamond\exists x\in \nn (S x0 )$.
\end{lem}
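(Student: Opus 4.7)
The plan is to reduce $Sx0$ to the assertion that some nonempty set has cardinality $\#\varnothing$ and then derive a contradiction from the bookkeeping in condition \ref{jo}.5. Suppose for contradiction that there is a PI model $\modelm = \langle W, R, D, I \rangle$ and a world $s \in W$ with $\momodels{s}{g}\Diamond\exists x\in\nn(Sx0)$. Unpacking the diamond and the existential, pick an accessible world $w \in W$ and an element $x \in I(\nn, w)$ such that $\momodels{w}{g}Sx0$.

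Next I would unpack the definition (\ref{suc}) of $S$: there is some world $w'$ with $R(w, w')$, a second-order object $G \in \powerset(D(w'))$, and an element $u \in D(w')$ witnessing $Gu$, $0 = \#G$, and $x = \#(G - \set{u})$.

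The crucial step is to observe that since $0$ was defined as $\#\varnothing$, we have $\#G = \#\varnothing$, and by condition \ref{jo}.5 this forces $G$ to be empty. More explicitly, $\#\varnothing = \mathbf{a}_{|\varnothing|} = \mathbf{a}_0$, and the interpretation clause in \ref{jo}.5 gives $\#G = \mathbf{a}_{|G|}$, so the equality $\#G = 0$ becomes $\mathbf{a}_{|G|} = \mathbf{a}_0$. Since the function $\mathbf{a}$ is injective (distinct indices are sent to distinct elements of $D$), we conclude $|G| = 0$, i.e., $G = \varnothing$. This contradicts $Gu$, so no such $x$ exists.

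There is no real obstacle here; the entire proof rests on the injectivity of $\mathbf{a}$ together with the identification $0 = \#\varnothing$. The only care needed is to keep the modal bookkeeping straight when peeling off the two diamonds (the outer one from the lemma statement and the one hidden in the definition of $S$) and the first-order existential quantifiers, all of which are handled by picking accessible worlds and invoking the transitivity of $R$.
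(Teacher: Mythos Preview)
Your proof is correct and is exactly the straightforward argument the paper has in mind: the paper omits the proof of Q1 entirely, noting only that ``because $0=\#\varnothing$ the proofs of the lemmas are relatively straight forward,'' and your argument---forcing $G=\varnothing$ from $\#G=\#\varnothing$ via the injectivity of $\mathbf{a}$ in condition~\ref{jo}.5 and thereby contradicting $Gu$---is precisely that unwinding.
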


\begin{lem}[Q3]\label{addrec1}
    $\vDash_{\mathsf{PI}}\Box\forall x\in\nn \ +(x,0,x)$.
\end{lem}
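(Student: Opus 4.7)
The plan is to unpack the definition of $+(x,0,x)$ and simply exhibit the witnesses directly at the world of evaluation. Recall that $+(a,b,c)$ starts with a $\Diamond$, so we must find an accessible world carrying disjoint sets $X,Y$ with $\#X=a$, $\#Y=b$, and $\#(X\cup Y)=c$. Since $R$ is a partial order, it is reflexive, so it will suffice to produce such witnesses at the world $s$ itself.

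Fix an arbitrary PI model $\mathcal{M}$ and a world $s\in W$, and let $x\in I(\nn,s)$. By Lemma~\ref{fact:nnatworld}, there is some $n\geq 0$ with $x=\mathbf{a}_n$ (where we read $\mathbf{a}_0$ as $0$), and moreover $\{0,\mathbf{a}_1,\dots,\mathbf{a}_{n-1}\}\subseteq D(s)$. The plan is to choose
\[ X=\{0,\mathbf{a}_1,\dots,\mathbf{a}_{n-1}\}, \qquad Y=\varnothing, \]
both of which lie in $\powerset(D(s))$ and hence are in the range of the second-order quantifiers at $s$ by condition~\ref{jo}.3. (When $n=0$, set $X=\varnothing$; the argument is the same, and $\varnothing\in\powerset(D(s))$ because $D(s)$ is non-empty by \ref{jo}.2.)

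It then remains only to verify the four conjuncts of the matrix of $+(x,0,x)$. By condition~\ref{jo}.5 and the rigidity of $\#$, we have $\#X=\mathbf{a}_{|X|}=\mathbf{a}_n=x$ and $\#Y=\#\varnothing=\mathbf{a}_0=0$; moreover $X\cap Y=\varnothing$ and $X\cup Y=X$, so $\#(X\cup Y)=\#X=x$. Hence all four conjuncts hold at $s$, and reflexivity of $R$ at $s$ witnesses the $\Diamond$. Since $s$ and $x\in I(\nn,s)$ were arbitrary, this shows $\vDash_{\mathsf{PI}}\Box\forall x\in\nn\; +(x,0,x)$.

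There is no real obstacle here: the only thing to be careful about is that the witnessing sets must actually belong to the second-order domain of the world we pick, and this is what Lemma~\ref{fact:nnatworld} buys us — it guarantees that the canonical set of size $n$ already lives inside $D(s)$ whenever $\mathbf{a}_n\in I(\nn,s)$, so we never need to move to a later world. As noted in the footnote of Section~\ref{sec:q}, the argument is entirely finitary and the witnesses are explicitly defined, so the proof formalises straightforwardly in $\ACA$, which is what is needed for the result to sit in $\PI$ as well as $\EPI$.
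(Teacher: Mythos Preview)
Your proof is correct and is exactly the natural unpacking of what the paper intends: the paper omits the proof, noting only that the base steps are straightforward because $0=\#\varnothing$, and your choice of $Y=\varnothing$ together with the canonical size-$n$ set $X=\{0,\mathbf{a}_1,\dots,\mathbf{a}_{n-1}\}\subseteq D(s)$ obtained from Lemma~\ref{fact:nnatworld} is precisely how the paper handles the analogous existential witnesses in the proofs it does give (e.g.\ Lemma~\ref{phoibos}). There is nothing to add.
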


\begin{lem}[Q5]\label{multrec1}
    $\vDash_{\mathsf{PI}}\Box\forall x\in\nn \ \times(x,0,0)$.
\end{lem}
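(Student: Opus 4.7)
The plan is to unpack the definition of $\times(x,0,0)$ and show that the empty set, together with the empty binary relation, witnesses the existential inside the $\Diamond$. More precisely, fix an arbitrary PI model $\mathcal{M}$ and a world $s$; take any world $w$ with $R(s,w)$ and any $x \in I(\nn, w)$. We must verify that at $w$ there is an accessible world $t$ and witnesses $X \subseteq D(t)$, $P \subseteq D(t)^2$ satisfying the four conjuncts in Definition~\ref{multip} for parameters $(x, 0, 0)$.

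Since $x \in I(\nn, w)$, the second conjunct of Definition~\ref{nn} guarantees that $0 \in D(w)$. Using reflexivity of $R$, I would simply take $t := w$ and set both witnesses to be empty: $X := \varnothing$ and $P := \varnothing$. Then $\#X = \#\varnothing = 0$ by condition~\ref{jo}.5, and $0 \in D(w)$ as just noted, so the first conjunct holds. The two universally quantified conditions ranging over elements of $X$ are vacuously satisfied because $X$ is empty. Finally, the union $\bigcup_{x' \in X}\{y \mid Px'y\}$ is indexed over the empty set and hence is itself $\varnothing$, giving $\#\bigcup = 0$, which is the third conjunct.

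This establishes $\times(x,0,0)$ at $w$, and since $s$ and $w$ with $R(s,w)$ and $x \in I(\nn,w)$ were arbitrary, the full statement $\Box \forall x \in \nn\; \times(x,0,0)$ holds at $s$. Because $s$ and $\mathcal{M}$ were arbitrary, $\vDash_{\mathsf{PI}}\Box\forall x\in\nn \ \times(x,0,0)$ follows. There is no genuine obstacle here: in contrast to the base cases for successor and addition, the multiplication base case is entirely vacuous once one notices that the empty domain of summation collapses all nontrivial requirements on $P$ and reduces the target cardinality to $\#\varnothing = 0$. The only subtlety worth flagging is the mild point that $\nn$ at $w$ is nonempty only when $0 \in D(w)$, which is exactly what we need to interpret the constant $0$ in the equations.
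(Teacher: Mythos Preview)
Your proof is correct and is essentially the argument the paper has in mind: the paper omits the proof entirely, remarking only that the base cases Q1, Q3, Q5 are straightforward because $0=\#\varnothing$. Your proposal simply spells this out---taking $X=\varnothing$ and $P=\varnothing$ at an accessible world (reflexivity suffices) and observing that the two universal clauses are vacuous while $\#\varnothing=0$ handles the remaining cardinality equalities---so there is nothing to add beyond noting that your ``third conjunct'' should read ``fourth.''
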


\noindent What is left now is to show the recursion steps.  He we only prove the case for $+$ as one can use the same stratagy for $\times$ and the proof is simple for $S$.

\begin{lem}[Q2]\label{sucrec1}
$\vDash_{\mathsf{PI}}\Box\forall x,y,z\in\nn ((S xz\wedge S yz)\rightarrow x=y)$.
\end{lem}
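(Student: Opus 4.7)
The plan is to unfold the modal definition of $S$ for both $Sxz$ and $Syz$, use directedness of $R$ to move the two witnessing configurations into a single common world, and then finish by a cardinality comparison using condition~\ref{jo}.5.

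More concretely, fix a PI model $\mathcal{M}$, a world $s\in W$, and elements $x,y,z \in I(\nn,s)$ such that $Sxz$ and $Syz$ hold at $s$. Unfolding the definition \eqref{suc} at $s$ applied to $Sxz$ yields a world $w_1$ with $R(s,w_1)$ together with witnesses $G_1 \in \powerset(D(w_1))$ and $u_1 \in G_1$ for which $z = \#G_1$ and $x = \#(G_1 - \set{u_1})$. Analogously $Syz$ yields $w_2$, $G_2$, $u_2$ with $z = \#G_2$ and $y = \#(G_2 - \set{u_2})$. By directedness of $R$ (condition~\ref{jo}.1) there is a world $w \in W$ with $R(w_1,w)$ and $R(w_2,w)$; by the nesting of domains (condition~\ref{jo}.4) we have $G_1, G_2 \in \powerset(D(w))$ and $u_1, u_2 \in D(w)$, and because set variables are rigid these sets have the same membership at $w$ as at $w_1, w_2$.

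Now condition~\ref{jo}.5 gives $\#G_1 = \mathbf{a}_{|G_1|}$ and $\#G_2 = \mathbf{a}_{|G_2|}$, and the map $i \mapsto \mathbf{a}_i$ is injective, so from $\#G_1 = z = \#G_2$ we conclude $|G_1| = |G_2|$. Since $u_1 \in G_1$ and $u_2 \in G_2$, removing one element from each leaves $|G_1 - \set{u_1}| = |G_1| - 1 = |G_2| - 1 = |G_2 - \set{u_2}|$; applying condition~\ref{jo}.5 once more yields
\[
x = \#(G_1 - \set{u_1}) = \mathbf{a}_{|G_1|-1} = \mathbf{a}_{|G_2|-1} = \#(G_2 - \set{u_2}) = y,
\]
as desired.

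The main obstacle is essentially logistical rather than mathematical: the two uses of $S$ initially supply their witnesses in possibly distinct accessible worlds $w_1, w_2$, so we cannot directly compare $G_1$ and $G_2$ until we transport them to a common world. This is exactly what directedness is included in Definition~\ref{jo} to make possible, and combined with the rigidity of $\#$ (so that the equation $z = \#G_i$ transfers unchanged to $w$) and the metatheoretic triviality that equinumerous finite sets remain equinumerous after deleting one element apiece, the injectivity of successor on $\nn$ follows immediately. No use of induction or of Lemma~\ref{lem:sstability} is required here, although the latter could alternatively be invoked to simplify the book-keeping about where $x, y, z$ are natural numbers.
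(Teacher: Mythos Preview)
Your proof is correct. It differs from the paper's approach in one respect worth noting: the paper sketches a bijection-based argument (in the spirit of the proof of Lemma~\ref{sucfunc}), using Hume's Principle internally to pass from a bijection between the larger sets $G_1,G_2$ witnessing $z$ to a bijection between $G_1-\{u_1\}$ and $G_2-\{u_2\}$. You instead appeal directly to the semantic clause~\ref{jo}.5, reading off $|G_1|=|G_2|$ from the injectivity of $n\mapsto\mathbf{a}_n$ and then subtracting one in the metatheory. Your route is shorter and entirely legitimate given how the models are set up; the paper's route stays closer to the Fregean methodology of manipulating bijections rather than invoking metatheoretic cardinality, which keeps the argument uniform with the surrounding proofs and would transfer more readily to an axiomatic treatment. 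A minor observation: because $\#$ is rigid (Remark after Definition~\ref{jo}), the directedness step to a common world $w$ is not actually needed for your argument---the equations $\#G_i=\mathbf{a}_{|G_i|}$ hold globally once $G_i\subseteq D(w_i)$---though including it does no harm.
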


\noindent The proof simply follows from the fact that if there is a bijection between two sets $X$ and $Y$ then there will be a bijection between $X\cup\set{a}$ and $Y\cup\set{b}$ if $a$ and $b$ aren't in $X$ or $Y$ respectively.

\begin{lem}[Q4]\label{add4}
    $$\vDash_{\mathsf{PI}}\Box\forall n,x_0,x_1,y_0,y_1,z\in\nn(S(x_0,x_1)\wedge S(y_0,y_1)\wedge +(n,x_0,y_0)\wedge+(n,x_1,z)\rightarrow y_1=z).$$
\end{lem}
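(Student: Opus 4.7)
The plan is to unpack all four existential (diamond) witnesses, consolidate them into a single world by directedness, and then glue them together using the fact that successor is a function (Lemma~S1) and that addition is a function (Lemma~A1).

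Fix a world $s$ and elements $n,x_0,x_1,y_0,y_1,z \in I(\nn,s)$ satisfying the antecedent. By the definition of $+$, the clauses $+(n,x_0,y_0)$ and $+(n,x_1,z)$ each produce (possibly at distinct accessible worlds) disjoint pairs of sets witnessing the relation. By the definitions of $S$, the clauses $S(x_0,x_1)$ and $S(y_0,y_1)$ also produce witnesses at (possibly distinct) accessible worlds. Using directedness of $R$ (together with Lemma~\ref{lem:sstability} to keep the successor relations stable), I would move to a single world $w$ accessible from $s$ where \emph{all} of these witnessing sets simultaneously exist. In particular I obtain disjoint $X_0,Y_0 \subseteq D(w)$ with $\#X_0=n$, $\#Y_0=x_0$, and $\#(X_0\cup Y_0)=y_0$.

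The central construction, illustrated by Figure~\ref{fig:existadd}, is to build a witness for $+(n,x_1,\cdot)$ by adding one new element to $Y_0$. Since $D(w)$ is finite and $R$ is directed, I pass to a still larger accessible world $w'$ containing a fresh element $a \notin X_0\cup Y_0$. Then $X_0$ and $Y_0\cup\{a\}$ are disjoint; moreover $\#(Y_0\cup\{a\})$ is the successor of $\#Y_0=x_0$ in the sense of Definition~\ref{suc}, so by Lemma~S1 (\ref{sucfunc}) applied to the two successor witnesses of $x_0$, we get $\#(Y_0\cup\{a\})=x_1$. Similarly, $X_0\cup Y_0$ and $(X_0\cup Y_0)\cup\{a\}$ witness a successor relation at $y_0$, so by Lemma~S1 again, $\#\bigl((X_0\cup Y_0)\cup\{a\}\bigr)=y_1$.

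Finally, the pair $X_0$, $Y_0\cup\{a\}$ is disjoint with $\#X_0=n$, $\#(Y_0\cup\{a\})=x_1$, and $\#\bigl(X_0\cup(Y_0\cup\{a\})\bigr)=\#\bigl((X_0\cup Y_0)\cup\{a\}\bigr)=y_1$. This exhibits $+(n,x_1,y_1)$ at $w'$ (hence at $s$, since $+$ is defined with a leading $\Diamond$). Combined with the hypothesis $+(n,x_1,z)$ and Lemma~A1 (\ref{addfunc}), we conclude $y_1=z$. The main obstacle is the bookkeeping in the first paragraph: four separate $\Diamond$ witnesses need to be consolidated at a common world, and one must also ensure a genuinely fresh $a$ is available, which is precisely what directedness and the always-available finite strict extensions (condition~\ref{jo}.4) guarantee.
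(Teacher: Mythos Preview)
Your proof is correct, but it takes a genuinely different route from the paper's. The paper works directly with bijections: it uses the witness $Y_0$ from $S(y_0,y_1)$ (so $y_0=\#Y_0$ and $y_1=\#(Y_0\cup\{a\})$), the disjoint witnesses $N,X_0$ from $+(n,x_0,y_0)$, and the disjoint witnesses $N',X_1$ from $+(n,x_1,z)$, and then explicitly assembles a composite bijection $f\circ g:Y_0\cup\{a\}\to N'\cup X_1$ to conclude $y_1=z$ via Hume's Principle. Your argument is more modular: you build a single fresh witness for $+(n,x_1,y_1)$ from the witness for $+(n,x_0,y_0)$ by adjoining one point, and then appeal to the already-established functionality lemmas S1 and A1 as black boxes. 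Your approach is shorter and avoids the bijection bookkeeping; the paper's approach is more self-contained and is what Figure~\ref{fig:existadd} actually depicts (the figure is tailored to the bijection-chasing, not to your construction, so your reference to it is slightly misleading). One small caveat: S1 as stated is relativised to $\nn$, so when you invoke it to identify $\#(Y_0\cup\{a\})$ with $x_1$ you should note, via Lemma~\ref{lem:nnante1} and stability of $\nn$, that $\#(Y_0\cup\{a\})$ lands in $I(\nn,\cdot)$ at an accessible world; this is routine.
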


\begin{proof}
 As illustrated in Figure~\ref{fig:existadd}, let $s\in W$ and $n,x_0,x_1,y_0,y_1,z\in I(\nn,s)$ satisfy the antecedent.  We want to show that $y_1=z$.  By directedness, we know there is a world $w$ containing all the objects and sets which the antecedent states possibly exist.  As $y_1$ succeeds $y_0$ there is a set $Y_0$ and an object $a\notin Y_0$ at $w$ such that $y_1=\#Y_0\cup\set{a}$ and $y_0=\#Y_0$.  We know $y_0$ to be the addition of $n$ and $x_0$ so there are disjoint sets $N$ and $X_0$ such that $n=\#N$, $x_0=\#X_0$, and $y_0=\#N\cup X_0$.  Further there is a bijection $g_0:Y_0\rightarrow N\cup X_0$.  Now let $b$ be an element not in $N$ or $X_0$ (we can always pick $w$ so that such an element exists).  Clearly we can define a bijection $o$ between the singletons of $a$ and $b$.  Now, using $g_0$ and $o$, define the bijection $g:Y_0\cup\set{a}\rightarrow N\cup X_0\cup\set{b}$, as the union of $g_0$ and $o$. Now as $x_1$ is the successor of $x_0$, it follows that $x_1=\#X_0\cup\set{b}$.  As $z$ is the addition of $n$ and $x_1$ there are disjoint sets $N'$ and $X_1$ such that $n=\#N=\#N'$, $x_1=\#X_0\cup\set{b}=\#X_1$ and $z=\#N\cup X_1$.  As such there are bijections $f_0:X_0\cup\set{b}\rightarrow X_1$ and $f_1:N\rightarrow N'$.  So, we can define the bijection $f:N\cup X_0\cup\set{b}\rightarrow N'\cup X_1$ as $f_0$ on $X_0\cup\set{b}$ and $f_1$ on $N$.  Then as $z=\#N\cup X_1$ the composition $f\circ g$ is a bijection proving $y_1=z$.
\end{proof}

\begin{lem}[Q6]\label{stu}
    $\vDash_{\mathsf{PI}}\Box\forall n,x_0,x_1,y_0,y_1,z\in\nn(S(x_0,x_1)\wedge +(n,y_0,y_1)\wedge \times(n,x_0,y_0)\wedge\times(n,x_1,z)\rightarrow y_1=z)$.
\end{lem}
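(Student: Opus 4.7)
The plan is to mirror the proof of Q4 (Lemma~\ref{add4}), exploiting the fiber-bundle description of multiplication from Definition~\ref{multip}. First I fix a world $s$ and natural numbers $n,x_0,x_1,y_0,y_1,z \in I(\nn,s)$ satisfying the antecedent. Each of the four hypotheses contributes a $\Diamond$-witness, so by directedness of $R$, applied finitely many times to the accessed worlds, I pass to a single later world $w$ at which all the witnessing sets and auxiliary relations coexist; since $\#$ is rigid and second-order variables are interpreted rigidly, the cardinality and disjointness conditions claimed at each witness world remain in force at $w$.

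At $w$, the hypothesis $\times(n,x_1,z)$ supplies a set $X_1$ with $\#X_1=x_1$ and a relation $P_1$ whose fibers $\set{v\mid P_1uv}$, for $u \in X_1$, are pairwise disjoint, each of cardinality $n$, with union of cardinality $z$. Since $x_1$ succeeds $x_0$ we have $x_1 \geq 1$, so I pick some $a \in X_1$ and set $X_1^{\ast} = X_1 \setminus \set{a}$, giving $\#X_1^{\ast} = x_0$. Similarly $\times(n,x_0,y_0)$ yields a set $X_0$ with $\#X_0 = x_0$ and a relation $P_0$ whose disjoint $n$-sized fibers total $y_0$. I then fix a bijection $h : X_0 \to X_1^{\ast}$ and, for each $u \in X_0$, a bijection $g_u$ from $\set{v\mid P_0uv}$ onto $\set{v\mid P_1h(u)v}$; because the fibers on each side are pairwise disjoint, the $g_u$ assemble into a single bijection $G$ from $\bigcup_{u \in X_0}\set{v\mid P_0uv}$ onto $\bigcup_{u \in X_1^{\ast}}\set{v\mid P_1uv}$.

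Finally, the witness for $+(n,y_0,y_1)$ provides disjoint sets $N$ and $Y_0$ at $w$ with $\#N = n$, $\#Y_0 = y_0$, and $\#(N \cup Y_0) = y_1$. I fix bijections $\alpha : Y_0 \to \bigcup_{u \in X_0}\set{v\mid P_0uv}$ and $\beta : N \to \set{v\mid P_1av}$ (each between equinumerous sets). Because $\bigcup_{u \in X_1}\set{v\mid P_1uv}$ is the disjoint union of $\bigcup_{u \in X_1^{\ast}}\set{v\mid P_1uv}$ and $\set{v\mid P_1av}$, and because $N$ and $Y_0$ are disjoint, the map $\beta \cup (G \circ \alpha)$ is a bijection from $N \cup Y_0$ onto $\bigcup_{u \in X_1}\set{v\mid P_1uv}$, witnessing $y_1 = z$. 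The mathematical content is light; the main obstacle is the bookkeeping of pulling four $\Diamond$-witnesses together at one world and verifying that the construction survives the degenerate case $x_0 = 0$, in which $X_0 = \emptyset$, $y_0 = 0$, $X_1 = \set{a}$, and $\alpha$ and $G$ collapse to empty maps while $\beta$ alone carries the bijection---a case that goes through smoothly on inspection.
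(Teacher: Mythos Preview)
Your argument is correct and follows essentially the same strategy as the paper's sketch: decompose the $x_1$-indexed fiber bundle as $x_0$ many fibers plus one extra fiber of size $n$, and match these respectively against the $y_0$-witness and $N$. The only organizational difference is that you remove a point $a$ from the $x_1$-base $X_1$ whereas the paper adjoins a point $u$ to the $x_0$-base $A_0$ with new fiber $N$; your version is arguably slightly cleaner since the needed disjointness comes for free from the $P_1$-fibers rather than requiring $N$ to be chosen disjoint from the $P$-fibers.
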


\noindent This proof is similar to the above except we end up showing that $y_1=\#\bigcup_{x\in A_0\cup\set{u}}\set{y\mid Pxy\vee (x=u\wedge y\in N)}=\#\bigcup_{x\in A_1}\set{y\mid Txy}=z$ where $A_0$, $A_1$, and $N$ are of cardinality $x_0$, $x_1$, and $n$ respectively and $P$ is the relation given by $\times(n,x_0,y_0)$ and $T$ by $\times(n,x_1,z)$.

These results show that we have successfully defined a modalized version of Robinson's $\Q$ in our system.  The next section will recover a modalized induction schema.

\section{Proving the Modalized Induction Schema}\label{sec:iterp}

We have succeeded in giving a weak theory of arithmetic in a potentially infinite setting.  However, we can recover more arithmetic by proving that when restricted to appropriate formulas a modalized version of the induction schema is true on all PI models.  The modalized induction schema is:
\begin{equation}
    [\varphi(0)\wedge\Box\forall x,y\in\nn(\varphi(x)\wedge S(x,y)\rightarrow \varphi(y))]\rightarrow\Box\forall x\in\nn \ \varphi(x)
\end{equation}

\noindent Modalized induction does not hold for all formulas in our models, as will be shown in Lemma~\ref{lem:nonstablenoinduct}. So, we need to define a subclass of the formulas in the language of potentially infinite models for which it does hold. These we will call the inductive formulas, and in Lemma~\ref{carl} it will be proven that induction does hold for inductive formulas.\footnote{This terminology is used to distinguish between these formulas and other for which induction does not hold.  Hopefully no confusion will be caused by the distinct uses of the term inductive formulas elsewhere in the literature.}

\begin{definition}\label{def:inductiveformulas}
The \emph{inductive terms} and \emph{formulas} are defined recursively as follows:  
\begin{enumerate}
    \item An inductive term is either 0 or a first-order variable. 
    \item If $t_0,t_1,t_2$ are inductive terms then $t_0=t_1$, $S(t_0,t_1)$, $+(t_0,t_1,t_2)$ and $\times(t_0,t_1,t_2)$ are inductive formulas. 
    \item Applications of the propositional connectives to inductive formulas are inductive formulas.
    \item If $\varphi$ is an inductive formula then $\Box\forall x\in\nn \ \varphi$ and $\Diamond\exists x\in\nn \ \varphi$ are inductive formulas.
\end{enumerate}    
\end{definition}

\noindent The inductive terms and formulas are a subset of the terms and formulas respectively.  Any term of the form $\#X$ is not an inductive term, and indeed no term or formula with a free second-order variable is inductive.  Likewise $\nn0$, $\forall z (x=z)$ and $\exists y (S0y)$ are not inductive formulas,  while $\Box\forall z\in\nn(x=z)$ and $\Diamond\exists y\in\nn (S0y)$ are.

A formula $\varphi$  is \textit{stable} when: 
\begin{equation}\label{stable}
    \vDash_{\mathsf{PI}}\varphi\rightarrow\Box\varphi.
\end{equation}

\noindent Stability is taken from Linnebo's (\citeyear[211]{Linnebo2013}) work on set theory in a modal setting.  It means once a formula has been made true it stays true.    As we saw in Lemma~\ref{lem:sstability}, $S$, $S^+,$ $S^{+=},$ and $\nn$ are all stable and an example of an unstable formula is $\neg\nn$. Fortunately, the inductive formulas all have the property of being stable, as we will now prove. This will allow us to prove induction for these formulas.

\begin{lem}\label{maire}
    If $\varphi$ is an inductive formula then $\vDash_{\mathsf{PI}}\varphi\rightarrow\Box\varphi$.
\end{lem}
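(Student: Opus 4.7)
The plan is to proceed by induction on the construction of inductive formulas, but with a strengthened inductive invariant. Mere stability does not interact well with negation (from $\varphi \to \Box\varphi$ one does not get $\neg\varphi \to \Box\neg\varphi$), yet the class of inductive formulas is closed under negation. So I would actually prove the stronger claim that every inductive formula $\varphi$ is \emph{decided}, meaning both $\vDash_{\mathsf{PI}} \varphi \to \Box\varphi$ and $\vDash_{\mathsf{PI}} \neg\varphi \to \Box\neg\varphi$ hold; the lemma as stated then falls out as the first conjunct.

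For the atomic base cases, the rigidity of first-order variables and of $0$ makes identities $t_0 = t_1$ decided at once. For $S(t_0,t_1)$, stability is exactly Lemma~\ref{lem:sstability}, and anti-stability reduces to $\Diamond S(x,y) \to S(x,y)$, which holds by transitivity of $R$ together with the fact that $S$ is itself defined with a leading $\Diamond$. The cases of $+$ and $\times$ work in the same way because both are defined as $\Diamond\exists$-formulas whose matrix refers only to rigidly-interpreted second-order variables and the rigid operator $\#$. Here stability of a $\Diamond\psi$-formula uses directedness: given a witness world $v$ above the evaluation world $w$ and any other $w'$ accessible from $w$, directedness produces a common extension $u$, and the witness sets and objects (being rigid) persist to $u$, so $\Diamond\psi$ holds at $w'$ as well.

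The propositional step is immediate once the invariant is phrased as decidedness, because the class of decided formulas is visibly closed under $\neg$, $\wedge$, $\vee$, $\to$: stability distributes over conjunction and disjunction, and the negation of a decided formula is decided by swapping the two clauses. For the modal-quantifier cases, $\Box\forall x \in \nn \, \varphi$ is stable by transitivity of $R$, and its negation $\Diamond\exists x \in \nn \, \neg\varphi$ is stable by the directedness argument above, relying on the inductive hypothesis that $\neg\varphi$ is stable together with the stability of $\nn$ from Lemma~\ref{lem:sstability}, so that any witness remains a natural number at the common extension. The case of $\Diamond\exists x \in \nn \, \varphi$ is dual: stability by directedness and anti-stability by transitivity.

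The main obstacle is precisely the mismatch between the invariant the lemma asserts (stability) and the invariant required to push the induction through negation (decidedness); strengthening the claim is the essential move, and I expect the referee to want this spelled out. After that, each step is a direct appeal to one of the structural features of PI models: directedness of $R$ gives stability of $\Diamond$-formulas, transitivity of $R$ gives stability of $\Box$-formulas and anti-stability of $\Diamond$-formulas, and the rigidity of the second-order domain and of $\#$ lets existential witnesses persist across worlds. No subtle interplay between the cases is needed.
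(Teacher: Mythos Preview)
Your proposal is correct and essentially identical to the paper's proof: the paper also strengthens the induction hypothesis to prove both $\varphi\rightarrow\Box\varphi$ and $\Diamond\varphi\rightarrow\varphi$ simultaneously, which is just the contrapositive form of your ``decidedness'' condition $\neg\varphi\rightarrow\Box\neg\varphi$. The base cases and the quantifier step are handled by the same ingredients you cite (rigidity for atoms, transitivity for collapsing $\Diamond\Diamond$, directedness plus Lemma~\ref{lem:sstability} for pushing witnesses to a common extension).
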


\begin{proof} 
    In what follows we prove by induction on the complexity of the inductive formulas that both $\varphi\rightarrow\Box\varphi$ and $\Diamond\varphi\rightarrow\varphi$.  The second condition is included to deal with the case of negation. 

    Base case:      $x=y$ and $x=0$:  The result follows from the evaluation of $\#\varnothing$ being rigid and the identity relation being interpreted as the identity from the metalanguage.      Note that for $S, +,$ and $\times$ that $\Diamond\psi\rightarrow\psi$ follows simply because $R$ is transitive and they start with a $\Diamond$.  $S(x,y)$: See Lemma~\ref{lem:sstability}.    $+(x,y,z)$: Assume that $\momodels{w}{g}+(a,b,c)$.  It follows that there exists a world $w'$ accessible from $w$ and nonintersecting sets $A,B\subseteq{D(w')}$ satisfying $+$.  Let $s$ be a world such that $R(w,s)$. Then by directedness, there is a world $s'$ such that $R(s,s')$ and $R(w',s')$, and $A,B\subseteq{D(s')}$.  So $+(a,b,c)$ holds at $s$.     $\times(x,y,z)$:  The reasoning is essentially the same as that used for $+$.

    Now we proceed to the induction step. We will only show the case of the quantifier as $\neg$ and $\wedge$ proceed as one would expect. $\Diamond\exists x\in\nn \ \psi$: Assume $\momodels{s}{g}\Diamond\Diamond\exists x\in\nn \ \psi$. It follows by transitivity that $\momodels{s}{g}\Diamond\exists x\in\nn \ \psi$.  Now we show that $(\Diamond\exists x\in\nn \ \psi)\rightarrow(\Box\Diamond\exists x\in\nn \ \psi)$.  First take a world $w$ such that $\Diamond\exists x\in\nn \ \psi$ holds at $w$. Then take worlds $s,w'$ such that $R(w,s)$, $R(w,w')$, $\exists x\in\nn \ \psi$ holds at $w'$ and we want to show $\Diamond\exists x\in\nn \ \psi$ holds at $s$.  At $w'$ there is an $a\in D(w')$ such that $a\in I(\nn,w')$ and $\psi(a)$ holds at $w'$.  So, by Lemma~\ref{lem:sstability}, $\nn a\rightarrow\Box\nn a$ holds at $w'$ and by the induction hypothesis, $\psi(a)\rightarrow\Box\psi(a)$.  Let $s'$ be such that $R(s,s')$ and $R(w',s')$, such a world exists by directedness.  It follows that $\nn a$ and $\psi(a)$ hold at $s'$ and as $s'$ is accessible from $s$ we have proven $\Diamond\exists x\in\nn \ \psi$ holds at $s$. 
\end{proof}

\noindent We can now prove that the modalized induction schema holds for all inductive formulas.  We do this by showing the more general result that induction holds for all stable formulas.

\begin{lem}\label{carl}
    If $\varphi$ is stable, then $$\vDash_{\mathsf{PI}}[\varphi(0)\wedge\Box\forall x,y\in\nn(\varphi(x)\wedge S(x,y)\rightarrow \varphi(y))]\rightarrow\Box\forall x\in\nn \ \varphi(x).$$
\end{lem}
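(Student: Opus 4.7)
My plan is to reduce the modalized induction schema to ordinary mathematical induction in the metatheory, exploiting the fact that at any given world the extension of $\nn$ is a finite initial segment of the $\mathbf{a}_i$'s, as secured by Lemma~\ref{fact:nnatworld}. Fix a PI model $\modelm = \langle W,R,D,I\rangle$ and a world $w$ at which the antecedent of the induction instance holds. I need to show that for every $s$ with $R(w,s)$ and every $a \in I(\nn,s)$, we have $\modelm, s, g \vDash \varphi(a)$.

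Fix such an $s$. By Lemma~\ref{fact:nnatworld}, $I(\nn,s) = \{0,\mathbf{a}_1,\dots,\mathbf{a}_{n-1}\}$ for some $n \geq 0$ (the case $n=0$ being trivial). I then induct in the metatheory on $i < n$ to show $\modelm, s, g \vDash \varphi(\mathbf{a}_i)$. For the base case $i = 0$, the antecedent gives $\modelm, w, g \vDash \varphi(0)$; since $\varphi$ is stable, $\modelm, w, g \vDash \Box\varphi(0)$, and as $R(w,s)$ we obtain $\varphi(0)$ at $s$. For the inductive step, suppose $\varphi(\mathbf{a}_i)$ holds at $s$ and $i+1 < n$. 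The antecedent's second conjunct $\Box\forall x,y\in\nn(\varphi(x)\wedge S(x,y)\rightarrow\varphi(y))$ at $w$ transmits via $R(w,s)$ to the corresponding unboxed universal statement at $s$; since both $\mathbf{a}_i$ and $\mathbf{a}_{i+1}$ lie in $I(\nn,s)$, I can instantiate it to conclude $\varphi(\mathbf{a}_{i+1})$ at $s$, provided $S(\mathbf{a}_i,\mathbf{a}_{i+1})$ holds at $s$.

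The one small piece of bookkeeping is verifying $\modelm, s, g \vDash S(\mathbf{a}_i,\mathbf{a}_{i+1})$. Unpacking Definition~(\ref{suc}), I need a world $s'$ accessible from $s$ containing a set $G$ of metatheoretic cardinality $i+1$ and an element $u \in G$ with $\#G = \mathbf{a}_{i+1}$ and $\#(G\setminus\{u\}) = \mathbf{a}_i$. By Lemma~\ref{phoibos} applied at $s$, such a witnessing world exists; alternatively, by condition~\ref{jo}.4 and directedness I can pass to an accessible $s'$ whose domain has at least $i+1$ elements and directly exhibit $G$, using the rigidity of $\#$ to ensure the cardinality values are the right $\mathbf{a}_j$'s. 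The main (and essentially only) obstacle is the conceptual one of recognising that stability of $\varphi$ is exactly what is needed to replace the truly higher-order induction of $\PAT$ by a metatheoretic finite induction along the initial segment produced by Lemma~\ref{fact:nnatworld}; without stability, the hypothesis $\varphi(\mathbf{a}_i)$ established at one stage of the metatheoretic induction could not be reused at the next, and the argument would collapse, as foreshadowed by the forthcoming Lemma~\ref{lem:nonstablenoinduct}.
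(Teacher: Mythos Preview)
Your argument is correct, but it takes a different route from the paper's. The paper works entirely at the object level: having fixed $s$ with $R(w,s)$ and $a\in I(\nn,s)$, it instantiates the second-order quantifier hidden in the ancestral $S^{+}0a$ with the set $\{y\in D(s)\mid \varphi(y)\wedge\nn y\}$, checks that this set satisfies the two closure conditions in the antecedent of Definition~\ref{sances} (using Lemmas~\ref{lem:nnante1} and~\ref{lem:nnante2} together with stability of $\varphi$ for the base), and concludes $\varphi(a)\wedge\nn a$. You instead invoke the structural description of $I(\nn,s)$ from Lemma~\ref{fact:nnatworld} and run a finite metatheoretic induction along $\mathbf{a}_0,\dots,\mathbf{a}_{n-1}$ at the single world $s$. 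Both are valid; the paper's approach makes explicit that the Fregean ancestral definition is doing the work, while yours is arguably more transparent and shorter once Lemma~\ref{fact:nnatworld} is in hand.

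One small correction to your closing commentary: in your own argument stability is used only once, in the base case, to transport $\varphi(0)$ from $w$ to $s$. The inductive step stays entirely at $s$, so there is no need to ``reuse $\varphi(\mathbf{a}_i)$ at the next stage'' across worlds. Your diagnosis that ``without stability the hypothesis $\varphi(\mathbf{a}_i)$ established at one stage could not be reused at the next'' therefore mislocates the role of stability; the counterexample in Lemma~\ref{lem:nonstablenoinduct} fails precisely because $\forall z(z=0)$ is not stable and so does not survive the move from $w=\mathbf{0}$ to $s=\mathbf{1}$, not because anything goes wrong between successive $\mathbf{a}_i$'s at a fixed $s$.
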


\begin{proof}
     Let $w$ be a world. Further, we assume the antecedent of the induction schema holds so let $\varphi(0)$ and  $\Box\forall x,y\in\nn(\varphi(x)\wedge S(x,y)\rightarrow \varphi(y))$ hold at $w$.  Let $s$ be a world accessible from $w$ and let $a\in I(\nn,s)$.  We will show that $\varphi(a)$ at $s$.  If $a=0$ then, as $\varphi$ is stable, we are done so assume not. 
     
     As $a\in I(\nn,s)$, if we prove $\forall x,y(\varphi(x)\wedge\nn x\wedge S(x,y)\rightarrow\varphi(y)\wedge\nn y)$ and $\forall x(S(0,x)\rightarrow\varphi(x)\wedge\nn x)$ hold at $s$ then we have satisfied the antecedent of $S^+0a$ and so it follows that $\varphi (a)\wedge\nn a$ at $s$.  
     
     At $s$ we have $\forall x,y\in\nn(\varphi(x)\wedge S(x,y)\rightarrow \varphi(y))$.  We also have that if $x\in I(\nn,s),$ and $S(x,y)$ hold at $s$ then by Lemma~\ref{lem:nnante1} that $y\in I(\nn,s)$.  This proves $\forall x,y(\varphi(x)\wedge\nn x\wedge S(x,y)\rightarrow\varphi(y)\wedge\nn y)$ at $s$.
     
    From $a\in I(\nn,s)$ it follows that $0\in D(s)$.  Assume $x\in D(s)$ and $S0x$, as $0\in D(s)$ it follows by Lemma~\ref{lem:nnante2} that $x\in I(\nn,s)$.  It then follows by the stability of $\varphi$ that $\varphi(0)$ at $s$. As such we have the antecedent of $\forall x,y\in\nn(\varphi(x)\wedge S(x,y)\rightarrow \varphi(y))$ so we get $\varphi(x)$.  And from this it follows that $\forall x(S(0,x)\rightarrow\varphi(x)\wedge\nn x)$ holds at $s$.
\end{proof}  

\noindent So we have proven the modalized induction axiom restricted to inductive formulas.  But we cannot prove modalized induction for all formulas in the language of potentially infinite models, as the following counterexample shows.  

\begin{lem}\label{lem:nonstablenoinduct} If $\varphi(x)$ is $\forall z(z=x)$, then
$$\nvDash_{\mathsf{PI}}[\varphi(0)\wedge\Box\forall x,y\in\nn(\varphi(x)\wedge S(x,y)\rightarrow \varphi(y))]\rightarrow\Box\forall x\in\nn \ \varphi(x).$$
\end{lem}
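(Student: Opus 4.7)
The plan is to exhibit a single countermodel: take the minimal PI model of Example~\ref{exmp:min} and evaluate at the base world $\mathbf{0}$, where $D(\mathbf{0})=\{\mathbf{0}\}$. The chosen $\varphi(x)\equiv\forall z(z=x)$ is visibly unstable (so Lemma~\ref{maire} does not apply to it), and this instability is precisely what I intend to exploit.

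First I would verify the antecedent at $\mathbf{0}$. The first conjunct $\varphi(0)=\forall z(z=0)$ holds at $\mathbf{0}$ because $D(\mathbf{0})=\{\mathbf{0}\}$ is a singleton containing only $0=\#\varnothing$. For the second conjunct $\Box\forall x,y\in\nn(\varphi(x)\wedge S(x,y)\rightarrow\varphi(y))$, I would check that the inner implication is vacuously true at every accessible world $\mathbf{n}$. At $\mathbf{0}$ itself, by Lemma~\ref{fact:nnatworld} we have $I(\nn,\mathbf{0})=\{0\}$, so the only candidate is $x=y=0$; but then $S(0,0)$ is excluded by Lemma~\ref{sucrec2} (Q1), so the antecedent fails. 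At any $\mathbf{n}$ with $n\geq 1$, the domain $D(\mathbf{n})=\{\mathbf{0},\dots,\mathbf{n}\}$ contains at least two elements, so $\varphi(x)=\forall z(z=x)$ fails outright for \emph{every} $x\in D(\mathbf{n})$, making the antecedent of the inner conditional false and the implication vacuously true.

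Next I would show the consequent $\Box\forall x\in\nn\,\varphi(x)$ fails at $\mathbf{0}$. Since $R(\mathbf{0},\mathbf{1})$, it suffices to find a world accessible from $\mathbf{0}$ witnessing the failure of $\forall x\in\nn\,\varphi(x)$. At $\mathbf{1}$, Lemma~\ref{fact:nnatworld} gives $0\in I(\nn,\mathbf{1})$, but $\varphi(0)=\forall z(z=0)$ fails at $\mathbf{1}$ because $\mathbf{1}\in D(\mathbf{1})$ and $\mathbf{1}\neq 0$. Hence the consequent is false at $\mathbf{0}$ while the antecedent is true, falsifying the modalized induction instance.

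There is no real obstacle here; the construction is essentially a single-point check. The conceptual point I would emphasize in the writeup is that the failure of induction for $\varphi$ is directly traceable to $\varphi$ not being stable: $\varphi(0)$ holds at $\mathbf{0}$ but is lost at $\mathbf{1}$, so the ``ground'' of induction evaporates the moment we move to an accessible world, while the inductive step survives only vacuously. This explains both why Lemma~\ref{carl} requires stability and why the restriction to inductive formulas in Definition~\ref{def:inductiveformulas} (which by Lemma~\ref{maire} are automatically stable) cannot be dropped.
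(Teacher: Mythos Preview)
Your proposal is correct and follows essentially the same approach as the paper: the minimal model at world $\mathbf{0}$, verification of both conjuncts of the antecedent there, and the failure of the consequent witnessed at world $\mathbf{1}$. The only cosmetic difference is in the verification of the inductive step: you split into the cases $\mathbf{n}=\mathbf{0}$ (appealing to Q1) and $\mathbf{n}\geq\mathbf{1}$ (vacuity via failure of $\varphi(x)$), whereas the paper gives a uniform one-line argument---from $\forall z(z=x)$ and $S(x,y)$ one gets $y=x$ and hence $\forall z(z=y)$---which dispatches all accessible worlds at once without cases.
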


\begin{proof}
It is sufficient to show there is a model and a world in the model where this statement is false.  Take the minimal model from Example~\ref{exmp:min} and world \textbf{0}, where $D(\mathbf{0})=\set{\mathbf{0}}$.  Clearly $\momodels{\mathbf{0}}{g}\forall z(z=0)$.  Let $w\in W$ be such that $R(\mathbf{0},w)$ and assume that for all $x,y\in I(\nn,w)$, that $\forall z(z=x)$ and $ S(x,y)$ hold at $w$.   As everything in the domain is equal to $x$ it follows that $y=x$ and so $\forall z(z=y)$ at $w$.  So $\momodels{\mathbf{0}}{g}\Box\forall x,y\in\nn(\forall z(z=x)\wedge S(x,y)\rightarrow \forall z(z=y))$.  But it does not follow that $\Box\forall x\in\nn \ \forall z(z=x)$, because $1\in W$  is a counterexample as $D(\mathbf{1})=\set{\mathbf{0},\mathbf{1}}$.
\end{proof}

\section{Proof of Theorem~\ref{thm:main}}\label{sec:interpertation}

We now have almost all the pieces needed to prove Theorem~\ref{thm:main}.   However, before we do that we need to discuss what a translation and interpretation are in our setting because we are moving between logics.  

Intuitively, a \emph{translation} between two languages starts with instructions on how to rewrite atomic formulas in one language into the other language. It does not make any changes to the propositional connectives but can restrict the quantifiers to objects meeting some conditions.  In the current setting, however, we need a formal definition of what is to count as a translation when the underlying logics are different.  This notion should, at the very least, capture the Linnebo translation.   We offer the following definition as a minimal condition on any translation, though more will need to be done to ensure a widely applicable definition of translation and interpretation between logics.

\begin{definition}\label{def:verifiabletranslation}
   Let $L_A$ and $L_B$ be two logics extending first-order predicate logic, defined by the languages $\mathcal{L}_A$ and $\mathcal{L}_B$ and derivability relations $\vdash_{L_A}$ and $\vdash_{L_B}$ respectively. 
   A \textit{generalised translation} is given by a recursive map $(\cdot)^{\mathcal{G}}:\mathcal{L}_A\rightarrow \mathcal{L}_B$ which preserves free variables and a domain formula $\delta(x)\in\mathcal{L}_B$, such that the map  is compositional on the propositional connectives and where for all unnested formulas\footnote{An unnested formula is one where the atomic subformulas of a formula contain at most one constant, function or relation (\cite[58]{Hodges1993}).  We only give conditions for unnested formulas.  So, for example, $Sxy$ and $+(x,y,z)$ are unnested but $S0x$ and $+(0,0,z)$ are nested.  Every formula is equivalent to an unnested one (\cite[p.~59, Cor~2.6.2]{Hodges1993}).  As such the translation can be expanded to unnested formulas using this equivalence. } $\varphi_1,\dots,\varphi_n,\psi$ containing free variables $x_1,\dots,x_m$ one has the following: 
   \begin{equation}\label{equ:gentrans}
       \varphi_1,\dots,\varphi_n\vdash_{L_A}\psi\Rightarrow \delta(x_1),\dots,\delta(x_m),\varphi_1^{\mathcal{G}},\dots,\varphi_n^{\mathcal{G}}\vdash_{L_B}\psi^{\mathcal{G}}
   \end{equation}
\end{definition}

What we have done so far is an informal translation from the first-order language of arithmetic into the signature of the potentially infinite models. In Section~\ref{sec:lang} we showed how the atomic formulas could be translated.  Further, the modalized versions of the axioms of $\PAO$ proven in Sections~\ref{sec:q} and \ref{sec:iterp} are the translations of $\PAO$'s axioms via the translation found in Section~\ref{sec:lang} and the Linnebo translation for the quantifiers. 

While it has been set out in previous sections, for the sake of definiteness we here record the translation explicitly. We will call this translation $(\cdot)^\mathcal{F}$, as it is a Fregean translation.  Three things are worth noting before we lay out the translation.  The first is that the domain formula associated to this interpretation is $\nn$ from Definition~\ref{nn}.  The second is that the range of this translation is the inductive formulas from Definition~\ref{def:inductiveformulas}.  The third is that \ref{msuc}-\ref{mmultip} are the same definitions given in \ref{suc}, \ref{addit} and \ref{multip}.  We have not changed the definitions we are working with. Rather, we merely show how these definitions can be used to define the interpretation function $(\cdot)^\mathcal{F}$.  
\begin{align}
    0^\mathcal{F}\equiv & \#\varnothing, \\ 
    Sab^\mathcal{F}\equiv &\Diamond\exists G\exists u[Gu\wedge (b=\#G)\wedge(a=\#G\cup\set{u})],\label{msuc}\\
    +\!(a,b,c)^\mathcal{F}\equiv &\Diamond\exists X,Y(a{=}\#X\wedge b{=}\#Y\wedge c{=}\#X\cup Y\wedge X\cap Y=\varnothing),\\
%
    \begin{split}
    \quad \times\!(a,b,c)^\mathcal{F}\equiv&\Diamond\exists X,P[\#X=b\wedge \forall x\in X(\#\set{y\mid Pxy}=a)\wedge\\& \forall x,y\in X(x\neq y\rightarrow\set{z\mid Pxz}\cap\set{z\mid Pyz}=\varnothing)\wedge \#\bigcup_{x\in X}\set{y\mid Pxy}=c],
    \end{split}\label{mmultip}\\
%
    (\psi\wedge\chi)^{\mathcal{F}}\equiv&\psi^{\mathcal{F}}\wedge\chi^{\mathcal{F}},\\ 
    (\neg\psi)^{\mathcal{F}}\equiv&\neg\psi^{\mathcal{F}}, \\
    (\forall x\psi)^{\mathcal{F}}\equiv&\Box\forall x(\nn(x)\rightarrow\psi^{\mathcal{F}}),\\
    (\forall X^n\psi)^{\mathcal{F}}\equiv&\Box\forall X^n(\forall x_1,\dots, x_n(X^nx_1\dots x_n\rightarrow \nn(x_1)\wedge\dots \wedge \nn(x_n))\rightarrow\psi^{\mathcal{F}}). 
\end{align}
   
\noindent To see that this is a generalised translation all that remains to be shown is that deduction is preserved by our translation. We need this result for both $\EPI$ and $\PI$.\footnote{Recall that we formalised $\PI$ in $\ACA$, and those interested in the nuts and bolts are directed to Appendix~\ref{ap:piintoapa0}.}  

\begin{lem}\label{lem:Visser}
    Let $\varphi_0,\dots,\varphi_n,\psi$ be unnested formulas in the language of $\PAO$ with free variables $v_0,\dots,v_m$, it follows that if $ \varphi_0,\dots,\varphi_n\vdash\psi$,  then $\nn(v_0),\dots,\nn(v_m),\varphi_0^\mathcal{F},\dots,\varphi_n^\mathcal{F}\vDash_{\mathsf{PI}}\psi^\mathcal{F}$. Further, it is $\PAO$-provable that if $ \varphi_0,\dots,\varphi_n\vdash\psi$ then $\ACA \vdash \mbox{``}\nn(v_0),\dots,\nn(v_m),$ $\varphi_0^\mathcal{F},\dots,$ $\varphi_n^\mathcal{F}$ $\vDash_{\mathsf{PI}}$ $\psi^\mathcal{F}\mbox{''}$.
\end{lem}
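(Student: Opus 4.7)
The plan is to prove both parts simultaneously by induction on the length of a formal proof of $\psi$ from $\varphi_0,\dots,\varphi_n$ in $\PAO$. At each step, one verifies that the given axiom or inference rule, once composed with $(\cdot)^\mathcal{F}$, yields a $\vDash_{\mathsf{PI}}$-valid entailment under the standing hypothesis that $\nn$ holds of each free variable; for part (ii), one further verifies that the validity claim is provable in $\ACA$.

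The first-order logical axioms split into three families. Propositional axioms translate homomorphically because $(\cdot)^\mathcal{F}$ commutes with the connectives. Equality axioms go through because identity is rigid in the Kripke semantics of PI models and translated atomic arithmetic formulas are stable by Lemma~\ref{maire}. The quantifier axioms require the most care: universal instantiation $\forall x\,\varphi \to \varphi[t/x]$ translates to $\Box\forall x(\nn(x)\to\varphi^\mathcal{F})\to\varphi^\mathcal{F}[t^\mathcal{F}/x]$, which is sound precisely when $\nn(t^\mathcal{F})$ holds. Because the hypothesis formulas are unnested, $t$ is either a free variable $v_i$ (for which $\nn(v_i)$ appears in the hypotheses) or the constant $0$ (for which $\nn(\#\varnothing)$ holds by Lemma~\ref{Korn}), so the relativization is harmless. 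Generalization is handled by universally closing at each accessible world.

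For the non-logical axioms, the axioms of Robinson's $\Q$ translate to exactly the modalized versions already established in Lemmas~\ref{sucfunc}--\ref{stu}. The induction schema translates to the modalized induction schema for inductive formulas, whose validity is the content of Lemma~\ref{carl} together with the stability furnished by Lemma~\ref{maire}. The crucial observation here is that the range of $(\cdot)^\mathcal{F}$ lies within the inductive formulas (cf. Definition~\ref{def:inductiveformulas}), so every instance of the induction schema is covered. Modus ponens preservation is immediate, since truth at a world in a PI model is preserved by modus ponens.

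The main obstacle is the interplay between the quantifier translation and the modal actualist semantics: one must check that relativization to $\nn$ together with $\Box$ does not spoil instantiation, generalization, or identity reasoning. Lemma~\ref{lem:noexist}, which forces anything satisfying $\nn$ to actually exist, together with the stability of inductive formulas (Lemma~\ref{maire}), is what resolves this tension. For part (ii), the argument formalises in a routine way: the outer induction is carried out in $\PAO$ on the G\"odel number of a $\PAO$-proof, and each appeal to a lemma about $\vDash_{\mathsf{PI}}$ is available in $\ACA$ via the formalisation described in Appendix~\ref{ap:piintoapa0}. Since only the already-listed axioms and rules need to be handled, and each is $\ACA$-provably preserved, the whole argument sits comfortably inside $\PAO$.
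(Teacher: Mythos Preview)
The paper does not actually give a proof of this lemma: it cites \textcite[Thm.~5.4]{Linnebo2013} and remarks that the only change---relativising the quantifiers to the domain formula $\nn$---is straightforward. Your induction on the length of a derivation is the natural way to fill this in, and your handling of the quantifier axioms (using unnestedness to ensure the instantiated term is a variable or $0$, together with stability from Lemma~\ref{maire} and Lemma~\ref{lem:sstability} to push hypotheses to accessible worlds for generalisation) is the right idea.

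One point of misreading, however: the $\vdash$ in the lemma is derivability in pure first-order logic, not in $\PAO$. This is clear from the definition of generalised translation (Definition~\ref{def:verifiabletranslation}), where $\vdash_{L_A}$ refers to the underlying logic, and from how the lemma is used in Lemma~\ref{lem:paotoaca}, where the $\PAO$ axioms are explicitly placed among the hypotheses $\varphi_0,\dots,\varphi_n$. So your paragraph on the non-logical axioms (Robinson's $\Q$ and the induction schema) does not belong in the proof of \emph{this} lemma; that work is precisely what Lemma~\ref{lem:paotoaca} does afterwards. Your argument is not wrong---you would be establishing a stronger statement that still implies the lemma---but it conflates the division of labour between Lemma~\ref{lem:Visser} (preservation of logical consequence) and Lemma~\ref{lem:paotoaca} (validity of the translated axioms).
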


\noindent The first part of this Lemma is similar to \textcite[Thm.~5.4.]{Linnebo2013}.  But he proves a version of this which does not restrict the quantifiers to a domain.  The modification to our case is simple and so we omit the proof.

On its own a translation is not very interesting. However, a translation is an \emph{interpretation} if the translations of the axioms of the interpreted theory can be proven in the interpreting theory.  

\begin{definition}\label{def:verifiableinterpretation}
\noindent Let $\mathsf{T}_A$ and $\mathsf{T}_B$ be ${L}_A$ and ${L}_B$ theories respectively, where a theory is a set of sentences not necessarily closed under deduction.  A generalised translation $(\cdot)^{\mathcal{G}}:\lang_A\rightarrow\lang_B$ interprets $\mathsf{T}_A$ in $\mathsf{T}_B$, if for all $\mathcal{L}_A$ unnested sentences $\chi$: 
   \begin{equation}\label{equ:gentrans2}
       \mathsf{T}_A\vdash_{L_A}\chi\Rightarrow \mathsf{T}_B\vdash_{L_B}\chi^\mathcal{G}
   \end{equation}
   
\noindent It is a \textit{recursive interpretation} if the collection of $\lang_A$ and $\lang_B$ formulas are recursive, $\mathsf{T}_A$ and $\mathsf{T}_B$ are also recursive, as is $(\cdot)^\mathcal{G}$, and there are recursive maps from proofs to proofs which witness the truth of equations~(\ref{equ:gentrans}) and (\ref{equ:gentrans2}). If $\mathsf{T}$ extends $\mathsf{PA}^1$, then say that the interpretation is \emph{$\mathsf{T}$-verifiable} if the recursive functions are provably total in $\mathsf{T}$ and if the universal closures of the arithmetized versions of \ref{equ:gentrans} and \ref{equ:gentrans2} are provable in $\mathsf{T}$.
\end{definition}

\noindent So, the proofs of Sections~\ref{sec:q} and \ref{sec:iterp} show our translation is an interpretation of $\PAO$ in $\EPI$.  However, to show it is an interpretation in $\PI$ a certain level of caution is needed because $\PI $ does not have a background derivability relation.  To resolve this, we take $\varphi_0,\dots,\varphi_n\vdash_{L_{PI}}\varphi$ to be $\ACA\vdash``\varphi_0,\dots,\varphi_n\vDash_{\mathsf{PI}}\varphi"$, where this is as defined in Appendix~\ref{ap:piintoapa0}. And, of course $\PI $ is just as defined in (\ref{equ:tpi}) of section \ref{sec:intro}, namely the set of sentences $\varphi$ such that  $\ACA\vdash``\vDash_{\mathsf{PI}}\varphi"$.  We then need to show the following:

\begin{lem}\label{lem:paotoaca} For all sentences $\varphi$ in the language of $\PAO$,
if $\PAO\vdash\varphi$ then $\ACA\vdash``\vDash_{\mathsf{PI}}\varphi^{\mathcal{F}}"$. Further, it is $\PAO$-provable that if $\PAO\vdash\varphi$ then $\ACA\vdash``\vDash_{\mathsf{PI}}\varphi^{\mathcal{F}}"$.
\end{lem}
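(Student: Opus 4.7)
The plan is to reduce the lemma to two facts already in place: (a) the translation-preservation result of Lemma~\ref{lem:Visser}, and (b) the $\ACA$-formalizability of the axiom-by-axiom verifications carried out in Sections~\ref{sec:q} and~\ref{sec:iterp}. Concretely, I would first observe that every theorem of $\PAO$ is derivable from finitely many axiom instances drawn from $\Q$ and the induction schema. Hence it suffices to exhibit, for each axiom $\alpha$ of $\PAO$, an $\ACA$-proof of $``\vDash_{\mathsf{PI}}\alpha^{\mathcal{F}}$'', and then close under deduction using Lemma~\ref{lem:Visser}.

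For the $\Q$-axioms, I would go through Lemmas~\ref{sucfunc}--\ref{stu} one by one and check that each proof goes through in $\ACA$. The footnote attached to Section~\ref{sec:q} already asserts this, and indeed the arguments only manipulate finite domains, finite bijections, directedness, and the rigidity of $\#$---all coded via the $\ACA$-formalization of PI models laid out in Appendix~\ref{ap:piintoapa0}. For the induction schema, I would appeal to Lemma~\ref{carl}: since the range of $(\cdot)^{\mathcal{F}}$ lies among the inductive formulas (Definition~\ref{def:inductiveformulas}), and these are stable by Lemma~\ref{maire}, the translated induction axiom is valid on all PI models, and once more the argument is by induction on formula complexity, hence formalizable in $\ACA$. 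Combining these base cases with Lemma~\ref{lem:Visser}, we get the first assertion: whenever $\PAO\vdash\varphi$, a proof of $``\vDash_{\mathsf{PI}}\varphi^{\mathcal{F}}\mbox{''}$ exists in $\ACA$.

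For the ``further'' clause, I would upgrade the above to a $\PAO$-verifiable statement. The point is that the axiom-by-axiom verification supplies, uniformly and primitive recursively, an $\ACA$-proof of $``\vDash_{\mathsf{PI}}\alpha^{\mathcal{F}}\mbox{''}$ for each axiom $\alpha$; call this function $\pi_0$. Next, by the arithmetized version of Lemma~\ref{lem:Visser} (already part of its statement), there is a primitive recursive function $\pi_1$ that, given an $\ACA$-formalised witness of $\varphi_0,\dots,\varphi_n\vdash\psi$, produces a witness for $\varphi_0^{\mathcal{F}},\dots,\varphi_n^{\mathcal{F}}\vDash_{\mathsf{PI}}\psi^{\mathcal{F}}$. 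Composing $\pi_0$ and $\pi_1$ along the axiomatic skeleton of an arbitrary $\PAO$-proof yields a total recursive map from $\PAO$-proofs to $\ACA$-proofs, whose totality and correctness $\PAO$ verifies by an induction on proof length; the verification uses only $\Sigma^0_1$-completeness and basic manipulation of G\"odel numbers, both provable in $\PAO$.

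The main obstacle---more bookkeeping than obstruction---is checking that the proofs in Section~\ref{sec:q}--\ref{sec:iterp} genuinely go through in $\ACA$. The only nontrivial worry is the induction step in Lemma~\ref{carl}, which invokes the stability Lemma~\ref{maire}, itself proved by induction on formula complexity; because the inductive formulas are a primitive recursive class and stability is $\Pi^0_1$-expressible after formalization, arithmetical comprehension suffices. Everything else reduces to finite combinatorics on the first-order domains, which $\ACA$ handles via the coding in Appendix~\ref{ap:piintoapa0}.
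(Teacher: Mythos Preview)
Your proposal is correct and follows essentially the same route as the paper: verify each $\PAO$-axiom in $\ACA$ via the lemmas of Sections~\ref{sec:q}--\ref{sec:iterp} (in particular Lemmas~\ref{maire} and~\ref{carl} for induction), then close under deduction using Lemma~\ref{lem:Visser}. Your treatment of the ``further'' clause---exhibiting primitive recursive proof-transformations and arguing their $\PAO$-verifiability---is in fact more explicit than the paper's own proof, which leaves that part largely implicit.
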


\begin{proof}
    By Lemmas~\ref{phoibos}-\ref{multfunc2} and \ref{maire} and \ref{carl} we know that if $\varphi$ is an axiom of $\PAO$ then $\ACA\vdash``\vDash_{\mathsf{PI}}\varphi^{\mathcal{F}}"$.  Assume $\PAO\vdash\varphi$ not an axiom, then there are $n$ axioms of $\PAO$, $\varphi_0,\dots,\varphi_n$, such that $\varphi_0,\dots,\varphi_n\vdash\varphi$.  Then as we can always take the universal closure of axioms and $\varphi$ is a sentence it follows by Lemma~\ref{lem:Visser} that  $\ACA\vdash``\varphi_0^\mathcal{F},\dots,\varphi_n^\mathcal{F}\vDash_{\mathsf{PI}}\varphi^{\mathcal{F}}"$.  Given that the axioms are PI valid, it follows that $\ACA\vdash``\vDash_{\mathsf{PI}}\varphi^{\mathcal{F}}"$.
\end{proof}

\noindent This final piece gives us the proof of:

\begin{cthm}{\ref{thm:main}.ii.} { There is a generalised translation from the language of $\PAO$ to the second-order modal language with octothorpe that interprets $\PAO$ in $\PI$. Further, this is a $\PAO$-verifiable generalised interpretation.}
\end{cthm}

\noindent To prove the first half of Theorem~\ref{thm:main} we need to define formulas that pick out the numbers in  $\PAO$ and $\EPI$.  In $\PAO$ let $\tau_0(x)\equiv (x=0)$ and $\tau_{n+1}(x)\equiv \exists y(\tau_n(y)\wedge Syx)$.  In $\EPI$ let $\sigma_0(x)\equiv (x=0)$ and $\sigma_{n+1}(x)\equiv\Diamond\exists y\in\nn(\sigma_n(y)\wedge Syx)$.  Note that $(\tau_0(x))^\mathcal{F}\equiv(x=0)^\mathcal{F}\equiv\sigma_0(x)$ and $(\tau_{n+1}(x))^\mathcal{F}\equiv(\exists y(\tau_n(y)\wedge Syx))^\mathcal{F}\equiv\Diamond\exists y\in\nn((\tau_n(y))^\mathcal{F}\wedge Syx)\equiv\sigma_{n+1}(x)$.  With this we can state the following preliminary Lemma; we omit the proof which is long but not illuminating:

\begin{lem}\label{lem:nntopi}
For every $k\geq 0$ and every unnested formula $\theta(x_1, \dots, x_k)$ in the signature of $\PAO$ and every $k$-tuple of natural numbers $n_1, \dots, n_k$ one has that :
        \begin{equation}
            \nn\models \theta(n_1, \dots, n_k)\quad\Longrightarrow\quad\vDash_{\mathsf{PI}}\forall x_1, \dots, x_k\in\nn (\bigwedge_{i=1}^k \sigma_{n_i}(x_i) \rightarrow \theta^{\mathcal{F}}(x_1, \dots, x_k)) \label{lem:nntopi:1}
        \end{equation}
In the case of $k=0$, this is to say: for every unnested sentence $\theta$ in the signature of $\PAO$ one has that
        \begin{equation}
            \nn\models \theta\quad\Longrightarrow\quad\vDash_{\mathsf{PI}} \theta^{\mathcal{F}}\label{lem:nntopi:2}
        \end{equation}
\end{lem}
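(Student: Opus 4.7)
The plan is to prove a biconditional strengthening of the statement by induction on the structure of the unnested formula $\theta$, proving both directions simultaneously so that the negation clause goes through. Everything hinges on a preliminary claim: for every $n\geq 0$, every PI model $\mathcal{M}$, and every world $w$, (a) if $\mathbf{a}_n\in I(\nn,w)$ then $\sigma_n(\mathbf{a}_n)$ holds at $w$, and (b) if $x\in I(\nn,w)$ and $\sigma_n(x)$ holds at $w$ then $x=\mathbf{a}_n$. I would prove this by induction on $n$. The base $n=0$ is immediate from $\mathbf{a}_0=0=\#\varnothing$. For the step, part (a) reduces to checking $\Diamond\exists y\in\nn(\sigma_n(y)\wedge S(y,\mathbf{a}_{n+1}))$ at $w$; the witness is $y=\mathbf{a}_n$, which lies in $I(\nn,w)$ by Lemma~\ref{fact:nnatworld}, satisfies $\sigma_n$ by the inductive hypothesis, and is linked to $\mathbf{a}_{n+1}$ by $S$ via the set $\set{0,\mathbf{a}_1,\dots,\mathbf{a}_n}\subseteq D(w)$ (the construction used in the proof of Lemma~\ref{phoibos}). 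Part (b) follows from the inductive hypothesis applied inside the $\Diamond$-witnessing world together with functionality of $S$ on $\nn$ (Lemma~\ref{sucfunc}). Because each $\sigma_n$ is an inductive formula (by construction) and hence stable by Lemma~\ref{maire}, the claim transfers freely across accessible worlds.

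With this auxiliary claim in hand, the main induction on $\theta$ is organized as follows. For the atomic cases, one combines the auxiliary claim with the corresponding recursion-equation lemmas from Section~\ref{sec:q}: equality with the rigidity of $\#$, $S$ with Lemmas~\ref{sucfunc} and \ref{phoibos}, $+$ with Lemmas~\ref{addfunc} and \ref{vergial}, and $\times$ with Lemmas~\ref{multfunc1} and \ref{multfunc2}. Propositional connectives are routine in the biconditional formulation. For the universal clause $(\forall x\,\psi)^{\mathcal{F}}\equiv\Box\forall x(\nn(x)\rightarrow\psi^{\mathcal{F}})$, any $a\in I(\nn,w')$ at any accessible $w'$ equals some $\mathbf{a}_n$ by Lemma~\ref{fact:nnatworld}, so the auxiliary claim supplies $\sigma_n(a)$ at $w'$ and the inductive hypothesis at $n$ yields $\psi^{\mathcal{F}}(a,\dots)$. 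For the existential clause $(\exists x\,\psi)^{\mathcal{F}}\equiv\Diamond\exists x(\nn(x)\wedge\psi^{\mathcal{F}})$, given the $n$ furnished by truth of $\exists x\,\psi$ in $\nn$, I would use directedness together with the domain-growth condition~\ref{jo}.4 to pass to an accessible world containing $\mathbf{a}_n$, invoke the auxiliary claim to obtain $\sigma_n(\mathbf{a}_n)$, and apply the inductive hypothesis.

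The main obstacle will be the careful bookkeeping in the quantifier step: when we move to an accessible world to realize an existential, the antecedents $\bigwedge_i\sigma_{n_i}(x_i)$ of the statement being proved, as well as any $\theta^{\mathcal{F}}$-clauses already established, must continue to hold there. This is exactly what the stability of inductive formulas (Lemma~\ref{maire}) provides, since both the $\sigma_n$ and every formula in the range of $(\cdot)^{\mathcal{F}}$ are inductive. A secondary subtlety is that at worlds where some $\mathbf{a}_{n_i}\notin I(\nn,w)$ the lemma's conclusion is vacuously true; the biconditional strengthening is nevertheless recovered, because to refute the right-hand side from $\nn\not\models\theta$ it suffices to witness failure in a sufficiently late world of, e.g., the minimal model of Example~\ref{exmp:min}, where all of $\mathbf{a}_{n_1},\dots,\mathbf{a}_{n_k}$ lie in $I(\nn,w)$ and the auxiliary claim pins down each $x_i$ as $\mathbf{a}_{n_i}$.
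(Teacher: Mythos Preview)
The paper omits the proof (``long but not illuminating''), but the surrounding setup---defining $\tau_n$ and $\sigma_n$ and observing $(\tau_n)^{\mathcal{F}}=\sigma_n$---makes it clear that the intended argument is exactly the one you outline: an auxiliary lemma that $\sigma_n$ pins down $\mathbf{a}_n$ inside $I(\nn,w)$, followed by a structural induction on the unnested formula $\theta$ using a biconditional strengthening. Your auxiliary claim and your handling of the atomic and quantifier clauses are correct; in fact for part~(b) of the auxiliary claim you can argue even more directly from the semantics of $\#$ (if $S(\mathbf{a}_n,x)$ then the witnessing $G$ has $|G|=n+1$, so $x=\#G=\mathbf{a}_{n+1}$), without invoking Lemma~\ref{sucfunc}. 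Likewise the atomic cases for $S,+,\times$ follow straight from condition~\ref{jo}.5 rather than from the recursion lemmas of Section~\ref{sec:q}.

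One point deserves tightening. Your final paragraph speaks of ``refuting the right-hand side from $\nn\not\models\theta$'' by exhibiting failure in a single world of the minimal model. That is the contrapositive of a $\vDash_{\mathsf{PI}}$-level biconditional, and that formulation is \emph{not} strong enough to push the negation clause through: from $\nn\not\models\theta(\bar n)$ you need $\mathcal{M},w\not\models\theta^{\mathcal{F}}(\mathbf{a}_{\bar n})$ at \emph{every} $(\mathcal{M},w)$ with $\mathbf{a}_{n_i}\in I(\nn,w)$, not merely at one. The induction hypothesis must therefore be stated pointwise: for all PI models $\mathcal{M}$ and worlds $w$ with $\mathbf{a}_{n_1},\dots,\mathbf{a}_{n_k}\in I(\nn,w)$, one has $\nn\models\theta(n_1,\dots,n_k)$ iff $\mathcal{M},w\models\theta^{\mathcal{F}}(\mathbf{a}_{n_1},\dots,\mathbf{a}_{n_k})$. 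Your treatment of the quantifier clauses already operates at this level, so the fix is purely a matter of stating the hypothesis correctly; once you do, negation is immediate and the vacuity worry disappears, since at worlds where some $\mathbf{a}_{n_i}\notin I(\nn,w)$ the pointwise biconditional is simply not asserted.
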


\noindent Theorem~\ref{thm:main}.i follows from (\ref{lem:nntopi:2}) of Lemma~\ref{lem:nntopi}.  This give us our proof of:

\begin{cthm}{\ref{thm:main}.i.} 
{ There is a generalised translation from the language of $\PAO$ to the second-order modal language with octothorpe that interprets $\TAO$ in $\EPI$.}
\end{cthm}

\section{Proof of Theorem~\ref{thm:main2}}\label{sec:prmain2}

It has been shown by \textcite[\S 7]{Linnebo2019-ll} that the Linnebo translation cannot interpret comprehension because modalized comprehension requires the existences of a set of all possibly existing things. However, this leaves open the question of whether there is a different translation which can interpret $\PAT$.  Here we will demonstrate that there is no translation from $\TAT$ to $\EPI$ nor from $\PAT$ to $\PI$ by proving Theorem~\ref{thm:main2}, our second main theorem.  The first part of Theorem~\ref{thm:main2} follows from relatively simple Tarskian considerations:

\begin{cthm}{\ref{thm:main2}.i}  {
There is no generalised translation from the language of $\PAT$ to the second-order modal language with octothorpe that interprets $\TAT$ in $\EPI$.}
\end{cthm}

\begin{proof}
    Assume for a contradiction that there is an interpretation $(\cdot)^\mathcal{G}$ that interprets $\TAT$ in $\EPI$. Note that as $\TAT$ is complete it follows that this is a faithful interpretation; i.e.\@ if $\vDash_{\mathsf{PI}}\varphi^\mathcal{G}$ then $\nn\vDash\varphi$.  As  $\EPI$ is $\Pi^1_1$-definable it follows that there is a predicate $P$ such that for all $\varphi$ in the second-order modal language with octothorpe we have $\vDash_{\mathsf{PI}}\varphi$ if and only if $\nn\vDash P(``\varphi")$.  (Here we use quotation marks for G\"odel numbering for both the language of $\PAT$ and the second-order modal language with octothorpe.) But then as generalised translations are recursive we can represent $(\cdot)^\mathcal{G}$ in $\nn$ as $g$. It follows that $P(g(``\psi"))$, where $\psi$ is in the language of $\PAT$, is a truth predicate for $\TAT$. But this contradicts Tarski's theorem. 
\end{proof}

The proof of the second part of the theorem is trickier and requires G\"odelian considerations. Recall the definition of $\mathsf{T}$-verifiable generalised translation and interpretation from Definitions~\ref{def:verifiabletranslation} and \ref{def:verifiableinterpretation} in Section~\ref{sec:interpertation}.  There we proved that we have a $\PAO$-verifiable interpretation of $\PAO$ in $\PI$ by Lemma~\ref{lem:paotoaca}. Given that we defined $\PI \vdash\varphi$ as $\ACA\vdash\text{``$\vDash_{\mathsf{PI}}\varphi$"}$, that is $\PAO\vdash\forall\varphi[``\PAO\vdash\varphi"\rightarrow``\ACA\vdash\text{``}\vDash_{\mathsf{PI}}\varphi^\mathcal{F}{"}"]$.    Here we show that there is no $\PAT$-verifiable interpretation of $\PAT$ in $\PI$.  We can write this as: there is no generalised translation $(\cdot)^\mathcal{G}$ from the language of $\PAT$ to the second-order modal language with octothorpe such that $\PAT\vdash\forall\varphi[``\PAT\vdash\varphi"\rightarrow``\ACA\vdash\text{``}\vDash_{\mathsf{PI}}\varphi^\mathcal{G}""$].

\begin{cthm}{\ref{thm:main2}.ii}  { There is no  generalised translation from the language of $\PAT$ to the second-order modal language with octothorpe that $\PAT$-verifiably interprets $\PAT$ in $\PI$.}
\end{cthm}

\begin{proof}
    The systems $\PICAk{k}$ are subsystems of $\PAT$ that have comprehension for $\Pi^1_k$ formulas. As proofs are finite and so can only use finitely many instances of the comprehension schema any interpretation which is $\PAT$-verifiable will also be $\PICAk{k}$-verifiable for some $k\geq 1$.  Let $\varphi_1,\dots,\varphi_n$ be a finite axiomatisation of $\PICAk{k}$ for some $k\geq1$ (\cite[pp.~303, 311-2]{Simpson2009}).    We will show, from the assumption that there is a $\PICAk{k}$-verifiable translation $(\cdot)^\mathcal{G}$ from the languge of $\PAT$ to the second-order modal language with octothorpe that interprets $\PAT$ in $\PI$, that $\PICAk{k}$ proves its own consistency. This contradicts G\"odel's second incompleteness theorem and so shows that no such $(\cdot)^\mathcal{G}$ can exist.
    
    Note that $\PAT\vdash \varphi_1,\dots,\varphi_n$ as all $\PICAk{k}$ are subsystems of $\PAT$.  We are assuming that $(\cdot)^\mathcal{G}$ interprets $\PAT$ in $\PI$, so it follows that $\ACA\vdash``\vDash_{\mathsf{PI}}\varphi_1^\mathcal{G},\dots,\varphi_n^\mathcal{G}"$. Let $\mathcal{A}$ be a model of $\PICAk{k}$ for some $k$.  So, we have $\mathcal{A}\vDash``\vDash_{\mathsf{PI}}\varphi_1^\mathcal{G},\dots,\varphi_n^\mathcal{G}"$.    If $\mathcal{M}$ is the minimal model from Example~\ref{exmp:min} relative to $\mathcal{A}$ then we have then we have $\mathcal{A}\vDash``\mathcal{M}\vDash\varphi_1^\mathcal{G},\dots,\varphi_n^\mathcal{G}"$. 
    
    Now we show that $\mathcal{A}\vDash\neg Prv_{\varphi_1,\dots,\varphi_n}(\psi\wedge\neg\psi)$, that is the consistency of $\PICAk{k}$.  Assume for a contradiction that  $\mathcal{A}\vDash \exists\pi Prf_{\varphi_1,\dots,\varphi_n}(\pi,\psi\wedge\neg\psi)$.  Then as $(\cdot)^\mathcal{G}$ is a $\PICAk{k}$-verifiable interpretation it follows $\mathcal{A}\vDash Prf_{\ACA}(\pi^\mathcal{G},``\varphi_1^\mathcal{G},\dots,\varphi_n^\mathcal{G}\vDash_{\mathsf{PI}}\psi^\mathcal{G}\wedge\neg\psi^\mathcal{G}{}'')$.  
    
    Recall that $\PICA$ proves $\Sigma^1_1$-reflection for $\ACA$ (cf. \textcite{Simpson2009} Theorem~VII.6.9.(4) p.~298 and Theorem~VII.7.6.(1)~p.~305). As $\PICA\subseteq\PICAk{k}$, this means that for any $\Pi^1_1$ statement $\psi$ we know $\PICAk{k}$ proves $Prv_{\ACA}(\psi)\rightarrow \psi$. For all $\psi$, we know that $``\vDash_{\mathsf{PI}}\psi"$ is $\Pi^1_1$ and similarly for the local derivability relation (see Appendix~\ref{ap:piintoapa0}).  It follows that $\mathcal{A}\vDash``\varphi_1^\mathcal{G},\dots,\varphi_n^\mathcal{G}\vDash_{\mathsf{PI}}\psi^\mathcal{G}\wedge\neg\psi^\mathcal{G}{"}$ and as $\mathcal{A}\vDash``\mathcal{M}\vDash\varphi_1^\mathcal{G},\dots,\varphi_n^\mathcal{G}{"}$.  It follows that  $\mathcal{A}\vDash``\mathcal{M}\vDash\psi^\mathcal{G}\wedge\neg\psi^\mathcal{G}{"}$.  And so $\mathcal{A}\vDash``\mathcal{M}\models \psi^G"$ and $\mathcal{A}\vDash``\mathcal{M}\models \neg (\psi^G)"$.  
\end{proof}

\noindent  We have now shown the two main results set out in the introduction.
\section{Conclusion}

We started with the worry that Hume's Principle had only infinite models and so any claim that it was analytic would mean that the claim that there are infinitely many objects is analytic.  This worry has been noted before in the literature on neo-logicism, but little has been done to address it.  \textcite{Hale2001} state that without this the neo-logicist project cannot even get off the ground:

\begin{quote}
    To require of an acceptable abstraction that it should not be (even) weakly inflationary [that is require a countable infinity] would stop the neo-Fregean project dead in its tracks, before it even got moving (as it were). It will be clear that I think there is no good ground to impose such a requirement, and I shall not discuss it further.
    \unskip\hspace*{1em plus 1fill}%
   \nolinebreak[3]\hspace*{\fill}\mbox{(\cite[417--8]{Hale2001})}
\end{quote}

\noindent In this paper we have explored the potentially infinite as one way to address this worry.  The move to the potentially infinite does not rid us of posited infinities. We still require there to be an infinity of worlds and an infinity of objects across the worlds.  But these infinities are less metaphysically questionable.    So, for example, while Putnam and Hodes objected to the positing of actual infinities they allowed for possible infinities.  And one could always try to further avoid the commitment by adopting an instrumentalist attitude towards the metatheory.

We have shown that the theory of potentially infinite models interprets first-order Peano arithmetic or first-order true arithmetic, depending on the strength of our meta-language.  But we cannot interpret the equivalent second-order arithmetic theory.  The difficulty seems to be the non-existence of a set of all the numbers across all the worlds. As our models are supposed to capture the idea of the potential infinite, we do not want the set of all the numbers across all the worlds to exist.   It makes sense that the potential infinite does not capture the infinite progression of the natural numbers as well as actual infinity and this might go some way to explaining why we get the weaker first-order theory.  

This allows a fuller understanding of the role of the potentially infinite in the foundation of mathematics.  Unlike Hodes, we see that a certain amount of mathematics can be recovered, though some other story would need to be told about more advanced mathematics. It also offers evidence that the ontological commitments that come with Hume's Principle, and which make some reject the claim that its truth is analytic, cannot be avoided by moving to the modal setting if one wants full second-order Peano arithmetic.  For in weakening our ontological commitments, we also weakened the mathematical theory which we can recover.

\begin{appendices}
\section{Formal Theories}\label{ap:theories}

Here we will spell out the theories other than $\EPI$ and $\PI$ which are used in the proofs above.  Unlike $\EPI$ and $\PI$ none of these are modal theories, however, most are second-order theories.

The weakest theory we consider is first-order Robinson's $\Q$.  For a more complete reference see, for example, \textcite[28]{Hajek1998}.

\begin{definition}
$\Q$ is the usual formalization of Robinson's arithmetic. It consists of the universal closure of the following axioms:
\begin{align}
    &s(x)\neq 0; && \text{(Q1)} &
    &s(y)=s(z)\rightarrow y=z\label{ax:q2}\tag{Q2};\\
    &x+0=x; && \text{(Q3)} &
    &x+s(y)=s(x+y)\label{ax:q4}\tag{Q4};\\
    &x\times0=0; && \text{(Q5)} &
    &x\times s(y)=(x\times y)+y.\label{ax:q6}\tag{Q6}
\end{align}
\end{definition}

\noindent Note that in the body of the text we do not use this formulation but rather one with relations instead than functions.\footnote{We use a capital $S$ for the relational successor and lower case $s$ for the functional.} We have offered this formulation for readability. The relation formulation gives you the obvious translation of the above, plus an additional 6 axioms ensuring that the relations $S,+,\times$ are the graphs of functions.

We also consider the extensions of $\Q$ to $\PAO$ by the addition of the first-order induction schema, and $\PAT$ by the addition of the second-order induction axiom and Comprehension Schema.  $\PAO$ is a first-order theory, but $\PAT$ is a second-order theory.

\begin{definition}
$\PAO$ is $\Q$ plus the induction schema, where $\varphi$ is a first-order formula:
\begin{equation}
    (\varphi(0) \wedge\forall x(\varphi x\rightarrow \varphi (s(x))))\rightarrow\forall x \varphi (x)\label{ax:is}\tag{Induction Schema (IS)}
\end{equation}

\noindent $\PAT$ is $\Q$ plus the induction axiom and Comprehension Schema:
\begin{equation}
    \forall P[(P0 \wedge\forall x(Px\rightarrow P(s(x))))\rightarrow\forall x Px]\tag{Induction Axiom (IS)}
\end{equation}
\begin{equation}
    \forall\Bar{y},\Bar{Y}\exists X\forall x(X(x)\leftrightarrow \varphi(x,\Bar{y},\Bar{Y}))\tag{Comprehension Schema (CS)}
\end{equation}

\noindent In the Comprehension Schema $\varphi$ can be any formula of the language of $\PAT$ in which $X$ does not occur free.
\end{definition}
\noindent Again in the body of the text we use the natural adaptation to the setting of relations rather than functions.  There are also two theories we use that are second-order and between $\PAT$ and $\PAO$ in strength. They both restrict comprehension. So, we first need to define the formulas we restrict to:

\begin{definition}(\cite[I.3.1, p.~6]{Simpson2009}) An {\it Arithmetical formula} is a formula in the language of $\PAT$ which does not contain any set quantifiers, though it may contain free set variables.
\end{definition}

\noindent With this we can state $\ACA$:

\begin{definition}(\cite[I.3.2, p.~7]{Simpson2009}) $\ACA$ is $\Q$ plus the Induction Axiom and Arithmetical Comprehension:
\begin{equation}
    \forall\Bar{y},\Bar{Y}\exists X\forall x(X(x)\leftrightarrow \varphi(x,\Bar{y},\Bar{Y}))\label{ax:acs}\tag{Arithmetical Comprehension Schema (ACS)}
\end{equation}

\noindent Where $\varphi$ has to be an arithmetical formula and $X$ may not occur free.
\end{definition}

\noindent Note that as every formula of $\PAO$ is arithmetical, and $\ACA$ contains the second-order induction axiom, every instance of the first-order induction schema is provable in $\ACA$.

The next theories of arithmetic to be considered here are the $\PICAk{k}$ which are used in the proof of Theorem~\ref{thm:main2}.  To define this theory, we first need to define $\Pi_k^1$ (and $\Sigma^1_k$) formulas:

\begin{definition}(\cite[I.5.1, p.~16]{Simpson2009})
A {\it $\Pi^1_1$ formula} is a formula in the language of $\PAT$ of the form $\forall X_1,\dots,X_n\varphi$ where $X_1,\dots,X_n$ are set variables and $\varphi$ is an arithmetical formula.

\noindent A {\it $\Sigma^1_1$ formula} is a formula in the language of $\PAT$ of the form $\exists X_1,\dots,X_n\varphi$ where $X_1,\dots,X_n$ are set variables and $\varphi$ is an arithmetical formula.

\noindent A {\it $\Pi^1_k$ formula} is a formula in the language of $\PAT$ of the form $\forall X_1,\dots,X_n\varphi$ where $X_1,\dots,X_n$ are set variables and $\varphi$ is a $\Sigma^1_{k-1}$ formula.

\noindent A {\it $\Sigma^1_k$ formula} is a formula in the language of $\PAT$ of the form $\exists X_1,\dots,X_n\varphi$ where $X_1,\dots,X_n$ are set variables and $\varphi$ is $\Pi^1_{k-1}$ formula.
\end{definition}

\noindent The definition of $\PICAk{k}$ is much like the definition of $\ACA$, except that the restriction on the comprehension axiom is broadened to include all $\Pi_k^1$ formulas:

\begin{definition}(\cite[I.5.2, p.~17]{Simpson2009})
$\PICAk{k}$ is $\Q$ plus the Induction Axiom and $\Pi^1_k$ Comprehension:
\begin{equation}
    \forall\Bar{y},\Bar{Y}\exists X\forall x(X(x)\leftrightarrow \varphi(x,\Bar{y},\Bar{Y}))\label{ax:pics}\tag{$\Pi^1_k$ Comprehension Schema ($\Pi^1_k$CS)}
\end{equation}

\noindent Where $\varphi$ has to be a $\Pi_k^1$ formula and $X$ may not occur free.
\end{definition}

We can define the intended model of these theories.  Let $\nn^1$ be $\set{\omega,0,s,+,\times}$ where each term is interpreted as it is in the metatheory and $\nn^2$ be $\nn^1$ with $\powerset(\omega^n)$ as the domain of the second-order quantifiers.  $\nn^1$ is the intended model of $\Q$ and $\PAO$, while $\nn^2$ is the intended model of $\PAT$, $\ACA$, and $\PICAk{k}$ for all $k$.  As is well known, by G\"odel's incompleteness theorems none of the theories we have seen so far are complete.  We can define the complete theories of these models:

\begin{definition}
Let $\TAO$ be $\set{\varphi\mid \nn^1\vDash\varphi}$ and $\TAT$ be $\set{\varphi\mid \nn^2\vDash\varphi}$.
\end{definition}

For the sake of completeness, we here define Hume's Principle ($\HP$).  This system is second-order also and consists of the cardinality principle displayed in Equation~\ref{hp} on page~\pageref{hp}, the full Comprehension Schema, as in $\PAT$, and full comprehension for binary relations:
\begin{equation}
    \forall\Bar{y},\Bar{Y}\exists X\forall x,z(X(x,z)\leftrightarrow \varphi(x,z,\Bar{y},\Bar{Y}))\label{ax:bcs}\tag{Binary Comprehension Schema (BCS)}    
\end{equation}

\noindent Comprehension for binary relations is required because the definition of $\HP$ quantifies over bijections and when spelt out fully this turns out to be the claim that there is a second-order binary relation which is the graph of a bijection between the two sets.

\section{Formal definition of $\PI$}\label{ap:piintoapa0}

In the introduction we gave $\PI$ as the set $ \set{\varphi\mid \ACA\vdash\text{`$\vDash_{\mathsf{PI}}\varphi$'}}$. Here we will layout explicitly what we mean by defining the arithmetization of $\vDash_{\mathsf{PI}}$ in $\ACA$.  

It is importaint to note that the second-order variables in $\PI$ are taken to first-order variables in $\ACA$.  If all the first-order variables of $\PI$ are of the form $x_i$ and all the second-order variables of $\PI$ are of the form $Y_j$ then let all the first-order variables of $\ACA$ be of the form $x_i$ and $Y_j$, and the second-order variables of $\ACA$ be of the form $Z_v$.  In practice we will not stick to this strict distinction, but it can always be implemented by renaming the variables.

We do not restrict the domain of the first-order variables of $\PI$; there is no need to pick out a subset of the domain of a model of $\ACA$.    However, the second-order variables of $\PI$ need to be restricted to codes for finite sets of numbers ordered by strict less than.  This isn't difficult, we can simply borrow the coding found in the proof of incompleteness. A more complete explication can be found in \textcite[Ch. 2.2]{Simpson2009}. The second-order variables are required to be to some sequence $\pi(0)^{n_0}+\dots+\pi(m)^{n_m}$ where $\pi(i)$ gives the $i$th prime and $n_0<n_1<\dots<n_m$.  Let $Seq(Y)$ be the name of the relation that ensures $Y$ has the above properties.  Further, let $nSeq(Y)$ mean that $Y$ codes $n$-tuples of numbers.  We will use this to code relations and relational variables. If $x$ is the number of a sequence then let $[x]_i$ be the $i$th element and $ln(x)$ is the length of $x$.

We want to code PI models as sets of natural numbers.  We know that we can always combine countably many countably infinite sets (just code $n$ a member of the $i$th set as $2^i+3^n$).  As such we will just show how to code $W,R,D,\#,\mathbf{a}$ as separate sets of natural numbers.  Further, with $R,D,\#,\mathbf{a}$ we will talk about pairs $(x,y)$, this should be understood as standing for the code $2^x+3^y$.

\begin{enumerate}[label={(B.\arabic*)}]
    \item \label{con:1}Let $W$ be infinite ($\forall x\in W\exists y\in W (y>x)$),\footnote{Recall that our definition demanded that our set of worlds be countable. We cannot capture this in $\ACA$ in the sense that $\ACA$ has none standard models but we will have that we do not have more worlds than $\ACA$ thinks there are natural numbers, which is sufficent for the role this plays in the proofs.}
    \item\label{con:2} let $R$ be such that \begin{enumerate}
        \item for all $(i,j)\in R$ we have that $i,j\in W$,
        \item $\forall x\in W \ R(x,x)$ (reflexive),
        \item $\forall x,y,z\in W (R(x,y)\wedge R(y,z)\rightarrow R(x,z))$ (transitive),
        \item $\forall x,y\in W(R(x,y)\wedge R(y,x)\rightarrow x=y)$ (anti-symmetric),
        \item $\forall x,y\in W \ \exists z\in W(R(x,z)\wedge R(y,z))$ (directed),
    \end{enumerate} 
    \item let $D$ be such that 
    \begin{enumerate}
        \item $D(w,Y)$ implies that $w\in W$ and $Seq(Y)$, 
        \item $\forall w\in W\exists Y\in Seq (D(w,Y)\wedge ln(Y)>0)$ (every world has at least one element),
        \item $D$ is the graph of a function from $W$ to $Seq$,
        \item if $R(i,j)$ and $i\neq j$ and $D(i,X)$ and $D(j,Y)$ then $\exists u\forall v ([X]_v\neq[Y]_u)$ (there is something in $Y$ not in $X$) and $\forall v<ln(X)\exists u([X]_v=[Y]_u)$ (everything in $X$ is in $Y$),
    \end{enumerate}  
    \item \label{con:4}let $\mathbf{a}$ be such that for each $n$ there is exactly one $x$ such that $\mathbf{a}(n,x)$ and if $\mathbf{a}(n,x)$ and $\mathbf{a}(m,x)$ then $n=m$, we then define $\#(Y,x)$ as $Seq(Y)\wedge\mathbf{a}({ln(Y)},x)$.
\end{enumerate}   

Given a set of numbers $\mathcal{M}$ we will write $\mathcal{M}\in PIM$ to signify the set meets \ref{con:1}--\ref{con:4}. We define $sb$ (subset) as follows $Y\in sb(X)$ iff $Seq(Y)\wedge\forall i<ln(Y)\exists j ([X]_j=[Y]_i)$. In defining the arithmetisation note that we add free-variables for the model and the world, we will use $W_M,R_M,D_M,\#_M$, but these can be defined in terms of the model.  So, if $\varphi$ is a formula in the modal second-order language with octothorpe we translate it to some $\psi(w,W_M,R_M,D_M,\#_M)$ in the language of arithmetic. We define the arithmetisation as follows:
\begin{align}
    (x_i=x_j)^*\equiv& x_i=x_j\\
    \quad \ \ \ (x_i=\#Y_j)^*\equiv& \#_M(Y_j,x_i)\\
    (Y_jx_i)^*\equiv& \exists u(x_i=[Y_j]_u)\\
    (\forall x\varphi)^*\equiv& \forall x (\exists Y\in Seq(D_M(w,Y)\wedge \exists u (x=[Y]_u))\rightarrow (\varphi)^*)\\
    (\forall Y\varphi)^*\equiv& \forall Y\in Seq(\exists X\in Seq(D_M(w,X)\wedge Y\in sb(X))\rightarrow (\varphi)^*)\\
    (\forall P^n\varphi)^*\equiv& \forall P^n\in nSeq\\&\hfill(\exists X\in Seq(D_M(w,X)\wedge \forall(x_1,\dots,x_n)\in P^n (\bigwedge_{1\leq i\leq n}\exists j [X]_j=x_i))\rightarrow (\varphi)^*)\notag\\
    (\Box\varphi)^*\equiv& \forall s\in W_M(R_M(w,s)\rightarrow (\varphi)^*[w/s])
\end{align}

\noindent where we commute over the logical connectives.  This means that every formula arithmetised is arithmetical as defined in Appendix~\ref{ap:theories}. For example, $\Box\forall v\Diamond\exists Z(v=\#Z)$ becomes 
\begin{multline}
    \forall s\in W_M(R_M(w,s)\rightarrow 
    \forall v (\exists Y(D_M(s,Y)\wedge \exists u (v=[Y]_u)\rightarrow \qquad\qquad\qquad\qquad \\
    \exists s'\in W_M(R_M(s,s')\wedge
    \exists Z\in Seq(\exists X(D_M(w,X)\wedge Z\in sb(X)\wedge
    \#_M(Z,v))))).
\end{multline}

Note $`\vDash_{\mathsf{PI}}\varphi$' means $\forall M\in PIM\forall w\in W_M(\varphi)^*$.  It follows that this is then a $\Pi^1_1$ formula.  Hence, if one were proceeding very formally, we would define $\PI$ as the set of all the $\varphi$ such that $\ACA\vdash\forall M\in PIM\forall w\in W_M(\varphi)^*$.

\end{appendices}

\printbibliography

\end{document}